\DeclareMathOperator{\Div}{div}
\let\d\relax
\DeclareMathOperator{\d}{d\!}
\theoremstyle{plain}
\newtheorem{theorem}{Theorem}[section]
\newtheorem{lemma}[theorem]{Lemma}
\newtheorem{proposition}[theorem]{Proposition}
\newtheorem{corollary}[theorem]{Corollary}
\newtheorem{assumption}[theorem]{Assumption}
\theoremstyle{definition}
\newtheorem{remark}[theorem]{Remark}
\crefname{lemma}{Lemma}{Lemmata}
\crefname{theorem}{Theorem}{Theorems}
\crefname{proposition}{Proposition}{Propositions}
\crefname{corollary}{Corollary}{Corollarys}
\crefname{algorithm}{Algorithm}{Algorithms}
\crefname{assumption}{Assumption}{Assumptions}
\crefname{definition}{Definition}{Definitions}
\crefname{remark}{Remark}{Remarks}
\crefname{proof}{proof}{proofs}
\crefname{equation}{}{}
\title{
 \bf Analysis of the SQP Method for Hyperbolic PDE-Constrained Optimization in Acoustic Full Waveform Inversion\footnote{This work is supported by the DFG research grants YO159/4-1 and YO159/5-1.}
 }
\author{
 Luis Ammann\footnote{University of Duisburg-Essen, Fakult\"at f\"ur Mathematik, Thea-Leymann-Str. 9, D-45127 Essen, Germany. 
 \newline Email: luis.ammann@uni-due.de, irwin.yousept@uni-due.de}
 \and
 Irwin Yousept $^\dag$\footnote{Corresponding author}
 }
 \def\keywords{
 {\footnotesize Keywords: SQP methods, full waveform inversion, hyperbolic PDE-constrained optimization, well-posedness, R-superlinear convergence}
 }
\begin{document}
\date{}
\maketitle
\allowdisplaybreaks

\begin{abstract}
	In this paper, the SQP method applied to a hyperbolic PDE-constrained optimization problem is considered. The model arises from the acoustic full waveform inversion in the time domain. The analysis is mainly challenging due to the involved hyperbolicity and second-order bilinear structure. This notorious character leads to an undesired effect of loss of regularity in the SQP method, calling for a substantial extension of developed parabolic techniques. We propose and analyze a novel strategy for the well-posedness and convergence analysis based on the use of a smooth-in-time initial condition, a tailored self-mapping operator, and a two-step estimation process along with Stampacchia's method for second-order wave equations. Our final theoretical result is the R-superlinear convergence of the SQP method. 
\end{abstract}

\keywords

\section{Introduction}
\label{section:introduction}

This paper analyzes the Sequential Quadratic Programming (SQP) method for a class of hyperbolic optimal control problems with applications in acoustic full waveform inversion. To describe the model problem, let $\Omega \subset \mathbb R^N$ ($N\geq 2$) be a bounded domain with a Lipschitz boundary $\partial \Omega$ and $I \coloneqq [0, T]$ be a finite time interval. The boundary of $\Omega$ is is given by $\partial\Omega = \Gamma_D\cup\Gamma_N$ with a closed subset $\Gamma_D\subsetneq\partial\Omega$ satisfying $|\Gamma_D| \neq 0$ and $\Gamma_N = \partial\Omega\setminus\Gamma_D$. Introducing the square slowness in $\Omega$ by $\nu\colon \Omega \to \mathbb R$, the propagation of the acoustic pressure in $\Omega$ can be mathematically described by the solution $p\colon I\times\Omega\to\mathbb R$ to the following damped acoustic wave equation:
\begin{equation}
	\label{system: state}
	\left\{\begin{aligned}
		&\nu\partial_{tt}p - \Delta p + \eta \partial_t p = f 
		&&\text{in }I\times \Omega
		\\
		&\partial_n p = 0 
		&&\text{in }I\times\Gamma_N
		\\
		&p = 0 
		&&\text{in }I\times\Gamma_D
		\\
		&(p,\partial_t p)(0) = (0,0) 
		&&\text{in }\Omega.
	\end{aligned}\right.
\end{equation}
Here, $\eta\colon\Omega\to\mathbb R$ is a given damping coefficient, and $f\colon\Omega\to\mathbb R$ is a given source term. Full waveform inversion (FWI) is a famous method for reconstructing the square slowness $\nu$. A suitable FWI formulation is given by the PDE-constrained optimization problem
\begin{equation} 
	\label{P}\tag{P}
	\left\{\begin{aligned}
		& \inf \mathcal{J}(\nu, p) \coloneqq\frac{1}{2} \sum_{i=1}^m \int_{0}^{T}\int_{\Omega}a_i(p-p^{ob}_i)^2\d x\d t+\frac{\lambda}{2}\|\nu\|_{L^2(\Omega)}^2
		\\
		&\text{ s.t. } \cref{system: state} \text{ and } \nu\in\mathcal V_{ad}\coloneqq\{\nu\in L^2(\Omega): \nu_- (x)\leq \nu(x)\leq \nu_+(x)\text{ for a.e. }x\in\Omega\}
	\end{aligned}\right.
\end{equation}
for some given observation data $p^{ob}_{i}\colon I\times\Omega\to\mathbb R$ recorded at receivers modeled through the weight functions $a_i\colon I\times\Omega\to\mathbb R$. Moreover, the constant $\lambda >0$ denotes the Tikhonov regularization parameter, and $\nu_-\colon\Omega\to\mathbb R$ (resp. $\nu_+\colon\Omega\to\mathbb R$) denotes the lower (resp. upper) bound for the square slowness $\nu$. For a more detailed discussion on the appearing quantities, particularly their physical explanation, we refer to our previous work \cite{ammann23}. For an extensive overview of FWI, we refer to Fichtner \cite{fichtner11}, Virieux and Operto \cite{operto09}, and the references therin.

The SQP method is a celebrated technique in finite and infinite dimensional optimization, particularly in the context of optimal control problems. We refer to the earlier contributions by Alt \cite{alt90,alt98} and Alt, Sontag, and Tr\"oltzsch \cite{alt96} for SQP methods of optimization problems with ODE or integral equations constraints. From among many other related works in the context of PDE-constrained problems, we mention the contributions by Ito and Kunisch \cite{ito96,ito96'}, Tr\"oltzsch et al. \cite{goldberg98,heinkenschloss99,troeltzsch99,troeltzsch00,troeltzsch01}, Heinkenschloss \cite{heinkenschloss98}, Hinterm\"uller and Hinze \cite{hintermueller02,hintermueller06}, Volkwein \cite{volkwein03}, Wachsmuth \cite{wachsmuth07}, Griesse et al. \cite{griesse08,griesse10}, Hinze and Kunisch \cite{hinze01}, and Hoppe and Neitzel \cite{hoppe21}. Even though the investigations of SQP methods are highly problem-specified, they mainly follow the same methodology: Reformulation of the SQP method as a generalized Newton method and exploitation of Robinson's concept of \textit{strong regularity} \cite{robinson80}. This unified ansatz leads to well-posedness and quadratic convergence of SQP iterations. Eventually, one verifies the strong regularity condition using suitable second-order sufficient optimality conditions.

Notice that the works mentioned above only focus on elliptic and parabolic PDEs. Our paper seems to be the first contribution toward analyzing SQP methods in hyperbolic PDE-constrained optimization. For our model problem \cref{P}, this results in a challenging task due to the underlying hyperbolicity and the second-order bilinear structure $\nu\partial_t^2 p$. This character leads to an undesired effect of \textit{loss of regularity} in the SQP method (see \cref{alg:sqp}) causing two substantial difficulties (see \cref{remark: GE}): (i) In general, \cref{alg:sqp} is only executable for a limited number of iterations, i.e., the well-definedness of \cref{alg:sqp} may fail. (ii) The ansatz through the notion of strong regularity as done in the parabolic case (cf. \cite{troeltzsch99}) cannot be directly transferred to our case and requires a substantial extension.

This paper develops a novel strategy for analyzing \cref{alg:sqp} and consists of three primary steps. First of all, we propose the use of a smooth-in-time initial guess for the state $p_0$ and the adjoint state $q_0$ satisfying $\partial_t^l p_0(0) = \partial_t^l q_0(T) = 0$ for all $l\in\mathbb N$ (\cref{assumption3}). Under this regularity condition, we manage to prove the well-definedness of \cref{alg:sqp} (see \cref{proposition: well-definedness}). As the second step, for every given SQP iteration $(\nu_k, p_k, q_k)$, we construct a tailored self-mapping operator \cref{restriction} using certain solution operators $S_k$ and $T_k$, respectively, for the perturbed optimality conditions \cref{system: Sk} and the PDE-systems in the SQP iteration \cref{iteration}. Based on perturbation analysis (see \cref{theorem:lipschitzproperty}) using Stampacchia's method (see \cref{lemma: stampaccia}), it turns out that the contraction principle can be applied to the operator \cref{restriction} (see \cref{proposition contraction}). The resulting fixed point $\nu_{k+1}$ is exactly the control component of the solution to the SQP iteration \cref{iteration} (see \cref{proposition uniqueness}). The final step comprises a \textit{two-step} estimation process: We estimate $\|\nu_{k+1} - \overline\nu\|_{L^2(\Omega)}$ by the total error of the previous step $\|\nu_k - \overline\nu\|_{L^2(\Omega)}$, $\|p_k - \overline p\|_{L^2(I, L^2(\Omega))}$, and $\|q_k - \overline q\|_{L^2(I, L^2(\Omega))}$. Then, the error in the state $\|p_k - \overline p\|_{L^2(I, L^2(\Omega))}$ and adjoint state $\|q_k - \overline q\|_{L^2(I, L^2(\Omega))}$ are estimated towards $\|\nu_{k-1} - \overline\nu\|_{L^2(\Omega)}$. This process results in the quadratic two-step estimation $\|\nu_{k+1} - \overline\nu\|_{L^2(\Omega)}\leq \delta (\|\overline\nu - \nu_k\|_{L^2(\Omega)} + \|\overline\nu - \nu_{k-1}\|_{L^2(\Omega)})^2$ which eventually allows us to prove our main result on the R-superlinear convergence of \cref{alg:sqp} (see \cref{theorem: convergence}).

\section{PDE-constrained optimization for FWI}
\label{section optimization}

We denote the space of all equivalence classes of measurable and Lebesgue square integrable $\mathbb R$-valued functions by $L^2(\Omega)$. Furthermore, let 
\begin{align*}
	H^1_D(\Omega)\coloneqq\{& v \in H^1(\Omega)\colon\tau v =0 \text{ on } \Gamma_D\}
	\\
	\bm H_N(\Div,\Omega)\coloneqq\{&\bm{u}\in \bm H(\Div, \Omega)\colon(\Div\bm{u},\phi)_{L^2(\Omega)}= - (\bm{u},\nabla\phi)_{\bm{L}^2(\Omega)}\,\forall \phi\in H^1_D(\Omega)\},
	\\
	D(\Delta_{D,N}) \coloneqq \{&v\in H^1_D(\Omega): \nabla v \in \bm H_N(\Div, \Omega)\}
\end{align*} 
where the gradient and divergence are understood in the weak sense and $\tau\colon H^1(\Omega)\to L^2(\partial\Omega)$ denotes the trace operator.

\begin{assumption}\label{assumption1}
	Let $f\in W^{1,1}(I,L^2(\Omega))$ and $\eta \in L^\infty(\Omega)$ satisfying $\eta(x) \ge 0$ for a.e. $x\in\Omega$. Let $m\in\mathbb N$. For all $i=1,\ldots, m$, we suppose that $p^{ob}_{i} \in L^2(I,L^2(\Omega))$ and $a_i \in L^\infty(I\times \Omega)$ satisfying $a_i(t,x) \ge 0$ for a.e. $(t,x) \in I \times \Omega$. Furthermore, let $\nu_{\min}>0$, $\nu_{\max}>0$ and $\nu_-,\nu_+ \in L^\infty(\Omega)$ satisfy $\nu_{\min} \leq\nu_- (x) \leq\nu_+ (x)\leq\nu_{\max}$ for a.e. $x\in\Omega$.
\end{assumption}

The following lemma provides regularity conditions for the solutions of the second-order state equation. The result follows immediately from \cite[Lemma 2.2]{ammann23} (cf. also\cite[Theorem 5 on p. 410]{Evans2010}).

\begin{lemma}
	\label{lemma: abstract}
	Let \cref{assumption1} hold and let $\nu\in \mathcal V_{ad}$. Further, let $g\in W^{k,1}(I,L^2(\Omega))$ for some $k\geq 1$ and $\partial_t^lg(0)= 0$ for $l=0,\ldots, k-1$. Then, the unique solution $p$ to
	\begin{equation}\label{system: acoustic}
		\left\{\begin{aligned}
			&\nu\partial_t^2p - \Delta p + \eta \partial_t p = g 
			&&\text{in }I\times \Omega
			\\
			&\partial_n p = 0 
			&&\text{in }I\times\Gamma_N
			\\
			&p = 0
			&&\text{in }I\times\Gamma_D
			\\
			&(p,\partial_t p)(0) = (0,0) 
			&&\text{in }\Omega 
		\end{aligned}\right.
	\end{equation}
	satisfies $p\in C^{k+1}(I,L^2(\Omega))\cap C^k(I,H^1_D(\Omega))\cap C^{k-1}(I,D(\Delta_{D,N}))$ and it holds that
	\begin{align*}
		\|p(t)\|_{L^2(\Omega)}
		\leq c\|G\|_{L^1(I,L^2(\Omega))} \quad\text{and}\quad \|\partial_t^lp(t)\|_{L^2(\Omega)}
		\leq c\|\partial_t^{l-1} g\|_{L^1(I,L^2(\Omega))}
		\quad\forall t\in I\,\,\forall l = 1,\dots,k+1
	\end{align*}
	with $c\coloneqq \nu_{\min}^{-1}\frac{\max\left\{\sqrt{\nu_{\max}},1\right\}}{\min\{\sqrt{\nu_{\min}},1\}}$ and $G(t) \coloneqq \int_0^t g(s)\d s$ for all $t\in I$.
\end{lemma}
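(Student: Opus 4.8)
The plan is to derive every bound from the standard energy identity for the damped second-order wave equation, and to reduce the higher-order statements to it by differentiating \cref{system: acoustic} repeatedly in time. The cornerstone is the following fundamental estimate: if $v$ solves \cref{system: acoustic} with source $h\in L^1(I,L^2(\Omega))$ and homogeneous initial data, then $\|\partial_t v(t)\|_{L^2(\Omega)}\le c\,\|h\|_{L^1(I,L^2(\Omega))}$ for all $t\in I$. To obtain it, I would test the equation with $\partial_t v$, integrate over $\Omega$, and integrate the elliptic term by parts using the boundary conditions encoded in $H^1_D(\Omega)$ and $\bm H_N(\Div,\Omega)$. This produces the energy identity $\frac12\frac{\d}{\d t}\mathcal E(t)^2+\int_\Omega\eta\,|\partial_t v|^2\,\d x=\int_\Omega h\,\partial_t v\,\d x$ for the weighted energy $\mathcal E(t)^2\coloneqq\int_\Omega \nu\,|\partial_t v(t)|^2+|\nabla v(t)|^2\,\d x$. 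Since $\eta\ge 0$, the damping term is dropped; applying the Cauchy–Schwarz inequality together with the coercivity $\nu\ge\nu_{\min}$ to control $\|\partial_t v\|_{L^2(\Omega)}$ by $\mathcal E(t)$, dividing by $\mathcal E(t)$ (after the usual regularization at points where $\mathcal E$ vanishes), and integrating from $0$ to $t$ with $\mathcal E(0)=0$ gives the bound, the constant $c$ accounting for the weight factors relating the $\nu$-weighted energy to the plain $L^2$-norm.

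Two short reductions then yield all assertions. First, for $l=1,\dots,k+1$ I would apply the fundamental estimate to $v=\partial_t^{l-1}p$: differentiating \cref{system: acoustic} $(l-1)$-times in time shows that $\partial_t^{l-1}p$ solves the same equation with source $\partial_t^{l-1}g$, which lies in $L^1(I,L^2(\Omega))$ because $g\in W^{k,1}(I,L^2(\Omega))$ and $l-1\le k$; this gives $\|\partial_t^l p(t)\|_{L^2(\Omega)}\le c\,\|\partial_t^{l-1}g\|_{L^1(I,L^2(\Omega))}$. Second, to bound $p$ itself I introduce the solution $w$ of \cref{system: acoustic} with source $G(t)=\int_0^t g(s)\,\d s$; a direct check shows that $\partial_t w$ solves the same system as $p$ with identical homogeneous initial data, so $p=\partial_t w$ by uniqueness, and the fundamental estimate applied to $w$ yields $\|p(t)\|_{L^2(\Omega)}=\|\partial_t w(t)\|_{L^2(\Omega)}\le c\,\|G\|_{L^1(I,L^2(\Omega))}$.

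The crux of the argument—and the step I expect to be the most delicate—is justifying that each time-differentiated system carries \emph{homogeneous} initial data, since the fundamental estimate relies on $\mathcal E(0)=0$. This is where the compatibility hypothesis $\partial_t^l g(0)=0$ for $l=0,\dots,k-1$ enters. I would prove by induction that $\partial_t^j p(0)=0$ for $j=0,\dots,k+1$: the cases $j=0,1$ are the prescribed initial conditions, and evaluating the $(j-2)$-times differentiated equation at $t=0$ expresses $\nu\,\partial_t^j p(0)$ through $\partial_t^{j-2}g(0)$, $\Delta\partial_t^{j-2}p(0)$, and $\eta\,\partial_t^{j-1}p(0)$, all of which vanish by the inductive hypothesis and the compatibility conditions (the term $\partial_t^{j-2}g(0)$ vanishes precisely when $j-2\le k-1$, i.e. $j\le k+1$). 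Making the repeated time-differentiation rigorous requires the regularity $p\in C^{k+1}(I,L^2(\Omega))\cap C^{k}(I,H^1_D(\Omega))\cap C^{k-1}(I,D(\Delta_{D,N}))$, which I would take from the abstract well-posedness and regularity theory of \cite[Lemma 2.2]{ammann23} (cf. \cite[Theorem 5, p.~410]{Evans2010}); alternatively one argues at the level of a Galerkin or difference-quotient approximation, derives the estimates uniformly, and passes to the limit, thereby avoiding differentiating a solution not yet known to be smooth enough.
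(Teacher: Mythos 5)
Your proposal is correct, but it differs from the paper in a structural way: the paper does not prove \cref{lemma: abstract} at all --- it simply states that the result ``follows immediately from \cite[Lemma 2.2]{ammann23} (cf. also \cite[Theorem 5 on p.~410]{Evans2010})''. What you have written is essentially a reconstruction of the standard argument underlying those references: the improved-regularity theory for second-order hyperbolic equations proceeds exactly by your scheme --- energy identity, induction on time derivatives of the equation, and compatibility conditions guaranteeing homogeneous data for each differentiated system. Your two reductions are sound: the induction showing $\partial_t^j p(0)=0$ for $j\le k+1$ consumes the hypothesis $\partial_t^l g(0)=0$, $l\le k-1$, in precisely the required range, and the shift trick ($p=\partial_t w$ where $w$ solves the system with source $G$) is surely how the $\|G\|_{L^1(I,L^2(\Omega))}$ bound in the statement arises; note that $G\in W^{k+1,1}(I,L^2(\Omega))$ with $\partial_t^l G(0)=0$ for $l\le k$, so $w$ falls under the same existence theory. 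Your energy computation even yields the constant $\nu_{\min}^{-1}$, which is consistent with (indeed sharper than) the stated $c$. The one genuinely delicate point, which you correctly flag, is the top-order case $l=k+1$: the regularity $p\in C^{k+1}(I,L^2(\Omega))\cap C^k(I,H^1_D(\Omega))\cap C^{k-1}(I,D(\Delta_{D,N}))$ is not quite enough to write the pointwise equation for $v=\partial_t^k p$ with a classical Laplacian, so the energy identity at that level must be run on Galerkin or mollified approximations and passed to the limit --- your stated fallback. What your route buys is a self-contained proof with explicit constants; what the paper's citation buys is brevity, at the cost of importing the entire solution theory (including the regularity assertion, which your argument also has to import or re-derive at the approximation level).
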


In our previous work \cite{ammann23}, we have shown an existence result for \cref{P} and derived its first-order necessary and second-order sufficient optimality conditions. The analysis in \cite{ammann23} makes use of an elliptic inner regularity result to obtain an inner boundedness of the corresponding state (see \cite[Lemma 4.3]{ammann23}). 
It turns out that the inner regularity ansatz can be improved by applying Stampacchia's method to the hyperbolic case that provides even the global $L^\infty(\Omega)$-boundedness. This approach allows us to avoid the structural assumption on the admissible set $\mathcal V_{ad}$ as in \cite[Assumption 4.1]{ammann23}.

\begin{lemma}[Global boundedness]
	\label{lemma: stampaccia}
	Let $N\leq 3$ and $g\in W^{k,1}(I,L^2(\Omega))$ for some $k\geq 1$ and $\partial_t^l g(0) = 0$ for $l=0,\ldots, k-1$. Then, the unique solution $p$ to \cref{system: acoustic} satisfies $p\in C^{k-1}(I,L^\infty(\Omega))$ and
	\begin{equation*}
		\|\partial_t^lp\|_{C(I,L^\infty(\Omega))}\leq \check c(\|\partial_t^lg\|_{L^1(I,L^2(\Omega))} + \|\partial_t^{l+1} g\|_{L^1(I,L^2(\Omega))})\quad\forall l=0,\ldots, k-1
	\end{equation*}
	for a constant $\check c>0$ independent of $p$, $g$, and $l$. As a consequence, if additionally $g\in H^k(I, L^2(\Omega))$, it holds that
	\begin{equation*}
		\|\partial_t^lp\|_{L^2(I,L^\infty(\Omega))}\leq \hat c(\|\partial_t^lg\|_{L^2(I,L^2(\Omega))} + \|\partial_t^{l+1} g\|_{L^2(I,L^2(\Omega))})\quad\forall l=0,\ldots, k-1
	\end{equation*}
	with $\hat c \coloneqq \check c T$.
\end{lemma}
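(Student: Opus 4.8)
The plan is to freeze the time variable and recast the statement as a family of elliptic $L^\infty$-estimates, thereby bypassing any truncation of the second-order time derivative, which is the genuinely delicate point for hyperbolic problems. Fix $l\in\{0,\dots,k-1\}$ and set $w\coloneqq\partial_t^l p$. By \cref{lemma: abstract} we have $w\in C(I,D(\Delta_{D,N}))$ together with $\partial_t^{l+1}p,\partial_t^{l+2}p\in C(I,L^2(\Omega))$, so that differentiating \cref{system: acoustic} $l$ times in time (legitimate since $\nu,\eta$ are time-independent and the stated regularity holds) yields the pointwise-in-time identity
\begin{equation*}
	-\Delta w(t)=\partial_t^l g(t)-\nu\,\partial_t^{l+2}p(t)-\eta\,\partial_t^{l+1}p(t)\eqqcolon F_l(t)
\end{equation*}
in $L^2(\Omega)$ for every $t\in I$, with $w(t)\in H^1_D(\Omega)$ and $\nabla w(t)\in\bm H_N(\Div,\Omega)$ encoding the mixed boundary conditions. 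Thus, at each frozen time, $w(t)$ is the weak solution of a mixed Dirichlet--Neumann Poisson problem with right-hand side $F_l(t)\in L^2(\Omega)$.

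First I would establish the elliptic ingredient: for $u\in H^1_D(\Omega)$ solving $-\Delta u=F$ weakly with $F\in L^q(\Omega)$, $q>N/2$, Stampacchia's truncation method yields $\|u\|_{L^\infty(\Omega)}\le C\|F\|_{L^q(\Omega)}$ with $C=C(\Omega,N,q)$. Testing with $(u-\kappa)^+\in H^1_D(\Omega)$ (admissible since $u=0\le\kappa$ on $\Gamma_D$) and combining the energy bound with the Sobolev--Poincar\'e inequality $\|(u-\kappa)^+\|_{L^{2^*}}\le C_S\|\nabla(u-\kappa)^+\|_{L^2}$ gives, for the super-level sets $A(\kappa)=\{u>\kappa\}$, the recursion $|A(h)|\le C_S^{2\cdot 2^*}\|F\|_{L^2}^{2^*}(h-\kappa)^{-2^*}|A(\kappa)|^{2^*/2-1}$ for $h>\kappa$. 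The exponent $2^*/2-1$ exceeds $1$ precisely when the Sobolev exponent satisfies $2^*>4$, which holds for $N\le 3$ (for $N=2$ one fixes any finite Sobolev exponent larger than $4$); this is exactly where the dimensional restriction enters, since it renders the $L^2$ right-hand side admissible, i.e.\ $q=2>N/2$. Stampacchia's iteration lemma then forces $|A(\kappa)|=0$ beyond a finite level proportional to $\|F\|_{L^2}$.

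Applying this with $u=w(t)$ and $F=F_l(t)$ gives $\|\partial_t^l p(t)\|_{L^\infty(\Omega)}\le C\|F_l(t)\|_{L^2(\Omega)}$ uniformly in $t$, where $C$ depends only on $\Omega$ and $N$, hence is independent of $l$. It then remains to bound $\|F_l(t)\|_{L^2}\le \|\partial_t^l g(t)\|_{L^2}+\nu_{\max}\|\partial_t^{l+2}p(t)\|_{L^2}+\|\eta\|_{L^\infty}\|\partial_t^{l+1}p(t)\|_{L^2}$. For the last two terms I would invoke \cref{lemma: abstract} (whose range $l'=1,\dots,k+1$ covers the indices $l+1,l+2\le k+1$ since $l\le k-1$), obtaining $\|\partial_t^{l+1}p(t)\|_{L^2}\le c\|\partial_t^l g\|_{L^1(I,L^2)}$ and $\|\partial_t^{l+2}p(t)\|_{L^2}\le c\|\partial_t^{l+1}g\|_{L^1(I,L^2)}$; for the first term I would use $W^{1,1}(I)\hookrightarrow C(\bar I)$, giving $\sup_t\|\partial_t^l g(t)\|_{L^2}\le \tfrac1T\|\partial_t^l g\|_{L^1(I,L^2)}+\|\partial_t^{l+1}g\|_{L^1(I,L^2)}$. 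Collecting these bounds produces $\|\partial_t^l p\|_{C(I,L^\infty)}\le\check c(\|\partial_t^l g\|_{L^1(I,L^2)}+\|\partial_t^{l+1}g\|_{L^1(I,L^2)})$ with $\check c$ depending only on $\Omega$, $N$, $\nu_{\max}$, $\|\eta\|_{L^\infty}$, $T$ and the constant $c$ of \cref{lemma: abstract}, hence uniform in $l$; continuity $p\in C^{k-1}(I,L^\infty(\Omega))$ follows by applying the same elliptic estimate to $w(t)-w(s)$, using $F_l\in C(I,L^2(\Omega))$.

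Finally, the $L^2$-in-time statement is immediate: Cauchy--Schwarz in time twice turns each $\|\cdot\|_{L^1(I,L^2)}$ into $\sqrt T\|\cdot\|_{L^2(I,L^2)}$ and $\|\partial_t^l p\|_{L^2(I,L^\infty)}$ into $\sqrt T\|\partial_t^l p\|_{C(I,L^\infty)}$, yielding the asserted constant $\hat c=\check c T$. The main obstacle is the elliptic $L^\infty$-estimate with only $L^2$ data: it is valid exactly in the regime $N\le 3$, it must be set up on a Lipschitz domain with mixed boundary conditions, and one must verify that all the reductions hold pointwise in time, which is precisely what the $D(\Delta_{D,N})$-regularity furnished by \cref{lemma: abstract} guarantees.
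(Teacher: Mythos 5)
Your proof is correct and takes essentially the same route as the paper's: a pointwise-in-time elliptic Stampacchia estimate $\|y\|_{L^\infty(\Omega)}\leq C\|\Delta y\|_{L^2(\Omega)}$ on $D(\Delta_{D,N})$ (valid precisely for $N\leq 3$), combined with the equation to rewrite $-\Delta\partial_t^l p$ in terms of $g$ and higher time derivatives of $p$, which are in turn controlled via \cref{lemma: abstract}, and finally Cauchy--Schwarz in time for the $L^2(I,L^\infty(\Omega))$-estimate with $\hat c=\check c T$. The only cosmetic differences are that you unfold the truncation/iteration argument that the paper delegates to a citation, and you bound $\sup_{t\in I}\|\partial_t^l g(t)\|_{L^2(\Omega)}$ via the embedding $W^{1,1}(I)\hookrightarrow C(I)$, whereas the paper uses $\partial_t^l g(0)=0$ together with the fundamental theorem of calculus.
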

\begin{proof} 
	Since $N\leq 3$, Stampacchia's method yields the existence of a constant $\tilde c>0$ such that 
	\begin{equation}\label{stampacchia estimate}
		\|y\|_{L^\infty(\Omega)} \leq \tilde c \|\Delta y\|_{L^2(\Omega)} \quad \forall y \in D(\Delta_{D,N}) \quad \Rightarrow \quad D(\Delta_{D,N}) \hookrightarrow L^\infty(\Omega). 
	\end{equation}
	This is obtained by classical arguments (cf. \cite[Theorem 4.5]{troeltzsch10}) along with the property $\max\{v,0\}\in H^1_D(\Omega)$ for all $v\in H^1_D(\Omega)$. On the other hand, since $g\in W^{k,1}(I, L^2(\Omega))$, \cref{lemma: abstract} implies that the unique solution $p$ to \cref{system: acoustic} satisfies $p\in C^{k+1}(I,L^2(\Omega))\cap C^k(I,H^1_D(\Omega))\cap C^{k-1}(I,D(\Delta_{D,N}))$. Thus, \cref{stampacchia estimate} implies that 
	$p\in C^{k-1}(I, L^\infty(\Omega))$ and 
	\begin{align*}
			\|p\|_{C(I, L^\infty(\Omega))} \leq \tilde c \|\Delta p &\|_{C(I, L^2(\Omega))} \underbrace{=}_{\cref{system: acoustic}} \tilde c \| g - \nu\partial_t^{2} p - \eta\partial_t p\|_{C(I,L^2(\Omega))}
			\\
			& \hspace{1.5cm} \underbrace{\leq}_{Lem. \ref{lemma: abstract}} \tilde c(\| g\|_{C(I,L^2(\Omega))} + c\nu_{\max}\|\partial_t g\|_{L^1(I,L^2(\Omega))} +c\|\eta\|_{L^\infty}\| g\|_{L^1(I,L^2(\Omega))})
			\\
			&\hspace{1.85 cm} \leq \hspace{0.34cm} \tilde c((1 + c\nu_{\max})\|\partial_t g\|_{L^1(I,L^2(\Omega))} +c\|\eta\|_{L^\infty}\|g\|_{L^1(I,L^2(\Omega))}),
		\end{align*}
	due to the fundamental theorem of calculus and $g(0) = 0$. Introducing $\check c\coloneqq\tilde c\max\{1 + c\nu_{\max},c\|\eta\|_{L^\infty}\}$, we arrive at 
	\begin{equation*}
		\|p\|_{C(I, L^\infty(\Omega))} \leq \check c(\|g\|_{L^1(I,L^2(\Omega))} + \|\partial_t g\|_{L^1(I,L^2(\Omega))}).	
	\end{equation*}
	Analogously, the estimate for $\partial_t^lp$ with $l\in \{1,\ldots, k-1\}$ follows from \cref{stampacchia estimate}, \cref{system: acoustic}, \cref{lemma: abstract} and $\partial_t^l g(0) = 0$. In conclusion, the first desired estimate is valid. The second desired estimate follows immediately from the first one.
\end{proof}

\begin{assumption}\label{assumption2}
	Let \cref{assumption1} hold and let $N\leq 3$, $f \in W^{6,1}(I,L^2(\Omega))$ with $\partial_t^lf(0) = 0$ for $l = 0,\ldots,5$. Furthermore, let $(\overline\nu, \overline p, \overline q)\in \mathcal V_{ad}\times C^2(I,L^2(\Omega))\cap C^1(I,H^1_D(\Omega))\cap C(I,D(\Delta_{D,N}))\times C^2(I,L^2(\Omega))\cap C^1(I,H^1_D(\Omega))\cap C(I,D(\Delta_{D,N}))$ satisfy the first-order necessary optimality condition
	\begin{equation}\label{first order optimality}
		\left\{\begin{aligned}
			&\overline\nu\partial_{t}^2\overline p - \Delta\overline p + \eta \partial_t\overline p = f 
			&&\text{in }I\times \Omega
			\\
			&\partial_n \overline p = 0 
			&&\text{in }I\times\Gamma_N
			\\
			&\overline p = 0
			&&\text{in }I\times\Gamma_D
			\\
			&(\overline p,\partial_t\overline p)(0) = (0,0) 
			&&\text{in }\Omega
			\\
			&\overline\nu\partial_t^2 \overline q - \Delta \overline q - \eta\partial_t \overline q = \sum_{i=1}^{m}a_i(\overline p - p_i^{ob})
			&&\text{in }I\times\Omega
			\\
			&\partial_n \overline q = 0 
			&&\text{in }I\times\Gamma_N
			\\
			&\overline q = 0
			&&\text{in }I\times\Gamma_D
			\\
			&(\overline q, \partial_t\overline q)(T) = (0,0)
			&&\text{in }\Omega
			\\
			&\left(-\int_{0}^{T}\partial_t^2\overline p(t)\overline q(t) \d t +\lambda\overline{\nu},\nu-\overline{\nu} \right)_{L^2(\Omega)}\geq 0 
			&&\forall \nu\in\mathcal{V}_{ad}.
		\end{aligned}\right.
	\end{equation}
	Further, we assume that $p_i^{ob}\in W^{4,1}(I,L^2(\Omega))$, $a_i\in C^4(I, L^\infty(\Omega))$, and $\partial_t^l a_i(T) = 0$ for all $i=1,\ldots,m$ and $l=0,1,2,3$.
\end{assumption}

Under the \cref{assumption2}, the application of \cref{lemma: abstract} and \cref{lemma: stampaccia} to \cref{first order optimality} yields the higher regularity properties 
\begin{equation}\label{regularity overline p}
	\overline p\in C^7(I,L^2(\Omega))\cap C^6(I,H^1_D(\Omega))\cap C^5(I,D(\Delta_{D,N}))\cap C^5(I, L^\infty(\Omega))	
\end{equation}
and $\overline q\in C^5(I,L^2(\Omega))\cap C^4(I,H^1_D(\Omega))\cap C^3(I,D(\Delta_{D,N}))\cap C^3(I, L^\infty(\Omega))$. For the derivation and justification of the necessary optimality system \cref{first order optimality}, we refer the reader to \cite[Theorem 3.5]{ammann23}. Further, note that the regularities of $a_i$ are reasonable since, in the application, the product of characteristic functions in space with a smooth function in time is typically considered.

To prepare for the second-order sufficient optimality result, let us define the Lagrangian functional associated with \cref{P} by
\begin{equation}\label{def: lagrangian}
	\mathcal L (\nu,p,q) \coloneqq \mathcal J(\nu,p) - (\nu\partial_t^2 p - \Delta p + \eta \partial_t p -f, q)_{L^2(I,L^2(\Omega))} - (p(0),q(0))_{L^2(\Omega)} - (\partial_tp(0),\partial_tq(0))_{L^2(\Omega)}.
\end{equation}
For $h\in L^2(\Omega)$, we introduce the linearized state equation at $(\overline \nu,\overline p)$ as follows: 
\begin{equation}
	\label{system: statelin}
	\left\{\begin{aligned}
		&\overline\nu\partial_t^2\tilde p - \Delta\tilde p + \eta \partial_t\tilde p = -h\partial_t^2\overline p 
		&&\text{in }I\times \Omega
		\\
		&\partial_n \tilde p = 0 
		&&\text{in }I\times\Gamma_N
		\\
		&\tilde p = 0
		&&\text{in }I\times\Gamma_D
		\\
		&(\tilde p, \partial_t\tilde p)(0) = (0, 0) 
		&& \text{in }\Omega.
	\end{aligned}\right.
\end{equation}
Thanks to \cref{regularity overline p}, the well-posedness of \cref{system: statelin} follows immediately from \cref{lemma: abstract}. Furthermore, let the set of strongly active constraints be given by
\begin{equation}\label{def:A0}
		\mathscr A_{0}(\overline\nu) \coloneqq \left\{x\in\Omega\colon -\int_{0}^{T}\partial_t^2\overline p(t,x) \overline q(t,x)\d t + \lambda\overline\nu(x)\neq 0\right\}
\end{equation}
and let the associated critical cone be given by
\begin{equation*}
	C_{\overline\nu}^0 \coloneqq \{h\in L^2(\Omega) :\text{ For a.e. } x\in\Omega:h(x)\geq 0\text{ if } \overline\nu(x) = \nu_-(x); h(x)\leq 0 \text{ if } \overline\nu(x) = \nu_+(x)\,;
	h\big |_{\mathscr A_0(\overline\nu)} \equiv 0 \}.
\end{equation*}

\begin{theorem}\label{theorem: ssc}
	Let \cref{assumption2} hold. Further, assume that 
	\begin{equation}
		\label{ssc}\tag{SSC}
		\left\{\begin{aligned}
			&D_{(\nu,p)}^2\mathcal L(\overline\nu,\overline p,\overline q)(h,\tilde p)^2>0\quad \forall h\in C_{\overline\nu}^0\setminus\{0\}
			\\
			&\text{where $\tilde p\in C^2(I,L^2(\Omega))\cap C^1(I,H^1_D(\Omega))\cap C(I,D(\Delta_{D,N}))$ denotes the solution to \cref{system: statelin}}.
		\end{aligned}\right.
	\end{equation}
	Then, there exist $\sigma>0$ and $\beta >0$ such that the quadratic growth condition 
	\begin{equation}
		\label{ieq:growthcondition}
		\mathcal J(\nu, p)\geq \mathcal J(\overline\nu, \overline p) + \beta\|\nu-\overline\nu\|_{L^2(\Omega)}^2
	\end{equation}
	holds true for every $\nu\in\mathcal{V}_{ad}$ with $\|\nu-\overline\nu\|_{L^2(\Omega)}\leq\sigma$ and the corresponding solution $p$ to \cref{system: state}. In particular, $\overline\nu$ is a locally optimal solution to \cref{P}.
\end{theorem}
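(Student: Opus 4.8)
The plan is to argue by contradiction, exploiting the decisive structural feature that the Lagrangian \cref{def: lagrangian} is \emph{jointly quadratic} in $(\nu,p)$ -- its only nonlinearity is the bilinear term $\nu\partial_t^2 p$. Consequently the second-order Taylor expansion of $\mathcal L$ around $(\overline\nu,\overline p)$ is \emph{exact}, with no remainder. Suppose \cref{ieq:growthcondition} fails for every $\beta>0$. Then I obtain a sequence $\nu_n\in\mathcal V_{ad}$ with $t_n:=\|\nu_n-\overline\nu\|_{L^2(\Omega)}\to 0$, corresponding states $p_n$ solving \cref{system: state}, and $\mathcal J(\nu_n,p_n)-\mathcal J(\overline\nu,\overline p)<\tfrac1n t_n^2$. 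Writing $\delta\nu_n:=\nu_n-\overline\nu$, $\delta p_n:=p_n-\overline p$ and $h_n:=\delta\nu_n/t_n$ (so $\|h_n\|_{L^2(\Omega)}=1$), I pass to a subsequence with $h_n\rightharpoonup h$ in $L^2(\Omega)$. Since both $(\nu_n,p_n)$ and $(\overline\nu,\overline p)$ satisfy the state equation, the constraint terms in $\mathcal L$ cancel and $\mathcal J(\nu_n,p_n)-\mathcal J(\overline\nu,\overline p)=D_{(\nu,p)}\mathcal L(\overline\nu,\overline p,\overline q)(\delta\nu_n,\delta p_n)+\tfrac12 D^2_{(\nu,p)}\mathcal L(\overline\nu,\overline p,\overline q)(\delta\nu_n,\delta p_n)^2$.

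The first-order term is dispatched using \cref{first order optimality}. Differentiating $\mathcal L$ in $p$ and integrating by parts in time and space (all boundary contributions vanish because $\delta p_n(0)=\partial_t\delta p_n(0)=0$ and $\overline q(T)=\partial_t\overline q(T)=0$), the adjoint equation in \cref{first order optimality} yields $D_p\mathcal L(\overline\nu,\overline p,\overline q)(\delta p_n)=0$, while the variational inequality in \cref{first order optimality} gives $D_\nu\mathcal L(\overline\nu,\overline p,\overline q)(\delta\nu_n)=(\overline d,\delta\nu_n)_{L^2(\Omega)}\ge 0$ with $\overline d:=-\int_0^T\partial_t^2\overline p\,\overline q\,\d t+\lambda\overline\nu$. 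Dividing the expansion by $t_n^2$, the rescaled second-order form $\tfrac12 D^2_{(\nu,p)}\mathcal L(\overline\nu,\overline p,\overline q)(h_n,z_n)^2$, with $z_n:=\delta p_n/t_n$, therefore has nonpositive $\limsup$. To place $h$ in the critical cone, I note that the closed convex sign constraints defining $C^0_{\overline\nu}$ are inherited from $\nu_n\in\mathcal V_{ad}$ and preserved under weak limits; and since the two inequalities above force $(\overline d,h_n)_{L^2(\Omega)}\to 0$ while $\overline d\,h\ge 0$ a.e.\ by complementarity, I get $(\overline d,h)_{L^2(\Omega)}=0$, hence $h\equiv 0$ on the strongly active set $\mathscr A_0(\overline\nu)$ of \cref{def:A0}. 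Thus $h\in C^0_{\overline\nu}$.

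It remains to pass to the limit in the exact second-order form. Subtracting the state equations for $p_n$ and $\overline p$ shows that $z_n$ solves $\nu_n\partial_t^2 z_n-\Delta z_n+\eta\partial_t z_n=-h_n\partial_t^2\overline p$ with homogeneous data; crucially, the source is \emph{smooth in time}, and $\partial_t^l(h_n\partial_t^2\overline p)(0)=h_n\partial_t^{l+2}\overline p(0)=0$, since differentiating \cref{first order optimality} with $\partial_t^lf(0)=0$ gives $\partial_t^j\overline p(0)=0$. As $\|h_n\|_{L^2(\Omega)}=1$ and $\partial_t^l\overline p\in C(I,L^\infty(\Omega))$ by \cref{regularity overline p}, \cref{lemma: abstract} furnishes a uniform bound of $z_n$ in $C(I,H^1_D(\Omega))$ with $\partial_t z_n$ uniformly bounded in $C(I,L^2(\Omega))$. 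By the Aubin--Lions--Simon theorem and the compact embedding $H^1_D(\Omega)\hookrightarrow\hookrightarrow L^2(\Omega)$, a subsequence satisfies $z_n\to\tilde p_h$ strongly in $C(I,L^2(\Omega))$; passing to the limit in the weak form (using $\nu_n\to\overline\nu$ in $L^2(\Omega)$ with $\nu_{\min}\le\nu_n\le\nu_{\max}$, and $h_n\partial_t^2\overline p\rightharpoonup h\partial_t^2\overline p$) identifies $\tilde p_h$ as the unique solution of \cref{system: statelin} with $h$. The tracking part $\sum_i\int_0^T\!\!\int_\Omega a_i z_n^2$ then converges to $\sum_i\int_0^T\!\!\int_\Omega a_i\tilde p_h^2$ by strong convergence. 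For the bilinear cross term I integrate by parts twice in time, discarding all boundary contributions by the initial conditions of $z_n$ and the terminal conditions of $\overline q$, so that $\int_0^T\!\!\int_\Omega h_n\partial_t^2 z_n\,\overline q=\int_\Omega h_n\big(\int_0^T z_n\,\partial_t^2\overline q\,\d t\big)$; since $z_n\to\tilde p_h$ in $C(I,L^2(\Omega))$ and $\partial_t^2\overline q\in C(I,L^\infty(\Omega))$, the inner integral converges strongly in $L^2(\Omega)$, and the weak--strong pairing with $h_n\rightharpoonup h$ produces convergence to $\int_\Omega h\big(\int_0^T\tilde p_h\,\partial_t^2\overline q\,\d t\big)$.

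Collecting these limits, $\lim_n D^2_{(\nu,p)}\mathcal L(\overline\nu,\overline p,\overline q)(h_n,z_n)^2=Q(h)+\lambda\big(1-\|h\|_{L^2(\Omega)}^2\big)$, where $Q(h):=D^2_{(\nu,p)}\mathcal L(\overline\nu,\overline p,\overline q)(h,\tilde p_h)^2$ and the extra term appears because $\lambda\|h_n\|_{L^2(\Omega)}^2=\lambda$ does not pass to $\lambda\|h\|_{L^2(\Omega)}^2$ under weak convergence. As $\|h\|_{L^2(\Omega)}\le 1$ by weak lower semicontinuity, this limit is $\ge Q(h)$, yet the contradiction hypothesis forces it to be $\le 0$. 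If $h\neq 0$, then $h\in C^0_{\overline\nu}\setminus\{0\}$ and \cref{ssc} gives $Q(h)>0$, a contradiction; if $h=0$, then $\tilde p_h=0$ and the limit equals $\lambda>0$, again a contradiction. This establishes \cref{ieq:growthcondition}, whence $\overline\nu$ is locally optimal. The hard part -- and where the hyperbolic, non-smoothing character bites -- is the weak sequential continuity of the bilinear cross term $\int\!\int h_n\partial_t^2 z_n\,\overline q$: without parabolic smoothing one cannot expect strong convergence of $\partial_t^2 z_n$, so it is the time-integration by parts onto the smooth adjoint $\overline q$, backed by the uniform hyperbolic regularity of \cref{lemma: abstract} and \cref{lemma: stampaccia}, that reduces everything to the compactness of $z_n$ alone and isolates the coercive remainder $\lambda\big(1-\|h\|_{L^2(\Omega)}^2\big)$.
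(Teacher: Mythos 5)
Your proposal is correct and takes essentially the same route as the paper: the paper's own proof consists of deriving the stability estimate \cref{estimate for ssc} from \cref{lemma: abstract} and \cref{lemma: stampaccia} and then invoking the contradiction-and-compactness argument of \cite[Theorem 4.6]{ammann23}, which is precisely the argument you write out in full (normalization $h_n=(\nu_n-\overline\nu)/t_n$, weak limit placed in $C^0_{\overline\nu}$ via the first-order conditions, exact quadratic expansion of the Lagrangian, and the coercive remainder $\lambda(1-\|h\|_{L^2(\Omega)}^2)$ resolving both cases $h\neq 0$ and $h=0$). Your uniform hyperbolic bounds on $z_n$ and the double time-integration by parts onto the smooth adjoint $\overline q$ play exactly the role that \cref{estimate for ssc} (replacing the inner regularity lemma of \cite{ammann23} by the global Stampacchia bound) plays in the paper's version.
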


\begin{proof}
	Under \cref{assumption2}, for every $\nu, \hat\nu\in\mathcal V_{ad}$, by applying \cref{lemma: abstract} and \cref{lemma: stampaccia} to the difference of the corresponding solutions $p, \hat p$ to \cref{system: state}, we have that
	\begin{equation}\label{estimate for ssc}
		\|p - \hat p\|_{C^4(I, L^2(\Omega))} + \|p - \hat p\|_{C^2(I, L^\infty(\Omega))}\leq L\|\nu - \hat\nu\|_{L^2(\Omega)}
	\end{equation}
	for a constant $L>0$, independent of $\nu, \hat\nu, p, \hat p$. Thus, the proof follows precisely the one of \cite[Theorem 4.6]{ammann23} by replacing the inner regularity result \cite[Lemma 4.3]{ammann23} with \cref{estimate for ssc}.
\end{proof}

\section{Perturbed optimality system}
\label{section perturbed systems}

This section is devoted to the analysis of perturbed and linearized optimality systems associated with \cref{P}, which will play an essential role in the analysis of the SQP method (see \cref{section SQP}). In the following, let \cref{assumption2} hold. Given some perturbation term $(\rho^{VI},\rho^{st}, \rho^{adj})\in L^\infty(\Omega)\times H^1(I, L^2(\Omega))\times H^1(I, L^2(\Omega))$ with $\rho^{st}(0) = \rho^{adj}(T) = 0$, we consider the system 
\begin{equation}\label{system: linpertOC}\tag{OS}
	\left\{\begin{aligned}
		&\overline\nu\partial_t^2 p - \Delta p + \eta\partial_t p = f - (\nu - \overline\nu)\partial_t^2\overline p + \rho^{st}
		&&\text{in }I\times\Omega
		\\
		&\partial_n p = 0 
		&&\text{in }I\times\Gamma_N
		\\
		&p = 0
		&&\text{in }I\times\Gamma_D
		\\
		&(p, \partial_t p)(0) = (0, 0)
		&&\text{in }\Omega
		\\
		&\overline\nu\partial_t^2 q - \Delta q - \eta\partial_t q = \sum_{i=1}^m a_i(p - p_i^{ob}) - (\nu - \overline\nu)\partial_t^2\overline q + \rho^{adj}
		&&\text{in }I\times\Omega
		\\
		&\partial_n q = 0 
		&&\text{in }I\times\Gamma_N
		\\
		&q = 0
		&&\text{in }I\times\Gamma_D
		\\
		&(q, \partial_t q)(T) = (0, 0)
		&&\text{in }\Omega
		\\
		&\nu\in\mathcal V_{ad}, (-\int_0^T\!\!\!\partial_t^2\overline p(t) q(t) + \partial_t^2(p(t)-\overline p(t))\overline q(t)\d t + \lambda\nu, \tilde\nu - \nu)_{L^2(\Omega)} \geq (\rho^{VI}, \tilde\nu - \nu)_{L^2(\Omega)}
		&&\text{for all }\tilde\nu\in\mathcal V_{ad}.
	\end{aligned}\right.
\end{equation}
Sufficient second-order optimality conditions for \cref{P} are the main ingredients for the analysis of \cref{system: linpertOC}. Unfortunately, the originally proposed \cref{ssc} in \cref{theorem: ssc} is too weak for our purposes, as the involved critical cone $C_{\overline\nu}^0\setminus\{0\}$ is too restrictive. Thus, for a fixed $\tau >0$, we introduce the enlarged critical cone
\begin{equation}\label{def critical cone tau}
	C_{\overline\nu}^{\tau} \coloneqq \{h\in L^2(\Omega) : h(x) = 0 \text{ for a.e. } x\in \mathscr A_\tau(\overline\nu) \},
\end{equation}
where the set of $\tau$-uniform strongly active constraints is given by
\begin{equation}\label{def:Atau}
	\mathscr A_\tau(\overline\nu) \coloneqq \left\{x\in\Omega : \left|-\int_{0}^{T}\partial_t^2\overline p(t,x) \overline q(t,x)\d t + \lambda\overline\nu(x)\right| > \tau \right\}.
\end{equation}
Then, the strengthened sufficient second-order optimality condition reads as follows:
\begin{equation}\label{ssc'}\tag{$\text{SSC}^\tau$}
	\left\{
		\begin{aligned}
			&\text{There exist $\tau>0$ and $\alpha>0$ such that } D_{(\nu,p)}^2\mathcal L(\overline\nu,\overline p,\overline q)(h,\tilde p)^2>\alpha\|h\|_{L^2(\Omega)}^2 \text{for every $h\in C^\tau_{\overline\nu}$}
			\\
			&\text{where $\tilde p\in C^2(I,L^2(\Omega))\cap C^1(I,H^1_D(\Omega))\cap C(I,D(\Delta_{D,N}))$ denotes the solution to \cref{system: statelin}.}
		\end{aligned}
	\right.	
\end{equation}
Note that $\mathscr A_\tau(\overline\nu) \subset \mathscr A_0(\overline\nu)$ and therefore the new critical cone $C_{\overline\nu}^{\tau}$ is in fact enlarged, i.e. $C_{\overline\nu}^0\subset C_{\overline\nu}^{\tau}$. Therefore, the strengthened sufficient second-order condition \cref{ssc'} particularly implies the original condition \cref{ssc}. As a consequence, it also guarantees the optimality of $\overline\nu$ along with the quadratic growth condition \cref{ieq:growthcondition} under the same assumptions as in \cref{theorem: ssc}. Still, with the strengthened condition \cref{ssc'} the following difficulty appears: Given two controls $\nu_1, \nu_2\in \mathcal V_{ad}$, the difference $h = \nu-\overline\nu$ for $\nu\in\mathcal V_{ad}$ does not belong to the enlarged critical cone $C_{\overline\nu}^\tau$. Therefore, it does not satisfy the assumptions of \cref{ssc'}. To circumvent this difficulty, we extend well-known techniques (see \cite{troeltzsch99,volkwein03,wachsmuth07,hoppe21}) to our hyperbolic case. We consider an auxiliary problem by replacing the admissible set $\mathcal V_{ad}$ with 
\begin{equation}\label{def: Vad0}
	\mathcal V_{ad}^\tau \coloneqq \{\nu\in\mathcal V_{ad} \,\,|\,\,\nu = \overline\nu \text{ a.e. in }\mathscr A_\tau(\overline\nu)\}.
\end{equation}
Now, given two controls $\nu,\widetilde\nu\in\mathcal V_{ad}^\tau$, the difference $h = \nu-\widetilde\nu$ fullfils $h\in C_{\overline\nu}^\tau$. We define the following modification of the perturbed linearized optimality system \cref{system: linpertOC}:
\begin{equation}\label{system: linpertOC2}\tag{$\text{OS}^\tau$}
	\cref{system: linpertOC} \text{ where $\mathcal V_{ad}$ is replaced with $\mathcal V_{ad}^\tau$.}
\end{equation}

\begin{proposition}\label{proposition: well-definedness}
	Let \cref{assumption2} and \cref{ssc'} be satisfied. Then, for all $(\rho^{st}, \rho^{adj}, \rho^{VI})\in H^1(I, L^2(\Omega))\times H^1(I, L^2(\Omega))\times L^2(\Omega)$ with $\rho^{st}(0) = \rho^{adj}(T) = 0$, the system \cref{system: linpertOC2} admits a unique solution $(\nu,p,q)\in\mathcal V_{ad}^\tau\times C^2(I, L^2(\Omega))\cap C^1(I, H^1_D(\Omega))\cap C(I, D(\Delta_{D,N})) \times C^2(I, L^2(\Omega))\cap C^1(I, H^1_D(\Omega))\cap C(I, D(\Delta_{D,N}))$.
\end{proposition}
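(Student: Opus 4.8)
The plan is to reduce \cref{system: linpertOC2} to a variational inequality in the control variable alone and to recognize it as the first-order optimality condition of a strongly convex quadratic minimization problem over $\mathcal V_{ad}^\tau$, for which existence and uniqueness follow from the direct method. First I would introduce the (affine) control-to-state and control-to-adjoint solution operators. By \cref{lemma: abstract}, together with the regularity of $f$, $\overline p$, $\overline q$, $a_i$, and $p_i^{ob}$ granted by \cref{assumption2} and \cref{regularity overline p}, for every $\nu\in\mathcal V_{ad}$ the state equation in \cref{system: linpertOC} admits a unique solution $p = p(\nu)$ depending affinely on $\nu$; inserting $p(\nu)$ into the adjoint equation yields a unique $q = q(\nu)$, again affine in $\nu$. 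Substituting these into the variational inequality, \cref{system: linpertOC2} becomes: find $\nu\in\mathcal V_{ad}^\tau$ such that $(G(\nu),\tilde\nu-\nu)_{L^2(\Omega)}\geq 0$ for all $\tilde\nu\in\mathcal V_{ad}^\tau$, where $G\colon L^2(\Omega)\to L^2(\Omega)$ is a bounded affine map $G(\nu)=A\nu+b$ assembling the two time integrals, the term $\lambda\nu$, and the perturbation $\rho^{VI}$, with $\rho^{st}$ and $\rho^{adj}$ entering through the constant part $b$.

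Next I would show that $A$ is self-adjoint and that $G$ is the Gateaux derivative of the quadratic functional $\mathcal Q(\nu)\coloneqq\tfrac12(A\nu,\nu)_{L^2(\Omega)}+(b,\nu)_{L^2(\Omega)}$. The decisive structural identity is that, for $h_1,h_2\in C_{\overline\nu}^\tau$, one has $(Ah_1,h_2)_{L^2(\Omega)} = D_{(\nu,p)}^2\mathcal L(\overline\nu,\overline p,\overline q)[(h_1,\tilde p_{h_1}),(h_2,\tilde p_{h_2})]$, where $\tilde p_{h_i}$ denotes the solution of \cref{system: statelin} with $h=h_i$. This is precisely the mechanism by which the adjoint state represents the reduced Hessian: it is obtained by testing the linearized state equation for $h_1$ against the adjoint associated with $h_2$ and integrating by parts in time, the initial conditions at $t=0$ and terminal conditions at $t=T$ making all boundary contributions vanish. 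Symmetry of $A$ is then immediate from the symmetry of the bilinear form $D_{(\nu,p)}^2\mathcal L(\overline\nu,\overline p,\overline q)$.

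With this identification, coercivity follows from \cref{ssc'}. Since $\mathcal V_{ad}^\tau$ is a nonempty (it contains $\overline\nu$) closed convex subset of $L^2(\Omega)$ whose elements agree with $\overline\nu$ on $\mathscr A_\tau(\overline\nu)$, any difference $\nu_1-\nu_2$ of two of its elements lies in the enlarged critical cone $C_{\overline\nu}^\tau$. Hence $(A(\nu_1-\nu_2),\nu_1-\nu_2)_{L^2(\Omega)} = D_{(\nu,p)}^2\mathcal L(\overline\nu,\overline p,\overline q)(\nu_1-\nu_2,\tilde p_{\nu_1-\nu_2})^2\geq\alpha\|\nu_1-\nu_2\|_{L^2(\Omega)}^2$, so $\mathcal Q$ is $\alpha$-strongly convex and coercive on $\mathcal V_{ad}^\tau$. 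The direct method then provides a unique minimizer $\nu\in\mathcal V_{ad}^\tau$, whose first-order optimality condition is exactly the reduced variational inequality, equivalently the last line of \cref{system: linpertOC2}; the solution operators determine the corresponding $(p,q)=(p(\nu),q(\nu))$ uniquely. Finally, the asserted regularity is read off from \cref{lemma: abstract}: the state source $f-(\nu-\overline\nu)\partial_t^2\overline p+\rho^{st}$ lies in $W^{1,1}(I,L^2(\Omega))$ and vanishes at $t=0$ (using the vanishing of $f$, the identity $\partial_t^2\overline p(0)=0$ obtained from the state equation, and $\rho^{st}(0)=0$), which yields $p\in C^2(I,L^2(\Omega))\cap C^1(I,H^1_D(\Omega))\cap C(I,D(\Delta_{D,N}))$; reversing time via $t\mapsto T-t$ and using the terminal vanishing of the $a_i$, the identity $\partial_t^2\overline q(T)=0$, and $\rho^{adj}(T)=0$ gives the analogous regularity for $q$.

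The step I expect to be the main obstacle is the self-adjointness and identification in the second paragraph: one must match the reduced gradient $G$, which couples the forward state operator with the backward adjoint operator, to the symmetric second-order form $D_{(\nu,p)}^2\mathcal L(\overline\nu,\overline p,\overline q)$ so that the coercivity of \cref{ssc'} transfers to strong convexity of $\mathcal Q$. This hinges on the integration-by-parts-in-time adjoint identity linking the two solution operators, which in turn requires verifying the precise source regularities — in particular the temporal vanishing conditions at $t=0$ and $t=T$ — that render these manipulations and the higher time derivatives appearing in the integrals rigorous.
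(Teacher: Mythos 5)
Your proposal is correct and is essentially the paper's own argument: the paper likewise reduces \cref{system: linpertOC2} to a strictly convex quadratic minimization over $\mathcal{V}_{ad}^\tau$ (the functional $J_\rho$ in \cref{minimization}, built from the perturbed state operator $S_\rho$), uses that differences of elements of $\mathcal{V}_{ad}^\tau$ lie in $C_{\overline\nu}^{\tau}$ so that \cref{ssc'} yields $J_\rho''(\nu)h^2>\alpha\|h\|_{L^2(\Omega)}^2$, and obtains existence, uniqueness, and the equivalence of the optimality condition with the variational inequality, with the regularity of $(p,q)$ read off from \cref{lemma: abstract} exactly as you do. The only difference is organizational: the paper writes down the reduced functional first and recovers the variational inequality as its first-order condition via the adjoint representation, whereas you start from the reduced variational inequality and verify symmetry of its affine operator (your integration-by-parts-in-time identity) to construct the potential $\mathcal{Q}$ --- the same computation carried out in the reverse direction.
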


\begin{proof}
	Let $(\rho^{st}, \rho^{adj}, \rho^{VI})\in H^1(I, L^2(\Omega))\times H^1(I, L^2(\Omega))\times L^2(\Omega)$ with $\rho^{st}(0) = \rho^{adj}(T) = 0$ be given. Thanks to \cref{regularity overline p}, \cref{lemma: abstract} implies that 
	\begin{equation}\label{system: linearized pertubed}
		\left\{\begin{aligned}
			&\overline\nu\partial_t^2 p - \Delta p + \eta\partial_t p = f - (\nu - \overline\nu)\partial_t^2\overline p + \rho^{st}
			&&\text{in }I\times\Omega
			\\
			&\partial_n p = 0 
			&&\text{in }I\times\Gamma_N
			\\
			&p = 0
			&&\text{in }I\times\Gamma_D
			\\
			&(p, \partial_t p)(0) = (0, 0)
			&&\text{in }\Omega
		\end{aligned}\right.
	\end{equation}
	admits for every $\nu\in L^2(\Omega)$ a unique solution $p\in C^2(I, L^2(\Omega))\cap C^1(I, H^1_D(\Omega))\cap C(I, D(\Delta_{D, N}))$. Denoting by $S_\rho\colon L^2(\Omega)\to C^2(I, L^2(\Omega))\cap C^1(I, H^1_D(\Omega))\cap C(I, D(\Delta_{D,N})), \nu\mapsto p$ the affine linear and continuous solution operator to \cref{system: linearized pertubed}, we consider the minimization problem
	\begin{align} \label{minimization}
		\min_{\nu\in\mathcal V_{ad}^\tau} J_\rho(\nu)&\coloneqq \mathcal J(\nu,S_\rho(\nu)) + \left(-\int_0^T\partial_t^2 (S_\rho(\nu) - \overline p)\overline q\d t - \rho^{VI},\nu - \overline\nu\right)_{L^2(\Omega)} + (\rho^{adj},S_\rho(\nu))_{L^2(I,L^2(\Omega))} 
		\\\notag
		&\underbrace{=}_{\cref{P}} \frac{1}{2} \sum_{i=1}^m \int_{0}^{T}\int_{\Omega}a_i(S_\rho(\nu )-p^{ob}_i)^2\d x\d t+\frac{\lambda}{2}\|\nu\|_{L^2(\Omega)}^2 + \left(-\int_0^T\partial_t^2 (S_\rho(\nu) - \overline p)\overline q\d t - \rho^{VI},\nu - \overline\nu\right)_{L^2(\Omega)}
		\\\notag
		&\quad + (\rho^{adj},S_\rho(\nu))_{L^2(I,L^2(\Omega))}.
	\end{align}
	By the quadratic structure of $J_\rho$, we have that
	\begin{equation}\label{taylorexpansion Jrho}
		J_\rho(\tilde\nu) = J_\rho(\nu) +J'_\rho(\nu)(\tilde\nu - \nu) + \frac{1}{2}J''_\rho(\nu)(\tilde\nu - \nu)^2\quad\forall \nu,\tilde\nu\in\mathcal V_{ad}^\tau.
	\end{equation}
	Further, for any $\nu,\tilde\nu\in\mathcal V_{ad}^\tau$, it holds that $h\coloneqq \tilde\nu - \nu\in C^\tau_{\overline\nu}$ (see \cref{def critical cone tau}) and $\tilde p\coloneqq S_\rho(\tilde\nu) - S_\rho(\nu)$ solves the linearized state equation \cref{system: statelin} such that \cref{ssc'} yields that 
	\begin{align*}
		\alpha\|h\|_{L^2(\Omega)}^2<D_{(\nu,p)}^2\mathcal L(\overline\nu,\overline p,\overline q)(h,\tilde p)^2 \underbrace{=}_{\cref{def: lagrangian}} \sum_{i=1}^m(a_i\tilde p,\tilde p)_{L^2(I,L^2(\Omega))} + \lambda\|h\|_{L^2(\Omega)}^2 - 2 (h\partial_t^2\tilde p,\overline q)_{L^2(I,L^2(\Omega))} 
		\underbrace{=}_{\cref{minimization}} J''_\rho(\nu)h^2,
	\end{align*}
	where for the last equality we have used the fact that $S_\rho$ is continuous and affine linear such that
	\begin{equation*}
		S_\rho'(\nu) h = S_\rho'(\nu) (\tilde \nu - \nu) = S_\rho(\tilde\nu) - S_\rho(\nu) = \tilde p \quad \text{and} \quad S_\rho''(\nu) h^2=0.	
	\end{equation*}
	Therefore, along with \cref{taylorexpansion Jrho}, the strict convexity of $J_\rho$ in $\mathcal V_{ad}^\tau$ is obtained. In conclusion, together with the continuity of $J_\rho\colon L^2(\Omega)\to\mathbb R$, \cref{minimization} admits a unique solution $\nu\in \mathcal V_{ad}^\tau$. Moreover, the necessary and sufficient optimality condition for \cref{minimization} is given by $\nu \in \mathcal V_{ad}^\tau$ and $J'_\rho(\nu)(\tilde\nu - \nu)\geq 0$ for every $\tilde\nu\in\mathcal V_{ad}^\tau$, which is equivalent to \cref{system: linpertOC2} due to standard arguments. Thus, the claim is valid. 
\end{proof}

The following \cref{theorem:lipschitzproperty} provides the crucial stability result for the solution to \cref{system: linpertOC2} regarding the perturbation terms. In the proof, we extend known ideas incorporating the first- and second-order optimality conditions (cf. \cite{troeltzsch01,volkwein03}) to our given case. Here, the effect of loss of regularity causes some challenges that we handle by using the stronger norm $\|\cdot\|_{H^1(I, L^2(\Omega))}$ instead of $\|\cdot\|_{L^2(I, L^2(\Omega))}$ at the right hand side of the estimates in \cref{Lipschitzproperties2,Lipschitzproperties3}.

\begin{theorem}\label{theorem:lipschitzproperty}
	Let \cref{assumption2} and \cref{ssc'} hold. 
	Then, there exist constants $L, L_p, L_q>0$ such that for all perturbation terms $(\rho^{st}, \rho^{adj},\rho^{VI}), (\widetilde\rho^{st}, \widetilde\rho^{adj},\widetilde\rho^{VI})\in H^1(I, L^2(\Omega))\times H^1(I, L^2(\Omega))\times L^2(\Omega)$ with $\rho^{st}(0) = \widetilde\rho^{st}(0) = \rho^{adj}(T) = \widetilde\rho^{adj}(T) = 0$, the corresponding solutions $(\nu_\rho,p_\rho,q_\rho)$ and $(\nu_{\widetilde\rho},p_{\widetilde\rho},q_{\widetilde\rho})$ to \cref{system: linpertOC2} satisfy
	\begin{align}\label{Lipschitzproperties}
		\|\nu_\rho - \nu_{\widetilde\rho}\|_{L^2(\Omega)}
		&\leq L(\|\rho^{st} - \widetilde\rho^{st}\|_{L^2(I, L^2(\Omega))} + \|\rho^{adj} - \widetilde\rho^{adj}\|_{L^2(I, L^2(\Omega))} + \|\rho^{VI} - \widetilde\rho^{VI}\|_{L^2(\Omega)})
		\\\label{Lipschitzproperties2}
		\|p_\rho - \overline p\|_{L^2(I,L^\infty(\Omega))} 
		&\leq L_p(\|\rho^{st}\|_{H^1(I, L^2(\Omega))} + \|\rho^{adj}\|_{L^2(I, L^2(\Omega))} + \|\rho^{VI}\|_{L^2(\Omega)})
		\\\label{Lipschitzproperties3}
		\|q_\rho - \overline q\|_{L^2(I,L^\infty(\Omega))} 
		&\leq L_q(\|\rho^{st}\|_{L^2(I, L^2(\Omega))} + \|\rho^{adj}\|_{H^1(I, L^2(\Omega))} + \|\rho^{VI}\|_{L^2(\Omega)}).
	\end{align}
\end{theorem}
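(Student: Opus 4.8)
The plan is to derive \cref{Lipschitzproperties2} and \cref{Lipschitzproperties3} as the special case $\widetilde\rho=0$ of the Lipschitz estimate \cref{Lipschitzproperties}. Indeed, inserting $\nu=\overline\nu$, $p=\overline p$, $q=\overline q$ and $\widetilde\rho=0$ into \cref{system: linpertOC2} reproduces exactly the optimality system \cref{first order optimality} restricted to $\mathcal V_{ad}^\tau$, so by the uniqueness part of \cref{proposition: well-definedness} the triple $(\overline\nu,\overline p,\overline q)$ is the solution of \cref{system: linpertOC2} for $\widetilde\rho=0$. Throughout I would abbreviate $h\coloneqq\nu_\rho-\nu_{\widetilde\rho}$, $\delta p\coloneqq p_\rho-p_{\widetilde\rho}$, $\delta q\coloneqq q_\rho-q_{\widetilde\rho}$ and note that subtracting the two state (resp.\ adjoint) equations of \cref{system: linpertOC2} shows that $\delta p$ solves a wave equation driven by $-h\partial_t^2\overline p+(\rho^{st}-\widetilde\rho^{st})$ and $\delta q$ a backward wave equation driven by $\sum_i a_i\delta p-h\partial_t^2\overline q+(\rho^{adj}-\widetilde\rho^{adj})$. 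The decisive structural fact is that, since $\nu_\rho,\nu_{\widetilde\rho}\in\mathcal V_{ad}^\tau$ both coincide with $\overline\nu$ on $\mathscr A_\tau(\overline\nu)$, the difference $h$ lies in the enlarged critical cone $C_{\overline\nu}^\tau$, which is precisely what renders \cref{ssc'} applicable.

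For \cref{Lipschitzproperties} I would test the variational inequality of \cref{system: linpertOC2} for $\nu_\rho$ with $\widetilde\nu=\nu_{\widetilde\rho}$ and the one for $\nu_{\widetilde\rho}$ with $\widetilde\nu=\nu_\rho$, and add them. After cancellation this yields
\begin{equation*}
	\left(-\int_0^T\partial_t^2\overline p\,\delta q+\partial_t^2\delta p\,\overline q\,\d t+\lambda h,\,h\right)_{L^2(\Omega)}\leq(\rho^{VI}-\widetilde\rho^{VI},\,h)_{L^2(\Omega)}.
\end{equation*}
The core idea is then the linear splitting $\delta p=\tilde p+z_p$ and $\delta q=\tilde q+z_q$, where $(\tilde p,\tilde q)$ is the linearized pair driven solely by $h$ (so that $\tilde p$ solves \cref{system: statelin}), while $(z_p,z_q)$ collects the perturbation contributions $\rho^{st}-\widetilde\rho^{st}$ and $\sum_i a_i z_p+(\rho^{adj}-\widetilde\rho^{adj})$. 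Repeated integration by parts in time, using the homogeneous initial data of $\tilde p$ and the homogeneous terminal data of $\tilde q$ and $\overline q$, identifies the linearized part with the Lagrangian Hessian,
\begin{equation*}
	\left(-\int_0^T\partial_t^2\overline p\,\tilde q+\partial_t^2\tilde p\,\overline q\,\d t+\lambda h,\,h\right)_{L^2(\Omega)}=D_{(\nu,p)}^2\mathcal L(\overline\nu,\overline p,\overline q)(h,\tilde p)^2\geq\alpha\|h\|_{L^2(\Omega)}^2,
\end{equation*}
the last step being \cref{ssc'} since $h\in C_{\overline\nu}^\tau$. It then remains to absorb the coupling terms carrying $z_p$ and $z_q$. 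The key manipulation is to integrate by parts twice in time in the $z_p$-term, $\int_0^T\partial_t^2 z_p\,\overline q\,\d t=\int_0^T z_p\,\partial_t^2\overline q\,\d t$, which moves both derivatives onto the smooth reference solution $\overline q$; the $z_q$-term already carries its derivatives on $\overline p$. By \cref{regularity overline p} the factors $\partial_t^2\overline p,\partial_t^2\overline q$ lie in $C(I,L^\infty(\Omega))$, and \cref{lemma: abstract} bounds $\|z_p\|_{C(I,L^2(\Omega))}$ and $\|z_q\|_{C(I,L^2(\Omega))}$ by the $L^2(I,L^2(\Omega))$-norms of the perturbation differences alone, with no time derivatives of $\rho^{st}-\widetilde\rho^{st}$ or $\rho^{adj}-\widetilde\rho^{adj}$ required. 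Combining these bounds, dividing by $\|h\|_{L^2(\Omega)}$, gives \cref{Lipschitzproperties}.

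For \cref{Lipschitzproperties2,Lipschitzproperties3} I would set $\widetilde\rho=0$ and estimate $\delta p=p_\rho-\overline p$ and $\delta q=q_\rho-\overline q$ in the $L^2(I,L^\infty(\Omega))$-norm via the same splitting, now invoking \cref{lemma: stampaccia}, which is available because $N\leq 3$. The linearized parts $\tilde p,\tilde q$ are controlled by $\|h\|_{L^2(\Omega)}$, hence by the right-hand side of \cref{Lipschitzproperties} with $\widetilde\rho=0$; the applicability of \cref{lemma: stampaccia} to them rests on the endpoint conditions $\partial_t^j\overline p(0)=\partial_t^j\overline q(T)=0$ and $\partial_t^l a_i(T)=0$ that follow from \cref{assumption2} and \cref{first order optimality} (these in turn make the relevant source terms vanish at the endpoints). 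The perturbation part $z_p$ of $\delta p$ is driven by $\rho^{st}$, and the $L^2(I,L^\infty(\Omega))$-estimate of \cref{lemma: stampaccia} requires $\|\rho^{st}\|_{H^1(I,L^2(\Omega))}$, whereas $\rho^{adj}$ enters $\delta p$ only through $h$ and thus only in $L^2(I,L^2(\Omega))$ — this is exactly the asymmetry recorded in \cref{Lipschitzproperties2}. The symmetric argument for $\delta q$, carried out after the time reversal $t\mapsto T-t$ that turns the backward adjoint equation into a forward one, produces \cref{Lipschitzproperties3} with $\|\rho^{adj}\|_{H^1(I,L^2(\Omega))}$.

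I expect the main obstacle to be \cref{Lipschitzproperties}: the coercivity in \cref{ssc'} controls only the purely linearized Hessian $D_{(\nu,p)}^2\mathcal L(\overline\nu,\overline p,\overline q)(h,\tilde p)^2$, yet the added variational inequalities involve the full perturbed differences $\delta p,\delta q$. Reconciling these requires both the exact algebraic identification of the linearized coupling with the Hessian and the time-integration-by-parts that shifts all derivatives off the low-regularity perturbation parts $z_p,z_q$ onto $\overline p,\overline q$. Without this shift the right-hand side of \cref{Lipschitzproperties} would have to carry $H^1(I,L^2(\Omega))$-norms of the perturbations, which would be too weak to close the contraction argument of \cref{section SQP}.
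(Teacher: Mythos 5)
Your proposal is correct and rests on the same pillars as the paper's proof: the observation that $h=\nu_\rho-\nu_{\widetilde\rho}\in C^{\tau}_{\overline\nu}$ (so that \cref{ssc'} applies), the splitting of the state difference into a part solving \cref{system: statelin} plus a part driven only by $\rho^{st}-\widetilde\rho^{st}$ (your $z_p$ is exactly the paper's $\hat p_{\rho,\widetilde\rho}$ from \cref{prhorho}), the addition of the two variational inequalities, the double integration by parts in time that shifts $\partial_t^2$ off the perturbation-driven parts onto the smooth $\overline p,\overline q$, and \cref{lemma: abstract} for the $L^2(I,L^2(\Omega))$-bounds; your treatment of \cref{Lipschitzproperties2,Lipschitzproperties3} (Stampacchia via \cref{lemma: stampaccia} applied to the systems for $p_\rho-\overline p$ and $q_\rho-\overline q$, combined with \cref{Lipschitzproperties} at $\widetilde\rho=0$ and the fact that $(\overline\nu,\overline p,\overline q)$ solves \cref{system: linpertOC2} with zero perturbations) coincides with the paper's. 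Where you genuinely deviate is the bookkeeping for \cref{Lipschitzproperties}: you also split the adjoint difference, $\delta q=\tilde q+z_q$, and identify the linearized portion of the VI pairing exactly with $D^2_{(\nu,p)}\mathcal L(\overline\nu,\overline p,\overline q)(h,\tilde p)^2$ by state--adjoint duality, so the perturbations enter only linearly and the estimate closes by dividing by $\|h\|_{L^2(\Omega)}$. The paper instead keeps $\delta q$ whole, expands the Hessian at $(h,\delta p-\hat p_{\rho,\widetilde\rho})$, rewrites $\sum_i(a_i\delta p,\delta p)$ through the adjoint difference equation \cref{system: adjointdifference}, and closes with auxiliary bounds on $\|\delta p\|,\|\delta q\|$ plus Young's inequality (see \cref{ieq: alpha1} ff.\ in the appendix). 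Your organization is arguably cleaner, avoiding the quadratic cross terms and the Young step; its only extra obligation is to justify the duality identity, i.e.\ the space--time integrations by parts between $\tilde p$ and $\tilde q$, which require the regularity delivered by \cref{lemma: abstract} and the endpoint conditions $\tilde p(0)=\partial_t\tilde p(0)=0$, $\tilde q(T)=\partial_t\tilde q(T)=0$, together with $a_i(T)=0$ and $\partial_t^2\overline q(T)=0$ so that the source of the $\tilde q$-equation vanishes at $t=T$ — conditions that \cref{assumption2} and \cref{first order optimality} indeed provide. Both routes yield the decisive feature that only $L^2$-norms of the perturbation differences appear on the right-hand side of \cref{Lipschitzproperties}.
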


\begin{proof}
	Let $(\nu_\rho,p_\rho,q_\rho), (\nu_{\widetilde\rho},p_{\widetilde\rho},q_{\widetilde\rho})\in \mathcal V_{ad}^\tau\times (C^2(I, L^2(\Omega))\cap C^1(I, H^1_D(\Omega))\cap C(I, D(\Delta_{D,N})))^2$ denote the unique solutions to \cref{system: linpertOC2} with respect to $(\rho^{st},\rho^{adj}, \rho^{VI})$ and $(\widetilde\rho^{VI},\widetilde\rho^{st},\widetilde\rho^{adj})$ according to \cref{proposition: well-definedness}. Subtracting the corresponding PDE-systems (see \cref{system: linpertOC}), we obtain that 
	\begin{align}
		\label{system: statedifference}
		&\left\{\begin{aligned}
			&\overline\nu\partial_t^2(p_\rho- p_{\widetilde\rho}) - \Delta(p_\rho- p_{\widetilde\rho}) + \eta\partial_t(p_\rho- p_{\widetilde\rho}) = -(\nu_\rho- \nu_{\widetilde\rho})\partial_t^2\overline p + \rho^{st}-\widetilde\rho^{st}
			&&\text{in }I\times \Omega
			\\
			&\partial_n(p_\rho- p_{\widetilde\rho}) = 0
			&&\text{in }I\times\Gamma_N
			\\
			&p_\rho- p_{\widetilde\rho} = 0
			&&\text{in }I\times\Gamma_D
			\\
			&(p_\rho- p_{\widetilde\rho},\partial_t(p_\rho- p_{\widetilde\rho}))(0) = (0, 0) && \text{in }\Omega
		\end{aligned}\right.
		\\
		\label{system: adjointdifference}
		&\left\{\begin{aligned}
			&\overline\nu\partial_t^2(q_\rho- q_{\widetilde\rho}) - \Delta(q_\rho- q_{\widetilde\rho}) - \eta\partial_t(q_\rho- q_{\widetilde\rho}) = \sum_{i=1}^m a_i(p_\rho - p_{\widetilde\rho}) -(\nu_\rho- \nu_{\widetilde\rho})\partial_t^2\overline q + \rho^{adj}-\widetilde\rho^{adj}
			&&\text{in }I\times \Omega
			\\
			&\partial_n(q_\rho- q_{\widetilde\rho}) = 0
			&&\text{in }I\times\Gamma_N
			\\
			&q_\rho- q_{\widetilde\rho} = 0
			&&\text{in }I\times\Gamma_D
			\\
			&(q_\rho- q_{\widetilde\rho},\partial_t(q_\rho- q_{\widetilde\rho}))(T) = (0, 0) 
			&& \text{in }\Omega.
		\end{aligned}\right.
	\end{align}
	We begin by elaborating on the control parameter. By the construction of $\mathcal V_{ad}^\tau$ (see \cref{def critical cone tau}), the quantity $h\coloneqq\nu_\rho - \nu_{\widetilde\rho}$ lies in the critical cone $C_{\nu}^\tau$, and $p_\rho - p_{\widetilde\rho} - \hat p_{\rho,\widetilde\rho}$ solves the associated linearized state equation \cref{system: statelin} where $\hat p_{\rho,\widetilde\rho}$ denotes the solution to 
	\begin{equation}\label{prhorho}
		\left\{\begin{aligned}
			&\overline\nu\partial_t^2\hat p_{\rho,\widetilde\rho} - \Delta\hat p_{\rho,\widetilde\rho} + \eta\partial_t\hat p_{\rho,\widetilde\rho} = \rho^{st}-\widetilde\rho^{st}
			&&\text{in }I\times \Omega
			\\
			&\partial_n \hat p_{\rho,\widetilde\rho} = 0 
			&&\text{in }I\times\Gamma_N
			\\
			&\hat p_{\rho,\widetilde\rho} = 0
			&&\text{in }I\times\Gamma_D
			\\
			&(\hat p_{\rho,\widetilde\rho},\partial_t\hat p_{\rho,\widetilde\rho})(0) = (0, 0) && \text{in }\Omega.
		\end{aligned}\right.
	\end{equation}
	Thus, \cref{ssc'} yields that	
	\begin{align}
		\label{ieq:nurho00}
			\alpha\|\nu_\rho - \nu_{\widetilde\rho}\|_{L^2(\Omega)}^2\leq D_{(\nu,p)}^2\mathcal L(\overline \nu, \overline p, \overline q)(\nu_\rho - \nu_{\widetilde\rho},p_\rho - p_{\widetilde\rho} - \hat p_{\rho,\widetilde\rho})^2
	\end{align} 
	Utilizing \cref{ieq:nurho00} in combination with \cref{lemma: abstract}, we come to the conclusion that \cref{Lipschitzproperties} holds. The detailed proof for this can be found in the \cref{subsection proof pertubation}. To prove \cref{Lipschitzproperties2}, notice that $p_\rho - \overline p$ is the solution to
	\begin{equation*}
		\left\{\begin{aligned}
			&\overline\nu\partial_t^2(p_\rho - \overline p) - \Delta(p_\rho - \overline p) + \eta\partial_t(p_\rho - \overline p) = - (\nu_\rho - \overline\nu)\partial_t^2\overline p + \rho^{st} 
			&&\text{in }I\times\Omega
			\\
			&\partial_n (p_\rho - \overline p) = 0
			&&\text{in }I\times\Gamma_N
			\\
			&p_\rho - \overline p = 0
			&&\text{in }I\times\Gamma_D
			\\
			&(p_\rho - \overline p,\partial_t(p_\rho - \overline p))(0) = (0,0)
			&&\text{in }\Omega.
		\end{aligned}\right.
	\end{equation*}
	Applying \cref{lemma: stampaccia} to the above system yields that
	\begin{align}\label{ieq p rho overline p}
		&\|p_\rho - \overline p\|_{L^2(I,L^\infty(\Omega))}
		\\\notag
		&\leq \hat c (\|(\nu_\rho - \overline\nu)\partial_t^2\overline p\|_{L^2(I,L^2(\Omega))} + \|\rho^{st}\|_{L^2(I,L^2(\Omega))}+ \|(\nu_\rho - \overline\nu)\partial_t^3\overline p\|_{L^2(I,L^2(\Omega))}+ \|\partial_t\rho^{st}\|_{L^2(I,L^2(\Omega))})
		\\\notag
		&\leq \hat c((\|\partial_t^2\overline p\|_{L^2(I,L^\infty(\Omega))} + \|\partial_t^3\overline p\|_{L^2(I,L^\infty(\Omega))})\|\nu_\rho - \overline\nu\|_{L^2(\Omega)}+ \|\rho^{st}\|_{L^2(I,L^2(\Omega))} + \|\partial_t\rho^{st}\|_{L^2(I,L^2(\Omega))}).
	\end{align}
	Since $(\overline\nu, \overline p, \overline q)$ solves \cref{system: linpertOC2} with $(\rho^{st}, \rho^{adj},\rho^{vi}) = 0$, applying \cref{Lipschitzproperties} to \cref{ieq p rho overline p} leads to \cref{Lipschitzproperties2}. Analogously, applying \cref{lemma: abstract} to the above system, we obtain with $G(t)\coloneqq \int_0^{t} g(s) \d s$ and $g(s) \coloneqq - (\nu_\rho - \overline\nu)\partial_t^2\overline p (s) + \rho^{st} (s)$ that 
	\begin{align}\label{ieq: alpha3proof}
		\| p_\rho - \overline p \|_{L^2(I,L^2(\Omega))} 
		&\leq \sqrt{T}c\|G\|_{L^1(I, L^2(\Omega))}\leq T^{\frac{3}{2}}c( \| \nu_\rho - \overline\nu \|_{L^2(\Omega)} \| \partial_t^2\overline p\|_{L^1(I, L^\infty(\Omega))} + \|\rho^{st} \|_{L^1(I, L^2(\Omega))} ) 
		\\\notag
		\| \partial_t(p_\rho - \overline p) \|_{L^2(I,L^2(\Omega))} 
		&\leq \sqrt{T}c\|g\|_{L^1(I, L^2(\Omega))}\leq \sqrt{T}c( \| \nu_\rho - \overline\nu \|_{L^2(\Omega)} \|\partial_t^2\overline p \|_{L^1(I, L^\infty(\Omega))} + \| \rho^{st} \|_{L^1(I, L^2(\Omega))}).
	\end{align}
	Since $q_\rho - \overline q$ solves
	\begin{equation*}
		\left\{\begin{aligned}
			&\overline\nu\partial_t^2(q_\rho - \overline q) - \Delta(q_\rho - \overline q) - \eta\partial_t(q_\rho - \overline q) = \sum_{i=1}^{m}a_i(p_\rho - \overline p)- (\nu_\rho - \overline\nu)\partial_t^2\overline q + \rho^{adj} 
			&&\text{in }I\times\Omega
			\\
			&\partial_n (q_\rho - \overline q) = 0 
			&&\text{in }I\times\Gamma_N
			\\
			&q_\rho - \overline q = 0
			&&\text{in }I\times\Gamma_D
			\\
			&(q_\rho - \overline q,\partial_t(q_\rho - \overline q))(T) = (0,0)
			&&\text{in }\Omega,
		\end{aligned}\right.
	\end{equation*}
	\cref{lemma: stampaccia} yields that
	\begin{align*}
		&\|q_\rho - \overline q\|_{L^2(I,L^\infty(\Omega))}
		\\
		&\leq \hat c( \sum_{i=1}^{m}\|a_i\|_{L^\infty(I, L^\infty(\Omega))}(\|p_\rho - \overline p\|_{L^2(I,L^2(\Omega))} + \|\partial_t(p_\rho - \overline p)\|_{L^2(I,L^2(\Omega))}) \!+\! \sum_{i=1}^{m}\|\partial_ta_i\|_{L^\infty(I, L^\infty(\Omega))}\|p_\rho - \overline p\|_{L^2(I,L^2(\Omega))} 
		\\
		&\hspace{.8cm}+ (\|\partial_t^2\overline q\|_{L^2(I,L^\infty(\Omega))} + \|\partial_t^3\overline q\|_{L^2(I,L^\infty(\Omega))} )\|\nu_\rho - \overline\nu\|_{L^2(\Omega)}+ \|\rho^{adj}\|_{L^2(I,L^2(\Omega))} + \|\partial_t\rho^{adj}\|_{L^2(I,L^2(\Omega))}).
	\end{align*}
	Applying \cref{Lipschitzproperties} with $(\widetilde\rho^{st}, \widetilde\rho^{adj}, \widetilde\rho^{vi}) = 0$ and \cref{ieq: alpha3proof}, we obtain \cref{Lipschitzproperties3}.
\end{proof}

With the following lemma, we will abandon the modification $\mathcal V_{ad}^\tau$ of the admissible set $\mathcal V_{ad}$. The proof follows the argumentation from \cite[Corollary 5.3]{wachsmuth07} with a careful modification.

\begin{lemma}\label{lemma: firstorderrho}
	Let \cref{assumption2} and \cref{ssc'} hold. Let $(\rho^{st}, \rho^{adj}, \rho^{VI})\in H^1(I, L^2(\Omega))\times H^1(I, L^2(\Omega))\times L^\infty(\Omega)$ such that $\rho^{st}(0) = \rho^{adj}(T) = 0$ and 
	\begin{equation}\label{ieq: perturbation bound}
		\|\rho^{st}\|_{H^1(I, L^2(\Omega))} + \|\rho^{adj}\|_{H^1(I, L^2(\Omega))} + \|\rho^{VI}\|_{L^\infty(\Omega)}\leq \frac{\tau}{c_L},\quad c_L\coloneqq \max\{L_p\|\partial_t^2\overline q\|_{L^2(I,L^\infty(\Omega))}, L_q\|\partial_t^2\overline p\|_{L^2(I,L^\infty(\Omega))}, 1\}.
	\end{equation}
	Then, the unique solution to \cref{system: linpertOC2} satisfies \cref{system: linpertOC}. 
\end{lemma}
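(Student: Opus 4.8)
The plan is to exploit that the \emph{only} discrepancy between \cref{system: linpertOC2} and \cref{system: linpertOC} lies in the range of the variational inequality: the former tests against $\mathcal V_{ad}^\tau$, the latter against the full set $\mathcal V_{ad}$. Since the two state/adjoint PDE-systems are identical and the \cref{system: linpertOC2}-solution already satisfies $\nu\in\mathcal V_{ad}^\tau\subset\mathcal V_{ad}$, it suffices to upgrade the variational inequality. Abbreviate the reduced gradients
\[
\overline d \coloneqq -\int_0^T \partial_t^2\overline p\,\overline q\,\d t + \lambda\overline\nu,
\qquad
d \coloneqq -\int_0^T \bigl(\partial_t^2\overline p\, q + \partial_t^2(p-\overline p)\,\overline q\bigr)\,\d t + \lambda\nu - \rho^{VI},
\]
so that the variational inequalities in \cref{first order optimality} and \cref{system: linpertOC} read $(\overline d,\tilde\nu-\overline\nu)_{L^2(\Omega)}\ge 0$ for all $\tilde\nu\in\mathcal V_{ad}$ and $(d,\tilde\nu-\nu)_{L^2(\Omega)}\ge 0$ for all $\tilde\nu\in\mathcal V_{ad}$, respectively. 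I would first record the standard pointwise complementarity reformulations; the goal then reduces to verifying, a.e.\ on all of $\Omega$, the complementarity of $d$ against the box $[\nu_-,\nu_+]$ at $\nu$.

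On $\Omega\setminus\mathscr A_\tau(\overline\nu)$ this is immediate: every competitor in $\mathcal V_{ad}^\tau$ coincides with $\overline\nu$ on $\mathscr A_\tau(\overline\nu)$, so the admissible variations $\tilde\nu-\nu$ are supported in $\Omega\setminus\mathscr A_\tau(\overline\nu)$ and range over the full box there, whence the \cref{system: linpertOC2}-inequality already yields the desired pointwise complementarity of $d$ a.e.\ on $\Omega\setminus\mathscr A_\tau(\overline\nu)$. It remains to treat the strongly active set $\mathscr A_\tau(\overline\nu)$, where $\nu=\overline\nu$ and, since $|{-\int_0^T\partial_t^2\overline p\,\overline q\,\d t+\lambda\overline\nu}|>\tau$, the gradient $\overline d$ never vanishes; combined with the complementarity of $\overline\nu$ against $\overline d$ from \cref{first order optimality} this gives the rigid alternative $\overline d>\tau\Rightarrow\overline\nu=\nu_-$ and $\overline d<-\tau\Rightarrow\overline\nu=\nu_+$ (no interior points occur). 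Consequently, once I show $\|d-\overline d\|_{L^\infty(\mathscr A_\tau(\overline\nu))}\le\tau$, the gradient $d$ inherits the sign of $\overline d$ on $\mathscr A_\tau(\overline\nu)$: in the first case $d\ge\overline d-\tau>0$ with $\nu=\nu_-$, in the second $d\le\overline d+\tau<0$ with $\nu=\nu_+$, which is precisely the missing complementarity.

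The crux is therefore the $L^\infty(\mathscr A_\tau(\overline\nu))$ bound on $d-\overline d$. On $\mathscr A_\tau(\overline\nu)$ the term $\lambda(\nu-\overline\nu)$ vanishes, leaving $d-\overline d=-\int_0^T\bigl(\partial_t^2\overline p\,(q-\overline q)+\partial_t^2(p-\overline p)\,\overline q\bigr)\,\d t-\rho^{VI}$. The second summand is dangerous: \cref{Lipschitzproperties2,Lipschitzproperties3} control $\|p-\overline p\|_{L^2(I,L^\infty)}$ and $\|q-\overline q\|_{L^2(I,L^\infty)}$ but \emph{not} $\partial_t^2(p-\overline p)$ — exactly where the loss of regularity would strike. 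The key manoeuvre is to integrate by parts twice in time, using $w(0)=\partial_t w(0)=0$ for $w\coloneqq p-\overline p$ and $\overline q(T)=\partial_t\overline q(T)=0$, to transfer both derivatives onto $\overline q$: $\int_0^T\partial_t^2(p-\overline p)\,\overline q\,\d t=\int_0^T(p-\overline p)\,\partial_t^2\overline q\,\d t$. Estimating pointwise in $x$ with the Cauchy--Schwarz inequality in time, together with $\operatorname{ess\,sup}_{x}\int_0^T|f(t,x)|^2\,\d t\le\|f\|_{L^2(I,L^\infty)}^2$, then yields
\[
\|d-\overline d\|_{L^\infty(\mathscr A_\tau(\overline\nu))}\le \|\partial_t^2\overline p\|_{L^2(I,L^\infty)}\,\|q-\overline q\|_{L^2(I,L^\infty)}+\|p-\overline p\|_{L^2(I,L^\infty)}\,\|\partial_t^2\overline q\|_{L^2(I,L^\infty)}+\|\rho^{VI}\|_{L^\infty(\Omega)}.
\]
Inserting \cref{Lipschitzproperties2,Lipschitzproperties3} with vanishing second perturbation (since $(\overline\nu,\overline p,\overline q)$ solves the unperturbed system) and using the very definition of $c_L$ to absorb the products $L_q\|\partial_t^2\overline p\|_{L^2(I,L^\infty)}$ and $L_p\|\partial_t^2\overline q\|_{L^2(I,L^\infty)}$, the right-hand side is controlled by a multiple of $c_L\bigl(\|\rho^{st}\|_{H^1(I,L^2)}+\|\rho^{adj}\|_{H^1(I,L^2)}+\|\rho^{VI}\|_{L^\infty(\Omega)}\bigr)$, and the smallness hypothesis \cref{ieq: perturbation bound} then forces $\|d-\overline d\|_{L^\infty(\mathscr A_\tau(\overline\nu))}\le\tau$.

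Combining the two regions, $d$ satisfies the pointwise complementarity against $[\nu_-,\nu_+]$ at $\nu$ on all of $\Omega$, which is equivalent to the variational inequality of \cref{system: linpertOC} over the full set $\mathcal V_{ad}$; since the state/adjoint equations coincide and $\nu\in\mathcal V_{ad}^\tau\subset\mathcal V_{ad}$, the triple $(\nu,p,q)$ indeed solves \cref{system: linpertOC}. I expect the integration by parts — and the pointwise-in-$x$, $L^2$-in-$t$ bookkeeping it enables — to be the main obstacle, as it is precisely what circumvents the loss of regularity; calibrating the constants so that the bound lands at exactly $\tau$ is then the routine task reflected in the definition of $c_L$ in \cref{ieq: perturbation bound}.
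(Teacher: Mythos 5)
Your proposal is correct and follows essentially the same route as the paper's proof: split $\Omega$ into $\mathscr A_\tau(\overline\nu)$ and its complement, use the full-box freedom of $\mathcal V_{ad}^\tau$-competitors on $\Omega\setminus\mathscr A_\tau(\overline\nu)$, and on $\mathscr A_\tau(\overline\nu)$ combine $\overline\nu=\nu_\mp$ on $\mathscr A_\tau^\pm(\overline\nu)$ with the stability estimates \cref{Lipschitzproperties2,Lipschitzproperties3} and the smallness bound \cref{ieq: perturbation bound} to show the perturbed reduced gradient inherits the sign of the unperturbed one. Your explicit double integration by parts in time (transferring $\partial_t^2$ from $p-\overline p$ onto $\overline q$ via the vanishing initial/terminal data) is exactly the step the paper performs implicitly when it bounds $\int_0^T\partial_t^2(p_\rho-\overline p)\,\overline q\,\d t$ by $\|p_\rho-\overline p\|_{L^2(I,L^\infty(\Omega))}\|\partial_t^2\overline q\|_{L^2(I,L^\infty(\Omega))}$ in \cref{ieq tau}.
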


\begin{proof}
	Let $(\nu_\rho,p_\rho,q_\rho)$ denote the solution to \cref{system: linpertOC2}. Since the equations in \cref{system: linpertOC} and \cref{system: linpertOC2} coincide, it remains to show that the variational inequality in \cref{system: linpertOC} is valid. By \cref{system: linpertOC2}, it holds that
	\begin{equation}\label{vi:psi01}
		(-\int_0^T\partial_t^2\overline p(t) q_\rho(t) + \partial_t^2(p_\rho(t)-\overline p(t))\overline q(t)\d t + \lambda\nu_\rho, \widetilde\nu - \nu_\rho)_{L^2(\Omega)} \geq (\rho^{VI}, \widetilde\nu - \nu_\rho)_{L^2(\Omega)}\quad \forall\widetilde\nu\in\mathcal V_{ad}^\tau.
	\end{equation}
	By the definition of $\mathcal V_{ad}^\tau$ (see \cref{def: Vad0}) and since $\nu_\rho\in\mathcal V_{ad}^\tau$, it holds for every $\widetilde\nu\in\mathcal V_{ad}^\tau$ that $\widetilde\nu - \nu_\rho = 0$ a.e. in $\mathscr A_{\tau}(\overline\nu)$. As a consequence, \cref{vi:psi01} implies that
	\begin{equation}\label{vi:psi02}
		(-\int_0^T\partial_t^2\overline p(t)q_\rho(t) + \partial_t^2(p_\rho(t)-\overline p(t))\overline q(t)\d t + \lambda\nu_\rho, \widetilde\nu - \nu_\rho)_{L^2(\Omega\setminus\mathscr A_\tau(\overline\nu))} \geq (\rho^{VI}, \widetilde\nu - \nu_\rho)_{L^2(\Omega\setminus\mathscr A_\tau(\overline\nu))}\,\, \forall\widetilde\nu\in\mathcal V_{ad}^\tau.
	\end{equation}
	For every $\nu\in\mathcal V_{ad}$, we set $\widetilde\nu\coloneqq \chi_{\mathscr A_\tau(\overline\nu)}\overline\nu + \chi_{(\Omega\setminus\mathscr A_\tau(\overline\nu))}\nu\in \mathcal V_{ad}^\tau$ in \cref{vi:psi02}. Since $\widetilde\nu$ and $\nu$ coincide in $\Omega\setminus\mathscr A_\tau(\overline\nu)$, it follows that \cref{vi:psi02} holds for every $\nu\in\mathcal V_{ad}$, i.e.,
	\begin{equation}\label{vi:psi02b}
		(-\int_0^T\partial_t^2\overline p(t)q_\rho(t) + \partial_t^2(p_\rho(t)-\overline p(t))\overline q(t)\d t + \lambda\nu_\rho, \nu - \nu_\rho)_{L^2(\Omega\setminus\mathscr A_\tau(\overline\nu))} \geq (\rho^{VI}, \nu - \nu_\rho)_{L^2(\Omega\setminus\mathscr A_\tau(\overline\nu))}\,\, \forall\nu\in\mathcal V_{ad}.
	\end{equation}
	Let $\mathscr A_\tau^+(\overline\nu)\coloneqq \{x\in\Omega : -\int_{0}^{T}\partial_t^2\overline p(t,x)\overline q(t,x)\d t + \lambda\overline\nu(x) >\tau \}$ and $\mathscr A_\tau^-(\overline\nu)\coloneqq \{x\in\Omega : -\int_{0}^{T}\partial_t^2\overline p(t,x)\overline q(t,x)\d t + \lambda\overline\nu(x) <-\tau \}$. Then, by \cref{def:Atau}, it holds that $\mathscr A_\tau(\overline\nu) = \mathscr A_\tau^+(\overline\nu)\cup\mathscr A_\tau^-(\overline\nu)$, and it follows for a.e. $x\in\mathscr A_\tau^+(\overline\nu)$ that
	\begin{align}\label{ieq tau}
		\tau &<-\int_{0}^{T}\partial_t^2\overline p(t,x) \overline q(t,x)\d t + \lambda\overline\nu(x)
		\\
		&\notag= -\int_0^T\partial_t^2\overline p(t,x)q_\rho(t,x) + \partial_t^2(p_\rho(t,x)+\overline p(t,x))\overline q(t,x)\d t + \lambda\nu_\rho(x) - \rho^{VI}(x)
		\\
		&\notag\quad + \int_0^T \partial_t^2\overline p(t,x)(q_\rho(t,x) - \overline q(t,x)) + \partial_t^2(p_\rho(t,x)- \overline p(t,x))\overline q(t,x)\d t + \rho^{VI}(x)
		\\
		&\notag\leq -\int_0^T\partial_t^2\overline p(t,x)q_\rho(t,x) + \partial_t^2(p_\rho(t,x)+\overline p(t,x))\overline q(t,x)\d t + \lambda\nu_\rho(x) - \rho^{VI}(x)
		\\
		&\notag\quad + \|\partial_t^2\overline p\|_{L^2(I,L^\infty(\Omega))}\|q_\rho -\overline q\|_{L^2(I,L^\infty(\Omega))} + \|p_\rho - \overline p\|_{L^2(I,L^\infty(\Omega))}\|\partial_t^2\overline q\|_{L^2(I,L^\infty(\Omega))} + \|\rho^{VI}\|_{L^\infty(\Omega)}
		\\
		&\notag\hspace{-.4cm}\underbrace{\leq}_{\text{Thm.}\,\ref{theorem:lipschitzproperty}} -\int_0^T\partial_t^2\overline p(t,x)q_\rho(t,x) + \partial_t^2(p_\rho(t,x)+\overline p(t,x))\overline q(t,x)\d t + \lambda\nu_\rho(x) - \rho^{VI}(x)
		\\
		&\notag\quad +c_L(\|\rho^{st}\|_{H^1(I, L^2(\Omega))} + \|\rho^{adj}\|_{H^1(I, L^2(\Omega))} + \|\rho^{VI}\|_{L^\infty(\Omega)})
	\end{align}
	where $c_L =\max\{L_p\|\partial_t^2\overline q\|_{L^2(I,L^\infty(\Omega))}, L_q\|\partial_t^2\overline p\|_{L^2(I,L^\infty(\Omega))}, 1\}$. Consequently, we obtain that
	\begin{align}\label{ieq larger}
		0&\underbrace{\leq}_{\cref{ieq: perturbation bound}} \tau - c_L(\|\rho^{st}\|_{H^1(I, L^2(\Omega))} + \|\rho^{adj}\|_{H^1(I, L^2(\Omega))} + \|\rho^{VI}\|_{L^\infty(\Omega)}) 
		\\
		&\notag\underbrace{\leq}_{\cref{ieq tau}} -\int_0^T\partial_t^2\overline p(t,x)q_\rho(t,x) + \partial_t^2(p_\rho(t,x)+\overline p(t,x))\overline q(t,x)\d t + \lambda\nu_\rho(x) - \rho^{VI}(x)\,\,\,\,\,\text{for a.e. }x\in\mathscr A_\tau^+(\overline\nu).
	\end{align}
	Analogously, 
	\begin{equation}\label{ieq smaller}
		0\geq -\int_0^T\partial_t^2\overline p(t,x)q_\rho(t,x) + \partial_t^2(p_\rho(t,x)+\overline p(t,x))\overline q(t,x)\d t + \lambda\nu_\rho(x) - \rho^{VI}(x)\quad\text{for a.e. }x\in\mathscr A_\tau^-(\overline\nu).
	\end{equation}
	On the other hand, by a standard argumentation, due to \cref{first order optimality}, the pointwise inequality
	\begin{align*}
		\left(-\int_0^T\partial_t^2\overline p(t,x) \overline q(t,x)\d t + \lambda\overline\nu(x)\right)(v - \overline\nu(x))\geq 0\quad\text{for all } v\in[\nu_-(x), \nu_+(x)]\text{ and a.e. }x\in\Omega
	\end{align*}
	holds, implying that $\overline\nu = \nu_-$ in $\mathscr A_\tau^+(\overline\nu)$ and $\overline\nu = \nu_+$ in $\mathscr A_\tau^-(\overline\nu)$. Therefore, since $\nu_{\rho} = \overline\nu$ in $\mathscr A_\tau(\overline\nu)$, along with \cref{ieq larger} and \cref{ieq smaller}, we obtain that
	\begin{align}\label{vi:psi03}
			&\left(-\int_0^T\partial_t^2\overline p(t)q_\rho(t) + \partial_t^2(p_\rho(t)-\overline p(t))\overline q(t)\d t + \lambda\nu_\rho - \rho^{VI},\nu-\nu_{\rho}\right)_{L^2(\mathscr A_\tau(\overline\nu))}
			\\\notag
			&=\bigg(\underbrace{-\int_0^T\partial_t^2\overline p(t)q_\rho(t) + \partial_t^2(p_\rho(t)-\overline p(t))\overline q(t)\d t + \lambda\nu_\rho - \rho^{VI}}_{\geq 0\text{ a.e. in $\mathscr A_\tau^+(\overline\nu)$ due to \cref{ieq larger}}},\underbrace{\nu-\nu_-}_{\geq 0\text{ a.e.}}\bigg)_{L^2(\mathscr A_\tau^+(\overline\nu))} 
			\\\notag
			&\quad + \bigg(\underbrace{-\int_0^T\partial_t^2\overline p(t)q_\rho(t) + \partial_t^2(p_\rho(t)-\overline p(t))\overline q(t)\d t + \lambda\nu_\rho - \rho^{VI}}_{\leq 0\text{ a.e. in $\mathscr A_\tau^-(\overline\nu)$ due to \cref{ieq smaller}}},\underbrace{\nu-\nu_+}_{\leq 0\text{ a.e.}}\bigg)_{L^2(\mathscr A_\tau^-(\overline\nu))}\geq 0\quad\forall \nu\in\mathcal V_{ad}.
	\end{align}
	Combining \cref{vi:psi02b} and \cref{vi:psi03} proves the assertion.
\end{proof}

\section{Sequential Quadratic Programming}
\label{section SQP}

The SQP method (cf. \cite[Section 4.11]{troeltzsch10}) approximates \cref{P} by a sequence of coupled systems arising from a suitable linearization process of the optimality system \cref{first order optimality}. This leads to the following algorithm:

\begin{algorithm}[H]
	\caption{Sequential Quadratic Programming}
	\label{alg:sqp}
	\begin{algorithmic}[1]
		\State\label{line1} Choose $(\nu_0,p_0,q_0)$ and set $k = 0$. 
		\State\label{line2} Find $\nu\in\mathcal V_{ad}$ and $p, q\in C^2(I, L^2(\Omega))\cap C^1(I, H^1_D(\Omega))\cap C(I, D(\Delta_{D,N}))$ such that
		\begin{equation}\label{iteration}\tag{$\mathbb P_k$}
			\left\{\begin{aligned}
				&\nu_k\partial_t^2p - \Delta p + \eta\partial_t p = f - (\nu - \nu_k)\partial_t^2 p_k
				&&\text{in }I\times\Omega
				\\
				&\partial_n p = 0
				&&\text{in }I\times\Gamma_N
				\\
				&p = 0
				&&\text{in }I\times\Gamma_D
				\\
				&(p, \partial_t p)(0) = (0,0)
				&&\text{in }\Omega
				\\
				&\nu_k\partial_t^2q - \Delta q - \eta\partial_t q = \sum_{i=1}^m a_i(p - p_i^{ob}) - (\nu - \nu_k)\partial_t^2 q_k
				&&\text{in }I\times\Omega
				\\
				&\partial_n q = 0
				&&\text{in }I\times\Gamma_N
				\\
				&q = 0
				&&\text{in }I\times\Gamma_D
				\\
				&(q, \partial_t q)(T) = (0,0)
				&&\text{in }\Omega
				\\
				&(-\int_0^T\partial_t^2 p_k(t) q(t) + \partial_t^2(p(t)- p_k(t))q_k(t)\d t + \lambda\nu, \tilde\nu - \nu)_{L^2(\Omega)} \geq 0
				&&\text{for all }\tilde\nu\in\mathcal V_{ad},
			\end{aligned}\right.
		\end{equation}
		and set $(\nu_{k+1}, p_{k+1}, q_{k+1})\coloneqq (\nu, p, q)$.
		\State\label{line3} Stop or set $k = k+1$ and go back to step \ref{line2}. 
	\end{algorithmic}
\end{algorithm}

\begin{remark}\label{remark: GE}
	The hyperbolicity and the second-order bilinear character of the PDEs in \cref{iteration} leads to an undesired effect of loss of regularity, causing two major challenges: 
	\begin{enumerate}[(i)]
		\item For a given iterate $(\nu_k,p_k,q_k)\in\mathcal V_{ad}\times C^l(I, L^2(\Omega))\times C^l(I, L^2(\Omega))$ for some $l>2$, the solutions $p_{k+1},q_{k+1}$ to \cref{iteration} are in general only $l-1$-times continuously differentiable. This can be inferred from \cref{lemma: abstract} due to the regularity of the source terms $(\nu - \nu_k)\partial_t^2 p_k,(\nu - \nu_k)\partial_t^2 q_k\in C^{l-2}(I, L^2(\Omega))$ in the PDEs of \cref{iteration}. For this reason, \cref{alg:sqp} is generally only executable for a limited number of iterations. To tackle this issue, we propose using a smooth-in-time regularity condition (see \cref{assumption3}). 
		\item In the parabolic case (cf. \cite{wachsmuth07,troeltzsch99,hintermueller06,hoppe21}), the convergence analysis strongly relies on Robinson's notion of \textit{strong regularity} (see \cite{robinson80}). Unfortunately, the regularity results and estimation for the hyperbolic case (see \cref{lemma: abstract} or \cite[p. 410]{Evans2010}) are weaker than those for the parabolic one. Consequently, the developed strategies for parabolic scenarios cannot be directly transferred to our case and require a substantial extension. 
	\end{enumerate}
\end{remark}

To circumvent (i), we propose the use of smooth-in-time functions with vanishing initial and final conditions:
\begin{align*}\label{defX}
	&X_0\coloneqq \{\psi \in C^\infty(I, L^\infty(\Omega)): \partial_t^l \psi(0) = 0\,\,\text{for all }l\in\mathbb N_0\}
	\\\notag
	&X_T\coloneqq \{\psi \in C^\infty(I, L^\infty(\Omega)): \partial_t^l \psi(T) = 0\,\,\text{for all }l\in\mathbb N_0\}
\end{align*}
with $\mathbb N_0\coloneqq\mathbb N\cup\{0\}$.

\begin{assumption}\label{assumption3}
	Let \cref{assumption2} hold. Furthermore, let $f\in C^\infty(I, L^2(\Omega))$ with $\partial_t^lf(0)= 0$ for all $l\in\mathbb N_0$, let $a_i\in X_T$ for all $i=1,\dots m$, let $p_i^{ob}\in C^\infty(I,L^2(\Omega))$ for all $i=1,\ldots m$, and let $(\nu_0,p_0,q_0)\in\mathcal V_{ad}\times X_0\times X_T$.
\end{assumption}

In view of \cref{assumption3}, \cref{lemma: stampaccia} implies that $\overline p\in X_0$ and $\overline q\in X_T$. Further, in practice, observation data are typically available through measurements at various time points. Accordingly, their usual extrapolations are smooth in time. Therefore, \cref{assumption2} is reasonable since smoothness is only considered in time, whereas the data are allowed to be non-smooth with respect to the space variable.

\begin{theorem}\label{theorem well posedness}
	Let \cref{assumption3} hold. Then, for every $k\in\mathbb N$, the system \cref{iteration} admits at least one solution $(\nu_{k+1}, p_{k+1}, q_{k+1})\in\mathcal V_{ad}\times X_0\times X_T$. In particular, \cref{alg:sqp} is well-defined.
\end{theorem}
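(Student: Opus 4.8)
The plan is to recast \cref{iteration} as the first-order optimality system of an auxiliary quadratic control problem over $\mathcal V_{ad}$ — exactly mirroring the proof of \cref{proposition: well-definedness} with $(\overline\nu,\overline p,\overline q)$ replaced by the current iterate $(\nu_k,p_k,q_k)$ and all perturbations set to zero — and then to invoke the direct method of the calculus of variations. The statement is proved by induction on $k$: the base case is supplied by \cref{assumption3}, which puts $(\nu_0,p_0,q_0)\in\mathcal V_{ad}\times X_0\times X_T$; assuming $(\nu_k,p_k,q_k)\in\mathcal V_{ad}\times X_0\times X_T$, I would produce the next iterate in the same space, so that \cref{alg:sqp} never terminates prematurely.

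The first step addresses the loss of regularity flagged in \cref{remark: GE}(i). For every $\nu\in L^2(\Omega)$ the state equation in \cref{iteration} has source $f-(\nu-\nu_k)\partial_t^2 p_k$; since $f\in C^\infty(I,L^2(\Omega))$ and $p_k\in X_0$ have all time-derivatives vanishing at $t=0$, the Leibniz rule shows this source lies in $C^\infty(I,L^2(\Omega))$ with every time-derivative vanishing at $t=0$. Applying \cref{lemma: abstract} at every differentiation order (admissible because the coefficient $\nu_k$ satisfies $\nu_{\min}\le\nu_k\le\nu_{\max}$) together with \cref{lemma: stampaccia} yields a unique solution $S_k(\nu)\in C^\infty(I,L^2(\Omega))\cap C^\infty(I,H^1_D(\Omega))\cap C^\infty(I,D(\Delta_{D,N}))\cap C^\infty(I,L^\infty(\Omega))$; evaluating the PDE at $t=0$ and differentiating inductively, while inverting the bounded multiplication operator $\nu_k$, gives $\partial_t^lS_k(\nu)(0)=0$ for all $l$, hence $S_k(\nu)\in X_0$. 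The backward-in-time analogue for the adjoint equation — whose source $\sum_i a_i(S_k(\nu)-p_i^{ob})-(\nu-\nu_k)\partial_t^2 q_k$ inherits vanishing time-derivatives at $t=T$ from $a_i\in X_T$ and $q_k\in X_T$, \emph{irrespective} of the behaviour of $S_k(\nu)$ and $p_i^{ob}$ at $T$ — produces $T_k(\nu)\in X_T$. Both $S_k$ and $T_k$ are affine-linear and continuous, and this invariance of $X_0\times X_T$ is precisely the mechanism by which the smoothness hypotheses circumvent the loss of regularity.

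The second step reduces \cref{iteration} to the minimization of
\[
J_k(\nu) \coloneqq \mathcal J(\nu,S_k(\nu)) + \left(-\int_0^T\partial_t^2(S_k(\nu)-p_k)\,q_k\,\d t,\ \nu-\nu_k\right)_{L^2(\Omega)}
\]
over $\mathcal V_{ad}$, whose variational inequality $J_k'(\nu)(\tilde\nu-\nu)\ge0$ coincides with the variational inequality of \cref{iteration} after invoking the adjoint equation for $T_k(\nu)$ — exactly the computation carried out in \cref{proposition: well-definedness}. The new difficulty is that no second-order condition is available at the generic iterate $\nu_k$, so $J_k$ need \emph{not} be convex: its Hessian contains the indefinite term $-2(h\,\partial_t^2\tilde p_h,q_k)_{L^2(I,L^2(\Omega))}$ with $\tilde p_h\coloneqq S_k'(\nu)h$. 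The key observation is that this term equals $-2(h,M_kh)_{L^2(\Omega)}$, where $M_kh\coloneqq\int_0^T q_k\,\partial_t^2\tilde p_h\,\d t$, and that $M_k$ is \emph{compact} on $L^2(\Omega)$: by \cref{lemma: abstract} the map $h\mapsto\partial_t^2\tilde p_h$ is bounded into $C(I,D(\Delta_{D,N}))$ with $\partial_t^3\tilde p_h$ bounded in $C(I,L^2(\Omega))$, so the compact embedding $D(\Delta_{D,N})\hookrightarrow\hookrightarrow L^2(\Omega)$ and the Arzel\`a--Ascoli theorem render $h\mapsto\partial_t^2\tilde p_h$ compact into $C(I,L^2(\Omega))$, whence $M_k$ is compact into $L^2(\Omega)$. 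Consequently $J_k$ is the sum of a convex continuous functional (the data misfit, convex because $a_i\ge0$ and $S_k$ is affine, plus the Tikhonov term) and a weakly continuous quadratic perturbation, and is therefore weakly lower semicontinuous on $L^2(\Omega)$.

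In the final step, since $\mathcal V_{ad}$ is bounded, closed and convex, it is weakly sequentially compact in $L^2(\Omega)$; together with the weak lower semicontinuity of $J_k$, the direct method yields a minimizer $\nu_{k+1}\in\mathcal V_{ad}$. Setting $p_{k+1}\coloneqq S_k(\nu_{k+1})\in X_0$ and $q_{k+1}\coloneqq T_k(\nu_{k+1})\in X_T$, the first-order necessary condition of the minimization — valid even in the non-convex case — is exactly the variational inequality of \cref{iteration}, so $(\nu_{k+1},p_{k+1},q_{k+1})\in\mathcal V_{ad}\times X_0\times X_T$ solves \cref{iteration}. This closes the induction and establishes the well-definedness of \cref{alg:sqp}. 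The main obstacle is the conjunction of the two difficulties from \cref{remark: GE}: one must simultaneously ensure that $S_k$ and $T_k$ preserve the smooth-in-time, vanishing-derivative structure (so that $X_0\times X_T$ is invariant) and that existence survives the possible loss of convexity — the latter being salvaged precisely by the compactness of $M_k$, which converts the non-convex part of $J_k$ into a weakly continuous term.
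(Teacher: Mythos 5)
Your proposal is correct and takes essentially the same approach as the paper: the paper likewise recasts \cref{iteration} as the first-order necessary optimality condition of the auxiliary functional $J_k(\nu)=\mathcal J(\nu,G_k\nu)-((\nu-\nu_k)\partial_t^2(G_k\nu-p_k),q_k)_{L^2(I,L^2(\Omega))}$ over $\mathcal V_{ad}$, proves weak lower semicontinuity of the indefinite bilinear term by a compactness argument, applies the direct method, and recovers $p_{k+1}\in X_0$, $q_{k+1}\in X_T$ from \cref{assumption3} and \cref{lemma: stampaccia} before closing the induction. The only cosmetic difference lies in the compact embedding used for the weak continuity of the non-convex term: the paper uses $C^1(I,H^1_D(\Omega))\hookrightarrow\hookrightarrow C(I,L^2(\Omega))$ via Arzel\`a--Ascoli, whereas you use $D(\Delta_{D,N})\hookrightarrow\hookrightarrow L^2(\Omega)$ combined with Arzel\`a--Ascoli to the same effect.
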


\begin{proof}
	Let $(\nu_k, p_k, q_k)\in\mathcal V_{ad}\times X_0 \times X_T$ be given for some $k\in\mathbb N_0$. By $G_k\colon L^2(\Omega)\to C^3(I, H^1_D(\Omega))$ we denote the affine-linear and continuous solution operator that maps every $\nu$ to the unique solution $p$ to
	\begin{equation*}
		\left\{\begin{aligned}
			&\nu_k\partial_t^2 p - \Delta p + \eta\partial_t p = f - (\nu - \nu_k)\partial_t^2 p_k
			&&\text{in }I\times\Omega
			\\
			&\partial_n p = 0
			&&\text{in }I\times\Gamma_N
			\\
			&p = 0
			&&\text{in }I\times\Gamma_D
			\\
			&(p, \partial_t p)(0) = (0, 0)
			&&\text{in }\Omega.
		\end{aligned}\right.
	\end{equation*}
	Note that the well-definedness of $G_k$ follows from \cref{assumption3} and \cref{lemma: abstract}.
	Making use of $G_k$, we consider the following minimization problem:
	\begin{equation}\label{optimal control problem k}
		\inf_{\nu\in\mathcal V_{ad}} J_k(\nu) \coloneqq \mathcal J(\nu,G_k\nu) - ((\nu - \nu_k)\partial_t^2(G_k\nu - p_k), q_k)_{L^2(I, L^2(\Omega))}
	\end{equation}
	To prove the existence of a minimizer to \cref{optimal control problem k}, it remains to show that $J_k\colon L^2(\Omega)\to \mathbb R$ is lower sequentially semicontinuous. The lower sequential semicontinuity of the first term is obvious since it is convex and continuous. For the second term, we notice that the embedding $C^1(I, H^1_D(\Omega))\hookrightarrow C(I, L^2(\Omega))$ is compact due to the Arzela-Ascoli theorem. Then, along with the continuity and affine-linearity of $\partial_t^2G_k\colon L^2(\Omega)\to C^1(I, H^1_D(\Omega))$, we obtain the following implication: 
	\begin{equation*}
		\nu_n\rightharpoonup\nu\quad\text{weakly in }L^2(\Omega)\quad\Rightarrow\quad(\nu_n\partial_t^2G_k\nu_n, q_k)_{L^2(I, L^2(\Omega))}\to(\nu\partial_t^2G_k\nu, q_k)_{L^2(I, L^2(\Omega))}.
	\end{equation*}
	In conclusion, $J_k$ is lower sequentially semicontinuous, and therefore \cref{optimal control problem k} admits at least one minimizer $\nu_{k+1}\in\mathcal V_{ad}$. On the other hand, the iteration system \cref{iteration} is equivalent to the condition that $J_k'(\nu)(\tilde\nu - \nu)\geq 0$ for every $\tilde\nu\in\mathcal V_{ad}$ which is nothing but the necessary optimality condition to \cref{optimal control problem k}. Therefore, \cref{iteration} admits at least one solution $(\nu_{k+1}, p_{k+1}, q_{k+1})$. Applying \cref{assumption3} and \cref{lemma: stampaccia} to the PDE systems in \cref{iteration} yields $p_{k+1}\in X_0$ and $q_{k+1}\in X_T$. In conclusion, the claim follows inductively. 
\end{proof}

\begin{assumption}\label{assumption4}
	Let \cref{assumption3} and \cref{ssc'} hold. Furthermore, suppose for every $l\in\mathbb N_0$ that
	\begin{align*}
		&\begin{aligned}
			&\|\nu_0 - \overline\nu\|_{L^2(\Omega)}\leq \varepsilon\quad\|\partial_t^l (p_0 - \overline p)\|_{L^2(I,L^\infty(\Omega))} \leq C_0\varepsilon l!^2,
			&&\|\partial_t^l (q_0 - \overline q)\|_{L^2(I,L^\infty(\Omega))} \leq C_0\varepsilon l!^2,
			\\
			&\max\left\{\|\partial_t^lp_{0}\|_{L^2(I,L^\infty(\Omega))},\|\partial_t^lq_{0}\|_{L^2(I,L^\infty(\Omega))} \right\}\leq C_0!^2,
			&&\max\left\{\|\partial_t^l\overline p\|_{L^2(I,L^\infty(\Omega))},\|\partial_t^l\overline q\|_{L^2(I,L^\infty(\Omega))} \right\}\leq \overline C l!^2,
		\end{aligned}
		\\
		&\sum_{i=1}^{m}\|\partial_t^la_i\|_{L^\infty(I,L^\infty(\Omega))}\leq C_a l!^2,\qquad\sum_{i=1}^{m}\|\partial_t^l(a_i(\overline p - p_i^{ob}))\|_{L^2(I,L^2(\Omega))}\leq C_a l!^2,
		\qquad \|\partial_t^lf\|_{L^2(I,L^2(\Omega))}\leq C_f l!^2
	\end{align*}
	and 
	\begin{equation}\label{assumption epsilon}
		\varepsilon\leq\min\Bigg\{\frac{\gamma}{4\delta}, \frac{c_1 \overline \gamma}{8 \delta}, \frac{1}{2 L(2c_1 5!^2 + 8C_0 + 2\sqrt{|\Omega|}5!^2C_0c_1)},\sqrt{\frac{8!^2\gamma^{\sqrt{2}}}{8\delta L(2c_05!^2 + \sqrt{|\Omega|}C_0(4c_03!^2 + 4C_0 +c_05!^2))}}\Bigg\}
	\end{equation}
	hold for some constants $C_f, C_a, C_0, \overline C, \gamma, \overline \gamma >0$ satisfying 
	\begin{align}\label{assumption gammas}
		&\gamma\prod_{l=1}^\infty(3l+5)!^{\sqrt{2}^{6-l}}\eqqcolon\overline\gamma<\min\left\{1,\frac{\delta}{L(4c_1 + 2\sqrt{|\Omega|}c_1^2)}, \frac{2\delta \tau}{c_L(8C_0 + 4\overline C + 3c_1C_0 + c_1\overline C)}\right\},
		\\\label{assumption c constants}
		&2\hat c C_f + \hat cC_0\frac{\overline\gamma}{\delta} \leq C_0,\qquad\qquad \hat c(2C_a + C_acT(\overline C + C_0)\frac{\overline\gamma}{\delta} + C_0\frac{\overline\gamma}{\delta})\leq C_0,
	\end{align}
	where
	\begin{equation}\label{definition delta}
		\delta \coloneqq 2L(2c_1 + 3\sqrt{|\Omega|}c_1^2),\quad c_0 \coloneqq C_0(\hat c\frac{\gamma}{2\delta}(C_acT + 1) + 1), \quad c_1 \coloneqq \hat c(2C_acT(\overline C + C_0) + 2\overline C + 4C_0)
	\end{equation}
	with $\tau, c,\hat c, L>0$ as in \cref{ssc'}, \cref{lemma: abstract}, \cref{lemma: stampaccia}, and \cref{theorem:lipschitzproperty}, respectively.
\end{assumption}

\begin{remark}
	Notice that \cref{assumption gammas}, \cref{assumption c constants}, and \cref{assumption epsilon} can always be guaranteed by choosing $\gamma$ and $\varepsilon$ sufficiently small and $C_0$ sufficiently large. Furthermore, it is straightforward to check that $\prod_{l=1}^\infty(3l+5)!^{\sqrt{2}^{6-l}}$ exists. Indeed, it holds for every $k\in\mathbb N$ that 
	\begin{align*}
		\ln\left(\prod_{l=1}^k(3l+5)!^{\sqrt{2}^{6-l}}\right) 
		&= \sum_{l=1}^k\sqrt{2}^{6-l}\ln((3l+5)!) = \sum_{l=1}^k\sqrt{2}^{6-l}\sum_{s=1}^{3l+5}\ln(s)\leq \sum_{l=1}^k\sqrt{2}^{6-l}\sum_{s=1}^{3l+5}s 
		\\
		&= \sum_{l=1}^k\sqrt{2}^{6-l}\frac{(3l+5)(3l+6)}{2} \eqqcolon \sum_{l=1}^k d_l
	\end{align*}
	and 
	\begin{equation*}
		\left|\frac{d_{l+1}}{d_l}\right| = \frac{\sqrt{2}^{5-l}\frac{(3l+8)(3l+9)}{2}}{\sqrt{2}^{6-l}\frac{(3l+5)(3l+6)}{2}} = \sqrt{2}^{-1}\frac{(3l+8)(3l+9)}{(3l+5)(3l+6)}\leq \sqrt{2}^{-1}\frac{(12+8)(12+9)}{(12+5)(12+6)}<1\quad\forall l\geq 4.
	\end{equation*}
	Thus, by the ratio test, $\{\sum_{l=1}^kd_l\}_{k\in\mathbb N}$ is convergent. Consequently, as $\exp\in C(\mathbb R)$, the limit $\prod_{l=1}^\infty(3l+5)!^{\sqrt{2}^{6-l}}$ exists.
\end{remark}

\begin{remark} 
	To justify \cref{assumption4}, we provide an example for a smooth function $f \in C^\infty(\mathbb R)$ satisfying 
	\begin{equation*}
		\partial_t^l f(0)=0 \quad \text{and} \quad \|\partial_t^l f\|_{L^2(0,1)}\leq {C_{f}} l!^2 \quad \forall l \in \mathbb N_0	
	\end{equation*}
	with a positive constant $C_f$, independent of $l$. Let $m \ge 2$ be an integer and 
	\begin{equation*}
		f_m(t) \coloneqq\left\{\begin{aligned} 
			&e^{-t^{-m}} &\text{ if } t > 0,
			\\
			&0 &\text{ if } t \leq 0.
		\end{aligned}\right.	
	\end{equation*}
	By definition, $f_m \in C^\infty(\mathbb R)$ satisfies $\partial_t^lf_m(0)=0$ for all $l \in \mathbb N_0$. Moreover, as shown in the proof of \cite[Lemma 2]{israel}, it holds that
	\begin{equation*}
		|\partial_t^l f_m (t)| \leq (8m)^l l^{(1+\frac{1}{m})l} \quad \forall t \in \mathbb R \quad \forall l \in \mathbb N.	
	\end{equation*}
	Applying the Stirling estimate 
	\begin{equation}\label{Stirling0} 
		l^l e^{-l} \leq l! \quad \forall l \in \mathbb N,
	\end{equation}
	it follows that 
	\begin{equation}\label{Stirling1} 
		|\partial_t^l f_m (t)| \leq (e^{1+ \frac{1}{m}}8m)^l l!^{1+\frac{1}{m}}\quad \forall t \in \mathbb R \quad \forall l \in \mathbb N.
	\end{equation}
	Let us choose a fixed integer ${L}_m \in \mathbb N$ such that 
	\begin{align*}
		e^{1+ \frac{1}{m}}8m \leq {L}_m^{\frac{1}{m}} e^{-{\frac{1}{m}}} \quad \Rightarrow \quad (e^{1+ \frac{1}{m}}8m)^l \leq (l^{\frac{1}{m}} e^{-{\frac{1}{m}}})^l \quad \forall l \ge {L}_m \underbrace{\Rightarrow}_{\cref{Stirling0}} \quad
		&(e^{1+ \frac{1}{m}}8m)^l \leq l!^{\frac{1}{m}} \quad \forall l \ge {L}_m 
		\\
		{\Rightarrow}\quad 
		&(e^{1+ \frac{1}{m}}8m)^l \leq C_m l!^{\frac{1}{m}} \quad \forall l \in \mathbb N
	\end{align*}
	with $C_m\coloneqq(e^{1+ \frac{1}{m}}8m)^{L_m}$. Applying the above inequality to \cref{Stirling1} finally yields
	\begin{equation*}
		|\partial_t^l f_m (t)| \leq C_m l!^{1+\frac{2}{m}} \underbrace{\le}_{m \ge 2} C_m l!^{2} \quad \forall t \in \mathbb R \quad \forall l \in \mathbb N.	
	\end{equation*}
\end{remark}

\begin{remark} 
	Let us explain our subsequent strategy and how \cref{assumption4} comes into play. In the following, we introduce two operators $S_k$ and $T_k$. The first one represents the solution operator for the perturbed system \cref{system: linpertOC2} with perturbation terms \cref{perturbations}, whereas $T_k$ is related to the solution operator for the PDE-systems in \cref{iteration}. In \cref{lemma: estimates}, we derive Lipschitz estimates for $T_k$ by invoking \cref{lemma: abstract,lemma: stampaccia} and the two conditions in \cref{assumption c constants} from \cref{assumption4}. Here, the two constants $c_0$ and $c_1$ from \cref{assumption4} serve precisely as the corresponding Lipschitz constants. Finally, with the help of $S_k, T_k$, the perturbation analysis (\cref{theorem:lipschitzproperty,lemma: firstorderrho}), and the associated Lipschitz estimates (\cref{lemma: estimates}), we prove in \cref{proposition contraction,proposition uniqueness} that \cref{iteration} admits a unique solution in $ \mathcal V_{ad}^\tau \times X_0 \times X_T$ satisfying certain properties that are vital for our convergence analysis. Our proofs for \cref{proposition contraction,proposition uniqueness} exactly rely on the use of the two smallness conditions \cref{assumption epsilon} and \cref{assumption gammas} from \cref{assumption4}.
\end{remark}

Associated with a given $(\nu_k, p_k, q_k)\in \mathcal V_{ad} \times X_0 \times X_T$ for some $k\in\mathbb N_0$, we introduce the mapping
\begin{align*}
	S_{k} \colon L^2(\Omega)\times X_0\times X_T&\to L^2(\Omega)\times X_0\times X_T,\quad (\hat\nu, \hat p, \hat q) \mapsto (\nu, p, q)
\end{align*}
that assigns to every $(\hat\nu, \hat p, \hat q)\in L^2(\Omega)\times X_0\times X_T$ the solution $(\nu, p, q)$ to 
\begin{equation}\label{system: Sk}
	\left\{\begin{aligned}
		&\overline\nu\partial_t^2 p - \Delta p + \eta\partial_t p = f - (\nu_k - \overline\nu)\partial_t^2\hat p - (\hat\nu - \nu_k)\partial_t^2p_k - (\nu - \hat\nu)\partial_t^2\overline p
		&&\text{in }I\times\Omega
		\\
		&\partial_np = 0 
		&&\text{on }I\times\Gamma_N
		\\
		&p = 0 
		&&\text{on }I\times\Gamma_D
		\\
		&(p, \partial_t p)(0) = (0, 0)
		&&\text{in }\Omega
		\\
		&\overline\nu\partial_t^2 q - \Delta q - \eta\partial_t q = \sum_{i=1}^m a_i(p - p_i^{ob}) - (\nu_k - \overline\nu)\partial_t^2\hat q - (\hat\nu - \nu_k)\partial_t^2q_k - (\nu - \hat\nu)\partial_t^2\overline q
		&&\text{in }I\times\Omega
		\\
		&\partial_nq = 0
		&&\text{on }I\times\Gamma_N
		\\
		&q = 0
		&&\text{on }I\times\Gamma_D
		\\
		&(q, \partial_t q)(T) = (0, 0)
		&&\text{in }\Omega
		\\
		&\nu\in\mathcal V_{ad}^\tau, \quad (-\int_I\partial_t^2\overline p(t) q(t) + \partial_t^2(p_k(t)-\overline p(t))\hat q(t) + \partial_t^2(\hat p(t) - p_k(t))q_k(t) 
		\\
		&\hspace{4.3cm} + \partial_t^2(p(t) - \hat p(t))\overline q(t)\d t + \lambda\nu, \tilde \nu - \nu)_{L^2(\Omega)}\geq 0
		&&\text{for all }\tilde \nu\in\mathcal V_{ad}^\tau.
	\end{aligned}\right.
\end{equation}

\begin{remark}\label{remark Sk}
	The system \cref{system: Sk} is nothing but \cref{system: linpertOC2} with the perturbation terms
	\begin{align}\label{perturbations}
		\rho^{st}
		&= - (\nu_k - \overline\nu)\partial_t^2\hat p - (\hat\nu - \nu_k)\partial_t^2p_k - (\overline\nu - \hat\nu)\partial_t^2\overline p
		&&\in H^1(I, L^2(\Omega))
		\\\notag
		\rho^{adj}
		&= - (\nu_k - \overline\nu)\partial_t^2\hat q - (\hat\nu - \nu_k)\partial_t^2q_k - (\overline\nu - \hat\nu)\partial_t^2\overline q
		&&\in H^1(I, L^2(\Omega))
		\\\notag
		\rho^{VI}
		&=\int_I\partial_t^2(p_k(t)-\overline p(t))\hat q(t) + \partial_t^2(\hat p(t) - p_k(t))q_k(t) + \partial_t^2(\overline p(t) - \hat p(t))\overline q(t)\d t
		&&\in L^2(\Omega)
	\end{align}
	satisfying $\rho^{st}(0) = \rho^{adj}(T) = 0$ such that the well-definedness of $S_{k}$ follows by \cref{proposition: well-definedness} and \cref{lemma: stampaccia}.
\end{remark}

We aim to show that $S_{k}$ under certain assumptions admits a unique fixed point. According to \cref{iteration}, every fixed point to $S_k$ exactly solves the iteration in \cref{alg:sqp} with $\mathcal V_{ad}^\tau$ instead of $\mathcal V_{ad}$. Unfortunately, due to the nature of the hyperbolic PDEs and the second-order time derivatives in the source terms, $S_k$ cannot be defined as a self-map in an appropriate Banach space (see \cref{lemma: abstract}). As a consequence, the contraction principle is not applicable directly to $S_k$. As mentioned in the introduction, we establish a suitable self-map to overcome this issue. First, we define for every $k\in\mathbb N_0$ the mapping
\begin{equation}\label{def Tk}
	T_{k}\colon L^2(\Omega) \to L^2(\Omega)\times X_0\times X_T, \quad \hat\nu \mapsto(\hat\nu, \hat p, \hat q)
\end{equation}
where $\hat p, \hat q$ solve
\begin{align}\label{system: statetk}
	&\left\{\begin{aligned}
		&\nu_k\partial_t^2\hat p - \Delta \hat p + \eta\partial_t \hat p = f - (\hat \nu - \nu_k)\partial_t^2 p_k
		&&\text{in }I\times\Omega
		\\
		&\partial_n\hat p = 0
		&&\text{on }I\times\Gamma_N
		\\
		&\hat p = 0
		&&\text{on }I\times\Gamma_D
		\\
		&(\hat p, \partial_t \hat p)(0) = (0,0)
		&&\text{in }\Omega
	\end{aligned}\right.
\end{align}
and
\begin{align}\label{system: adjointtk}
	&\left\{\begin{aligned}
		&\nu_k\partial_t^2\hat q - \Delta \hat q - \eta\partial_t \hat q = \sum_{i=1}^m a_i(\hat p - p_i^{ob}) - (\hat \nu - \nu_k)\partial_t^2 q_k
		&&\text{in }I\times\Omega
		\\
		&\partial_n\hat q = 0
		&&\text{on }I\times\Gamma_N
		\\
		&\hat q = 0
		&&\text{on }I\times\Gamma_D
		\\
		&(\hat q, \partial_t \hat q)(T) = (0,0)
		&&\text{in }\Omega.
	\end{aligned}\right.
\end{align}
Note that \cref{system: statetk} and \cref{system: adjointtk} correspond to the PDE-systems in \cref{iteration}.
The well-definedness of $T_{k}$ follows from \cref{lemma: stampaccia}. Then, the desired self-mapping operator reads
\begin{equation}\label{restriction}
	(I_{\nu}\circ S_{k}\circ T_{k})\colon L^2(\Omega)\to L^2(\Omega)\quad\text{with}\quad I_{\nu}\colon(\nu,p,q)\mapsto\nu.
\end{equation}
As we will see later, the operator \cref{restriction} is constructed suitably such that, under certain assumptions, it admits a unique fixed point $\nu_{k+1}$. Furthermore, $T_{k}(\nu_{k+1})$ is the fixed point of $S_{k}$ and solves the iteration \cref{iteration}. To prove these results, let us start with the following auxiliary lemmata:

\begin{lemma}\label{lemma sequences}
	Let $\gamma>0$ such that
	\begin{equation}\label{assumption gamma}
		\gamma\prod_{l=1}^\infty(3l+5)!^{\sqrt{2}^{6-l}}\eqqcolon\overline\gamma\in(0,1).
	\end{equation}
	Then, the sequence $\{b_k\}_{k\in\mathbb N_0}\subset\mathbb R_+$ defined by 
	\begin{equation*}
		b_0\coloneqq \gamma,\quad b_k\coloneqq \prod_{l=1}^{k}(3l+5)!^{\sqrt{2}^{2+k-l}}\gamma^{\sqrt{2}^k}\quad\forall k\in\mathbb N
	\end{equation*}
	is decreasing and converges R-superlinearly to $0$. Furthermore, it holds that 
	\begin{align}\label{bk overline gamma}
		b_k\leq\overline\gamma^{\sqrt{2}^k}&\leq\overline\gamma
		&&\forall k\in\mathbb N_0
		\\\label{ieq gamma bar}
		(3k+5)!^4b_{k-1}&\leq \overline\gamma
		&&\forall k\in\mathbb N.
	\end{align}
	Additionally, suppose that $\{x_n\}_{n\in\mathbb N_0}\subset\mathbb R_+$ satisfies for some $k\in\mathbb N$ and some $\delta>0$ that
	\begin{equation}\label{assumption x}
		x_{k-1}\leq \frac{1}{4\delta}b_{k-1},\quad x_k\leq \frac{1}{4\delta}b_k,\quad x_{k+1}\leq \delta(3k+5)!^4(x_k + x_{k-1})^2.
	\end{equation}
	Then
	\begin{equation}\label{claim xk}
		x_{k+1}\leq \frac{1}{4\delta}b_{k+1}.
	\end{equation}
\end{lemma}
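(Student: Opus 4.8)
The whole lemma rests on a single one-step recursion for the sequence $\{b_k\}$. Directly from the definition one checks $b_{k-1}^{\sqrt{2}}=\gamma^{\sqrt{2}^{k}}\prod_{l=1}^{k-1}(3l+5)!^{\sqrt{2}^{2+k-l}}$, so that $b_k=(3k+5)!^{2}\,b_{k-1}^{\sqrt{2}}$ for every $k\in\mathbb{N}$. Passing to logarithms and writing $A_l\coloneqq\ln((3l+5)!)\ge 0$, $g\coloneqq\ln\gamma$, $B_k\coloneqq\sum_{l=1}^{k}\sqrt{2}^{-l}A_l$ and $B\coloneqq\sum_{l=1}^{\infty}\sqrt{2}^{-l}A_l$ (finite by the ratio-test computation of the preceding remark, since $B=\tfrac18\sum_{l=1}^\infty\sqrt{2}^{6-l}A_l$), the recursion unrolls to the closed form $\ln b_k=\sqrt{2}^{k}(g+2B_k)$; moreover $\sqrt{2}^{6}=8$ gives $\ln\overline\gamma=g+8B$. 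Every assertion will be read off from these two identities together with $0\le B_k\le B$ and $\ln\overline\gamma<0$ (which is \cref{assumption gamma}).

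The bound \cref{bk overline gamma} is then immediate: $\ln b_k\le\sqrt{2}^{k}\ln\overline\gamma$ is equivalent to $g+2B_k\le g+8B$, i.e.\ $B_k\le 4B$, which holds because $B_k\le B$; and $\overline\gamma^{\sqrt{2}^{k}}\le\overline\gamma$ follows from $\overline\gamma\in(0,1)$ and $\sqrt{2}^{k}\ge 1$. R-superlinear convergence is a by-product: $\{\overline\gamma^{\sqrt{2}^{k}}\}$ majorizes $\{b_k\}$ and its successive quotients $\overline\gamma^{\sqrt{2}^{k}(\sqrt{2}-1)}$ tend to $0$, so the majorant converges Q-superlinearly and hence $b_k\to 0$ R-superlinearly. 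For \cref{ieq gamma bar} I would rewrite $(3k+5)!^{4}b_{k-1}\le\overline\gamma$ as $4A_k+\sqrt{2}^{k-1}(g+2B_{k-1})\le g+8B$, and after inserting $g=\ln\overline\gamma-8B$ this reads $4A_k-\sqrt{2}^{k-1}(8B-2B_{k-1})\le(\ln\overline\gamma)(1-\sqrt{2}^{k-1})$. The right-hand side is $\ge 0$ for $k\ge 1$, while $8B-2B_{k-1}\ge 6B$ bounds the left-hand side by $4A_k-6B\sqrt{2}^{k-1}$; since $B\ge\sqrt{2}^{-k}A_k$ one has $6B\sqrt{2}^{k-1}\ge 3\sqrt{2}\,A_k\ge 4A_k$ because $3\sqrt{2}>4$, so the left-hand side is nonpositive and the inequality follows.

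Monotonicity is the delicate point and the place I expect to spend the most effort. From the recursion, $\ln b_k-\ln b_{k-1}=2A_k+(\sqrt{2}-1)\ln b_{k-1}$, so $b_k\le b_{k-1}$ is equivalent to $2A_k\le(\sqrt{2}-1)\sqrt{2}^{k-1}\bigl(8B-2B_{k-1}-\ln\overline\gamma\bigr)$. Discarding the nonnegative term $-\ln\overline\gamma$ and using $8B-2B_{k-1}\ge 6B$, it suffices to show $2A_k\le 6(\sqrt{2}-1)B\,\sqrt{2}^{k-1}$. The crude bound $B\ge\sqrt{2}^{-k}A_k$ only produces the coefficient $6-3\sqrt{2}\approx 1.76<2$ and is therefore insufficient; the key is to keep the first two terms $B\ge\sqrt{2}^{-k}A_k+\sqrt{2}^{1-k}A_{k-1}$, which for $k\ge 2$ reduces the claim to an elementary estimate of the form $A_k\le c\,A_{k-1}$ with $c=6(\sqrt{2}-1)/(3\sqrt{2}-4)>1$, valid because $A_k/A_{k-1}=\ln((3k+5)!)/\ln((3k+2)!)$ is bounded (it tends to $1$). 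The case $k=1$ is treated separately: there $B_0=0$, the full factor $8B$ is available, and $(8-4\sqrt{2})A_1\ge 2A_1$ closes the estimate. This small-$k$ bookkeeping, not any structural difficulty, is the main obstacle.

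It remains to deduce \cref{claim xk}. Using $x_k\le\frac{1}{4\delta}b_k$, $x_{k-1}\le\frac{1}{4\delta}b_{k-1}$ together with the monotonicity just established, $(x_k+x_{k-1})^{2}\le\frac{1}{16\delta^{2}}(b_k+b_{k-1})^{2}\le\frac{1}{4\delta^{2}}b_{k-1}^{2}$, whence \cref{assumption x} gives $x_{k+1}\le\delta(3k+5)!^{4}(x_k+x_{k-1})^{2}\le\frac{1}{4\delta}(3k+5)!^{4}b_{k-1}^{2}$. The proof is finished once $(3k+5)!^{4}b_{k-1}^{2}\le b_{k+1}$ is verified, and this follows from applying the recursion twice: $b_{k+1}=(3k+8)!^{2}(3k+5)!^{2\sqrt{2}}b_{k-1}^{2}$, so that $b_{k+1}/\bigl((3k+5)!^{4}b_{k-1}^{2}\bigr)=(3k+8)!^{2}(3k+5)!^{2\sqrt{2}-4}\ge(3k+5)!^{2\sqrt{2}-2}\ge 1$ because $(3k+8)!\ge(3k+5)!$ and $2\sqrt{2}>2$. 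Hence $x_{k+1}\le\frac{1}{4\delta}b_{k+1}$, which is \cref{claim xk}.
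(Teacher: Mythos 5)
Your proof is correct and, while it rests on the same one-step recursion $b_k=(3k+5)!^{2}b_{k-1}^{\sqrt{2}}$ that drives the paper's proof, the two key inequalities are established by genuinely different means. The paper stays multiplicative: for monotonicity it raises $(3k+5)!^{2}b_{k-1}^{\sqrt{2}-1}$ to the power $\sqrt{2}+1$, so that the exponent of $b_{k-1}$ rationalizes to $1$, and then absorbs the factorial into the infinite product defining $\overline\gamma$, obtaining $\bigl((3k+5)!^{2}b_{k-1}^{\sqrt{2}-1}\bigr)^{\sqrt{2}+1}\le\overline\gamma^{\sqrt{2}^{k-1}}<1$; the same computation delivers \cref{ieq gamma bar} as a by-product. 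You instead linearize by taking logarithms, derive the closed form $\ln b_k=\sqrt{2}^{k}(\ln\gamma+2B_k)$ together with $\ln\overline\gamma=\ln\gamma+8B$, and run a budget argument in which the surplus $8B-2B_{k-1}\ge 6B$ must dominate $2A_k$ (monotonicity) resp.\ $4A_k$ (for \cref{ieq gamma bar}). The trade-off sits exactly where you flagged it: the one-term bound $B\ge\sqrt{2}^{-k}A_k$ is insufficient for monotonicity, so you need the two-term bound and an additional ratio estimate $A_k\le(6+3\sqrt{2})A_{k-1}$, neither of which the paper's power trick requires. What your route buys in exchange: \cref{ieq gamma bar} comes out uniformly for all $k\ge1$ without case distinction, and your double-recursion identity $b_{k+1}=(3k+8)!^{2}(3k+5)!^{2\sqrt{2}}b_{k-1}^{2}$ settles \cref{claim xk} in one stroke, whereas the paper treats $k=1$ and $k\ge2$ separately.

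One step should be tightened. You justify $A_k\le \frac{6(\sqrt{2}-1)}{3\sqrt{2}-4}A_{k-1}=(6+3\sqrt{2})A_{k-1}$ only by remarking that the ratio $\ln((3k+5)!)/\ln((3k+2)!)$ ``is bounded (it tends to $1$)''. Convergence to $1$ does not by itself give a bound valid at every finite $k\ge2$; you should state one explicitly. This is easy to do: for $k\ge2$ each of the factors $3k+3$, $3k+4$, $3k+5$ is at most $6k$, so $(3k+3)(3k+4)(3k+5)\le 8(3k)^{3}\le 8(3k)(3k+1)(3k+2)\le(3k+2)!$ because $(3k-1)!\ge 5!\ge 8$; hence $(3k+5)!\le\bigl((3k+2)!\bigr)^{2}$, i.e.\ $A_k\le 2A_{k-1}$, which is far below the required threshold $6+3\sqrt{2}\approx 10.24$. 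With that line inserted, your proof is complete.
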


\begin{proof}
	First, we notice that \cref{assumption gamma} implies that $b_0=\gamma\leq\overline\gamma$. Furthermore, for every $k\in\mathbb N$, we have that
	\begin{equation*}
		b_k = \prod_{l=1}^{k}(3l+5)!^{\sqrt{2}^{2+k-l}}\gamma^{\sqrt{2}^k} = \left(\prod_{l=1}^{k}(3l+5)!^{\sqrt{2}^{2-l}}\gamma\right)^{\sqrt{2}^k}\underbrace{\leq}_{\cref{assumption gamma}}\overline\gamma^{\sqrt{2}^k}\quad\forall k\in\mathbb N,
	\end{equation*}
	and therefore \cref{bk overline gamma} is valid. Since $\{\overline\gamma^{\sqrt{2}^k}\}_{k\in\mathbb N_0}$ converges (Q-)superlinearly to $0$, the sequence $\{b_k\}_{k\in\mathbb N_0}$ converges R-superlinearly to $0$. To prove the monotonicity, notice that $b_1 = 8!^2\gamma^{\sqrt{2}} = 8!^2b_0^{\sqrt{2}-1}b_0$ and 
	\begin{equation*}
		b_k = (3k+5)!^2\left(\prod_{l=1}^{k-1}(3l+5)!^{\sqrt{2}^{1+k-l}}\gamma^{\sqrt{2}^{k-1}}\right)^{\sqrt{2}} = (3k+5)!^2b_{k-1}^{\sqrt{2}} = (3k+5)!^2b_{k-1}^{\sqrt{2}-1}b_{k-1}\quad\forall k\geq 2.
	\end{equation*}
	Thus, $\{b_k\}_{k\in\mathbb N_0}$ is decreasing if we can show that $(3k+5)!^2b_{k-1}^{\sqrt{2}-1}\in (0,1)$ for all $k\in\mathbb N$. Indeed, this holds since
	\begin{equation*}
		0<\left(8!^2b_0^{\sqrt{2}-1}\right)^{\sqrt{2}+1}\leq 8!^{\sqrt{2}^5}b_0 = 8!^{\sqrt{2}^5}\gamma\underbrace{\leq}_{\cref{assumption gamma}}\overline\gamma<1 
	\end{equation*}
	and
	\begin{align}\label{bk estimate}
		0&<\left((3k+5)!^2b_{k-1}^{\sqrt{2}-1}\right)^{\sqrt{2}+1}\leq (3k+5)!^{\sqrt{2}^5}b_{k-1} = (3k+5)!^{\sqrt{2}^5}\prod_{l=1}^{k-1}(3l+5)!^{\sqrt{2}^{1+k-l}}\gamma^{\sqrt{2}^{k-1}}
		\\\notag
		&\leq \prod_{l=1}^{k}(3l+5)!^{\sqrt{2}^{5+k-l}}\gamma^{\sqrt{2}^{k-1}} = \left(\prod_{l=1}^{k}(3l+5)!^{\sqrt{2}^{6-l}}\gamma\right)^{\sqrt{2}^{k-1}}\underbrace{\leq}_{\cref{assumption gamma}}\overline\gamma^{\sqrt{2}^{k-1}}< 1\quad\forall k\geq 2.
	\end{align}
	The claim \cref{ieq gamma bar} for $k=1$ follows immediately from \cref{assumption gamma}. For $k\geq 2$, the claim \cref{ieq gamma bar} is obtained as follows:
	\begin{equation*}
		(3k+5)!^4b_{k-1}\leq(3k+5)!^{\sqrt{2}^5}b_{k-1}\underbrace{\leq}_{\cref{bk estimate}}\overline\gamma^{\sqrt{2}^{k-1}}\leq\overline\gamma\quad\forall k\geq 2.
	\end{equation*}
	Now, suppose that $\{x_n\}_{n\in\mathbb N_0}\subset\mathbb R_+$ satisfies \cref{assumption x} for some $k\in\mathbb N$ and some $\delta>0$. If $k=1$, using the monotonicity $b_1\leq b_0$, we obtain that
	\begin{align*}
		x_2\underbrace{\leq}_{\cref{assumption x}} \delta8!^4(x_1 + x_0)^2\leq \frac{1}{16\delta}8!^4(b_1 + b_0)^2\leq \frac{1}{4\delta}8!^4b_0^2=\frac{1}{4\delta}8!^4\gamma^2\leq \frac{1}{4\delta}8!^{\sqrt{2}^3}11!^2\gamma^2 = \frac{1}{4\delta}b_2.
	\end{align*}
	If $k\geq 2$, again using the monotonicity $b_k\leq b_{k-1}$, we obtain that
	\begin{align*}
		x_{k+1}
		&\underbrace{\leq}_{\cref{assumption x}} \delta(3k+5)!^4(x_k + x_{k-1})^2\leq \frac{1}{16\delta}(3k+5)!^4(b_k + b_{k-1})^2\leq \frac{1}{4\delta}(3k+5)!^4b_{k-1}^2
		\\
		&= \frac{1}{4\delta}(3k+5)!^4\prod_{l=1}^{k-1}(3l+5)!^{\sqrt{2}^{3+k-l}}\gamma^{\sqrt{2}^{k+1}}\leq \frac{1}{4\delta}\prod_{l=1}^{k+1}(3l+5)!^{\sqrt{2}^{3+k-l}}\gamma^{\sqrt{2}^{k+1}}= \frac{1}{4\delta}b_{k+1}.
	\end{align*}
	This completes the proof. 
\end{proof}

\begin{lemma}\label{lemma: estimates}
	Let \cref{assumption4} hold. Then, for $\nu,\tilde\nu\in\mathcal V_{ad}$, $(\overline\nu,\check p, \check q)\coloneqq T_0(\overline\nu)$, $(\nu,p, q)\coloneqq T_0(\nu)$, and $(\tilde\nu,\tilde p, \tilde q)\coloneqq T_0(\tilde\nu)$, it holds for all $l\in\mathbb N_0$ that 
	\begin{align}
		\label{ieq 1}
		&\|\partial_t^l (\check p - p_0)\|_{L^2(I,L^\infty(\Omega))}\leq c_0(l+3)!^2\varepsilon
		\\\label{ieq 1b}
		&\|\partial_t^l (\check q - q_0)\|_{L^2(I,L^\infty(\Omega))} \leq c_0(l+3)!^2\varepsilon
		\\\label{ieq 3}
		&\|\partial_t^l ( p - \tilde p)\|_{L^2(I,L^\infty(\Omega))}\leq c_1(l+3)!^2\|\nu - \tilde\nu\|_{L^2(\Omega)}
		\\\label{ieq 3b}
		&\|\partial_t^l (q - \tilde q)\|_{L^2(I,L^\infty(\Omega))}\leq c_1(l+3)!^2\|\nu - \tilde\nu\|_{L^2(\Omega)}
	\end{align} 
	with $c_0, c_1>0$ as in \cref{assumption4}. Let additionally $\nu_k$ and $(\nu_{k-1}, p_{k-1}, q_{k-1})$ for some $k\in\mathbb N$ be given such that
	\begin{equation}\label{assumptionk}\tag{$\text{A}_k$}
		\left\{\begin{aligned}
			&\nu_k,\nu_{k-1}\in \mathcal V_{ad}, p_{k-1}\in X_0, q_{k-1}\in X_T
			\\
			&\max\{\|\partial_t^lp_{k-1}\|_{L^2(I,L^\infty(\Omega))}, \|\partial_t^lq_{k-1}\|_{L^2(I,L^\infty(\Omega))}\} \leq C_0(l+3k-3)!^2 \quad\forall l\in\mathbb N_0
			\\
			&\|\nu_k - \overline\nu\|_{L^2(\Omega)}\leq\frac{1}{4\delta}b_k,\quad \|\nu_{k-1} - \overline\nu\|_{L^2(\Omega)}\leq\frac{1}{4\delta}b_{k-1}
		\end{aligned}\right.
	\end{equation}
	with $C_0,\delta>0$ as in \cref{assumption4} and $b_k, b_{k-1}$ as in \cref{lemma sequences}. Then, for $\nu,\tilde\nu\in\mathcal V_{ad}$, $(\nu,p, q)\coloneqq T_{k}(\nu)$, $(\tilde\nu,\tilde p, \tilde q)\coloneqq T_{k}(\tilde\nu)$, and $p_k\in X_0, q_k\in X_T$ being the unique solutions to
	\begin{align}\label{iterationstate}
		&\begin{aligned}
			&\left\{\begin{aligned}
				&\nu_{k-1}\partial_t^2 p_k -\Delta p_k + \eta\partial_t p_k = f -(\nu_k - \nu_{k-1})\partial_t^2 p_{k-1} 
				&&\text{in } I\times \Omega
				\\
				&\partial_n p_k = 0 
				&&\text{on }I\times\Gamma_N
				\\
				&p_k = 0 
				&&\text{on }I\times\Gamma_D	
				\\
				&(p_k, \partial_t p_k)(0) = (0, 0) 
				&&\text{in } \Omega
			\end{aligned}\right.
		\end{aligned}
		\\\label{iterationadjoint}
		&\begin{aligned}
			&\left\{\begin{aligned}
				&\nu_{k-1}\partial_t^2 q_k -\Delta q_k - \eta\partial_t q_k = \sum_{i=1}^m a_i(p_k - p_i^{ob}) -(\nu_k - \nu_{k-1})\partial_t^2 q_{k-1} 
				&&\text{in } I\times \Omega
				\\
				&\partial_n q_k = 0 
				&&\text{on }I\times\Gamma_N
				\\
				&q_k = 0 
				&&\text{on }I\times\Gamma_D	
				\\
				&(q_k, \partial_t q_k)(T) = (0, 0) 
				&&\text{in } \Omega,
			\end{aligned}\right.
		\end{aligned}
	\end{align}
	it holds that 
	\begin{equation}\label{pk claim}
		\max\{\|\partial_t^lp_k\|_{L^2(I,L^\infty(\Omega))},\|\partial_t^lq_k\|_{L^2(I,L^\infty(\Omega))} \} \leq C_0(l+3k)!^2 \quad\forall l\in\mathbb N_0
	\end{equation}
	and for all $l\in\mathbb N_0$ that 
	\begin{align}
		\label{ieq 5}
		&\|\partial_t^l ( p - \tilde p)\|_{L^2(I,L^\infty(\Omega))}\leq c_1(l+3k+3)!^2\|\nu - \tilde\nu\|_{L^2(\Omega)}
		\\\label{ieq 5b}
		&\|\partial_t^l (q - \tilde q)\|_{L^2(I,L^\infty(\Omega))}\leq c_1(l+3k+3)!^2\|\nu - \tilde\nu\|_{L^2(\Omega)}
		\\\label{ieq 7}
		&\|\partial_t^l (\overline p - p_k)\|_{L^2(I,L^\infty(\Omega))}\leq c_1(l+3k)!^2(\|\overline\nu - \nu_k\|_{L^2(\Omega)} + \|\overline\nu - \nu_{k-1}\|_{L^2(\Omega)})
		\\\label{ieq 8}
		&\|\partial_t^l (\overline q - q_k)\|_{L^2(I,L^\infty(\Omega))}\leq c_1(l+3k)!^2(\|\overline\nu - \nu_k\|_{L^2(\Omega)} + \|\overline\nu - \nu_{k-1}\|_{L^2(\Omega)})
		\\\label{ieq 9}
		&\|\partial_t^l (p - p_k)\|_{L^2(I,L^\infty(\Omega))}\leq c_1(l+3k+3)!^2(\|\nu - \nu_k\|_{L^2(\Omega)} + \|\nu - \nu_{k-1}\|_{L^2(\Omega)})
		\\\label{ieq 10}
		&\|\partial_t^l (q - q_k)\|_{L^2(I,L^\infty(\Omega))}\leq c_1(l+3k+3)! ^2(\|\nu - \nu_k\|_{L^2(\Omega)} + \|\nu - \nu_{k-1}\|_{L^2(\Omega)}).
	\end{align}
\end{lemma}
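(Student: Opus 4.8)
The plan is to treat all of \eqref{ieq 1}--\eqref{ieq 10} by a single mechanism, differing only in the bookkeeping of constants and factorial indices. For each quantity I would subtract the two defining PDE systems (drawn from \eqref{system: statetk}, \eqref{system: adjointtk}, \eqref{iterationstate}, \eqref{iterationadjoint}, or \eqref{first order optimality}), so that the difference solves a damped second-order wave equation of the type in \cref{lemma: stampaccia} with the \emph{fixed} coefficient $\nu_0$, $\nu_k$, or $\nu_{k-1}$ and a right-hand side $g$ that is a finite sum of terms $(\text{control difference})\cdot\partial_t^2(\text{state/adjoint})$ and, in the adjoint case, $\sum_i a_i\cdot(\text{state difference})$. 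Whenever the two compared quantities carry different coefficients, I would first rewrite one equation with the lower-index coefficient, absorbing the mismatch $(\nu_k-\nu_{k-1})\partial_t^2(\cdot)$ into $g$; the favorable feature is that for \eqref{ieq 1}--\eqref{ieq 1b} this mismatch combines with the explicit source into the single small product $(\overline\nu-\nu_0)\partial_t^2(\overline p-p_0)$. Applying the $L^2(I,L^\infty(\Omega))$-estimate of \cref{lemma: stampaccia} to $\partial_t^l$ of the difference then reduces everything to bounding $\|\partial_t^l g\|_{L^2(I,L^2(\Omega))}+\|\partial_t^{l+1}g\|_{L^2(I,L^2(\Omega))}$.

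For the source bounds I would use Hölder's inequality $\|(\nu-\tilde\nu)w\|_{L^2(I,L^2(\Omega))}\le\|\nu-\tilde\nu\|_{L^2(\Omega)}\|w\|_{L^2(I,L^\infty(\Omega))}$ on the bilinear terms and the Leibniz rule on the products $a_i\cdot(\cdot)$, then insert the Gevrey-type factorial bounds of \cref{assumption4} together with hypothesis \eqref{assumptionk}. The decisive analytic fact is that the squared-factorial scale $l\mapsto l!^2$ is stable under these operations: differentiating twice and applying \cref{lemma: stampaccia} shifts the factorial index by $3$, while the Leibniz convolution satisfies $\sum_{j=0}^l\binom{l}{j}\,j!^2\,(l-j+c)!^2\le C\,(l+c')!^2$, so the $a_i$-coupling only costs a further bounded factor (the origin of the $C_acT$ contributions in $c_0,c_1$). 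This $+3$ shift per level is exactly what produces the index $(l+3k)!^2$ in \eqref{pk claim} and $(l+3k+3)!^2$ in the $T_k$-difference estimates.

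Concretely I would proceed in the order of the statement. For \eqref{ieq 1}--\eqref{ieq 1b} I split $\check p-p_0=(\check p-\overline p)+(\overline p-p_0)$: the second summand is controlled directly by \cref{assumption4}, and in the source $(\overline\nu-\nu_0)\partial_t^2(\overline p-p_0)$ of the first I would estimate one factor $\|\overline\nu-\nu_0\|_{L^2(\Omega)}$ by $\tfrac{\gamma}{4\delta}$ (via \eqref{assumption epsilon}) while keeping the remaining $\varepsilon$ from $\|\partial_t^\bullet(\overline p-p_0)\|_{L^2(I,L^\infty(\Omega))}$, reproducing precisely the constant $c_0$ of \eqref{definition delta}. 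Estimates \eqref{ieq 3}--\eqref{ieq 3b} and their $T_k$-analogues \eqref{ieq 5}--\eqref{ieq 5b} follow the same template with $\nu-\tilde\nu$ as the small factor. The bound \eqref{pk claim} is the inductive core: applying \cref{lemma: stampaccia} to \eqref{iterationstate} gives $\|\partial_t^l p_k\|\le(2\hat cC_f+\hat cC_0\tfrac{\overline\gamma}{\delta})(l+3k)!^2$ once $\|\nu_k-\nu_{k-1}\|_{L^2(\Omega)}\le\tfrac1{2\delta}b_{k-1}\le\tfrac{\overline\gamma}{2\delta}$ (triangle inequality plus the monotonicity $b_k\le b_{k-1}$ from \cref{lemma sequences}) and \eqref{assumptionk} are used, and the first inequality in \eqref{assumption c constants} is exactly the condition that closes this to $C_0(l+3k)!^2$; the adjoint bound for $q_k$ then closes identically through the second inequality in \eqref{assumption c constants}. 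Finally \eqref{ieq 7}--\eqref{ieq 10} follow by subtracting against $\overline p,\overline q$ respectively $p_k,q_k$, feeding \eqref{pk claim} and \eqref{ieq 5}--\eqref{ieq 5b} into the state-difference sources and using $\|\nu_k-\nu_{k-1}\|_{L^2(\Omega)}\le\|\nu-\nu_k\|_{L^2(\Omega)}+\|\nu-\nu_{k-1}\|_{L^2(\Omega)}$.

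The main obstacle I anticipate is not any single PDE estimate — each is a routine application of \cref{lemma: abstract,lemma: stampaccia} — but the uniform-in-$l$ factorial bookkeeping and the verification that the recursively engineered constants $c_0,c_1,C_0,\delta$, together with \eqref{assumption c constants} and \eqref{assumption gammas}, absorb every contribution without residual dependence on $l$ or $k$. The genuinely delicate points are (a) the Leibniz estimate for the observation coupling $\sum_i a_i(\cdot)$ in all adjoint equations, where the superfactorial convolution inequality must be married to the correct index shift, and (b) keeping the smallness of the control differences (of order $\tfrac1{4\delta}b_k$) cleanly separated from the factorially growing state/adjoint norms, so that the Lipschitz constants $c_0,c_1$ stay fixed across the whole scale $l\in\mathbb N_0$ and across the iteration index $k$.
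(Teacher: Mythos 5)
Your proposal is correct and follows essentially the same route as the paper's proof: subtract the defining systems so that each difference solves a wave equation with frozen coefficient and a small bilinear source (including the key cancellation producing $(\overline\nu-\nu_0)\partial_t^2(\overline p-p_0)$), apply \cref{lemma: abstract,lemma: stampaccia}, and close the factorial bookkeeping via the Leibniz convolution inequality and the conditions \cref{assumption c constants} from \cref{assumption4}. The one imprecision is in \cref{ieq 9,ieq 10}: the couplings $\sum_i a_i(p-p_k)$ must be fed the $L^2(I,L^2(\Omega))$ bounds on $\partial_t^l(p-p_k)$ coming from \cref{lemma: abstract} (one-derivative loss, index $(l+3k+1)!^2$, the paper's \cref{ieq p - pk3}) rather than the $L^\infty$ estimate \cref{ieq 9} itself (three-derivative loss), since otherwise the Leibniz convolution overshoots the target index $(l+3k+3)!^2$.
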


\begin{proof}
	Considering the corresponding PDEs for the quantities $p,\tilde p, \overline p, p_k, q,\tilde q, \overline q$, and $q_k$ (see \cref{first order optimality,system: statetk,system: adjointtk,iterationstate,iterationadjoint}), the estimates follow from \cref{lemma: abstract}, \cref{lemma: stampaccia} along with the estimates from \cref{assumption4}. We refer to the \cref{subsection proof lemma estimates} for the detailed proof.
\end{proof}

\begin{proposition}
	\label{proposition contraction} Let \cref{assumption4} be satisfied and $\{b_k\}_{k\in\mathbb N_0}$ as in \cref{lemma sequences}. Then, the mapping $(I_{\nu}\circ S_{0}\circ T_{0})\colon L^2(\Omega) \to L^2(\Omega)$ associated with $(\nu_0, p_0, q_0)$ is a contraction and admits a unique fixed point $\nu_1\in\mathcal V_{ad}^\tau$ satisfying
	\begin{equation}\label{ieq: quadratic1}
		\|\nu_1 - \overline\nu\|_{L^2(\Omega)}\leq \frac{1}{4\delta}b_1,
	\end{equation}
	where $\overline\gamma$ and $\delta$ are as in \cref{assumption4}. Let additionally $\nu_k$ and $(\nu_{k-1},p_{k-1},q_{k-1})$ satisfy \cref{assumptionk} for some $k\in\mathbb N$. Then, the mapping $(I_{\nu}\circ S_{k}\circ T_{k})\colon L^2(\Omega) \to L^2(\Omega)$ associated with $(\nu_k, p_k, q_k)$, with $p_k$ and $q_k$ being the unique solutions to \cref{iterationstate} and \cref{iterationadjoint}, is a contraction and admits a unique fixed point $\nu_{k+1}\in\mathcal V_{ad}^\tau$ satisfying
	\begin{align}\label{ieq: quadratic2}
		\|\nu_{k+1} - \overline\nu\|_{L^2(\Omega)}
		&\leq \delta(3k+5)!^4(\|\overline\nu - \nu_k\|_{L^2(\Omega)} + \|\overline\nu - \nu_{k-1}\|_{L^2(\Omega)})^2
		\\\label{claim nu k+1}
		\|\nu_{k+1}- \overline\nu\|_{L^2(\Omega)}
		&\leq \frac{1}{4\delta}b_{k+1}.
	\end{align}
\end{proposition}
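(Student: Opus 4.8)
The plan is to realize the composite map as a uniform contraction on the complete set $\mathcal V_{ad}$ and then to locate its fixed point near $\overline\nu$ by a standard perturbation argument. Write $\Phi_k:=I_\nu\circ S_k\circ T_k$; since the range of $S_k$ lies in $\mathcal V_{ad}^\tau\subset\mathcal V_{ad}$, it suffices to treat $\Phi_k$ as a self-map of the closed (hence complete) set $\mathcal V_{ad}\subset L^2(\Omega)$, on which \cref{lemma: estimates} is available. Fix $\hat\nu_1,\hat\nu_2\in\mathcal V_{ad}$ and set $T_k(\hat\nu_i)=(\hat\nu_i,\hat p_i,\hat q_i)$ and $S_k(\hat\nu_i,\hat p_i,\hat q_i)=(\nu_i,p_i,q_i)$, so $\Phi_k(\hat\nu_i)=\nu_i$. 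By \cref{remark Sk}, $S_k$ is the solution operator of \cref{system: linpertOC2} with the (admissible) perturbations \cref{perturbations}, and a direct calculation shows that the $\hat\nu$-independent and the $\partial_t^2\overline p,\partial_t^2\overline q$ contributions recombine into
\begin{align*}
\rho^{st}_1-\rho^{st}_2&=-(\nu_k-\overline\nu)\partial_t^2(\hat p_1-\hat p_2)-(\hat\nu_1-\hat\nu_2)\partial_t^2(p_k-\overline p),\\
\rho^{adj}_1-\rho^{adj}_2&=-(\nu_k-\overline\nu)\partial_t^2(\hat q_1-\hat q_2)-(\hat\nu_1-\hat\nu_2)\partial_t^2(q_k-\overline q),\\
\rho^{VI}_1-\rho^{VI}_2&=\int_I\partial_t^2(p_k-\overline p)(\hat q_1-\hat q_2)+\partial_t^2(\hat p_1-\hat p_2)(q_k-\overline q)\,\d t.
\end{align*}
Applying the $\nu$-estimate \cref{Lipschitzproperties} of \cref{theorem:lipschitzproperty} bounds $\|\nu_1-\nu_2\|_{L^2(\Omega)}$ by $L$ times the sum of the $L^2(I,L^2(\Omega))$- resp. $L^2(\Omega)$-norms of these three differences.

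Each norm is then estimated with the elementary inequalities $\|wg\|_{L^2(I,L^2)}\le\|w\|_{L^2(\Omega)}\|g\|_{L^2(I,L^\infty)}$, $\|g\|_{L^2(I,L^2)}\le\sqrt{|\Omega|}\,\|g\|_{L^2(I,L^\infty)}$ and $\|\int_I ab\,\d t\|_{L^2(\Omega)}\le\|a\|_{L^2(I,L^\infty)}\|b\|_{L^2(I,L^2)}$: the input-dependent factors $\hat p_1-\hat p_2,\hat q_1-\hat q_2$ are controlled by \cref{ieq 5,ieq 5b}, the fixed factors $p_k-\overline p,q_k-\overline q$ by \cref{ieq 7,ieq 8}, and $\|\nu_k-\overline\nu\|_{L^2(\Omega)}\le\frac1{4\delta}b_k$, $\|\nu_{k-1}-\overline\nu\|_{L^2(\Omega)}\le\frac1{4\delta}b_{k-1}$ follow from \cref{assumptionk} together with the monotonicity $b_k\le b_{k-1}$. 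Collecting the factorial weights, every contribution is dominated by a constant times $(3k+5)!^4b_{k-1}\,\|\hat\nu_1-\hat\nu_2\|_{L^2(\Omega)}$, so the tailored bound \cref{ieq gamma bar}, namely $(3k+5)!^4b_{k-1}\le\overline\gamma$, produces a contraction factor at most $\frac{L\overline\gamma}{\delta}\big(\tfrac32 c_1+\sqrt{|\Omega|}c_1^2\big)$; the smallness of $\overline\gamma$ in \cref{assumption gammas} forces this to be $\le\frac12$. The case $k=0$ is structurally identical, using instead the $T_0$-estimates \cref{ieq 1,ieq 1b,ieq 3,ieq 3b}, the data bounds of \cref{assumption4}, and the third entry of \cref{assumption epsilon}. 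Banach's theorem now yields a unique fixed point $\nu_{k+1}\in\mathcal V_{ad}$, which lies in $\mathcal V_{ad}^\tau$ because it equals its own $S_k$-image.

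To localize $\nu_{k+1}$ I use the contraction perturbation bound $\|\nu_{k+1}-\overline\nu\|_{L^2(\Omega)}\le\frac{1}{1-\frac12}\|\Phi_k(\overline\nu)-\overline\nu\|_{L^2(\Omega)}=2\|\Phi_k(\overline\nu)-\overline\nu\|_{L^2(\Omega)}$. Since $\overline\nu$ is the control-component of \cref{system: linpertOC2} with vanishing perturbations while $\Phi_k(\overline\nu)$ is that of $S_k$ evaluated at $T_k(\overline\nu)=(\overline\nu,\check p,\check q)$, estimate \cref{Lipschitzproperties} applies to their difference. Here the perturbations collapse to $\rho^{st}=-(\nu_k-\overline\nu)\partial_t^2(\check p-p_k)$, $\rho^{adj}=-(\nu_k-\overline\nu)\partial_t^2(\check q-q_k)$, and — after replacing $\check q,q_k$ by $\overline q$ plus remainders and using the telescoping identity $(p_k-\overline p)+(\check p-p_k)+(\overline p-\check p)=0$, which annihilates the leading $\overline q$-term — to $\rho^{VI}=\int_I\partial_t^2(p_k-\overline p)(\check q-\overline q)+\partial_t^2(\check p-p_k)(q_k-\overline q)\,\d t$. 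Every factor is now $O(\|\overline\nu-\nu_k\|_{L^2(\Omega)}+\|\overline\nu-\nu_{k-1}\|_{L^2(\Omega)})$ via \cref{ieq 7,ieq 8,ieq 9,ieq 10}, and invoking \cref{Lipschitzproperties} with the definition of $\delta$ in \cref{definition delta} gives $\|\Phi_k(\overline\nu)-\overline\nu\|_{L^2(\Omega)}\le\frac\delta2(3k+5)!^4(\|\overline\nu-\nu_k\|_{L^2(\Omega)}+\|\overline\nu-\nu_{k-1}\|_{L^2(\Omega)})^2$, which doubled is exactly \cref{ieq: quadratic2}. For $k=0$ the same computation with the $T_0$-estimates and the fourth entry of \cref{assumption epsilon} (recalling $b_1=8!^2\gamma^{\sqrt2}$) delivers \cref{ieq: quadratic1}. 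Finally, setting $x_j:=\|\nu_j-\overline\nu\|_{L^2(\Omega)}$, the three hypotheses of \cref{assumption x} hold by \cref{assumptionk} and \cref{ieq: quadratic2}, so \cref{claim xk} of \cref{lemma sequences} yields \cref{claim nu k+1}.

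The main obstacle is the handling of $\rho^{VI}$. For the contraction its two input-dependent terms recombine automatically into $(p_k-\overline p)$- and $(q_k-\overline q)$-weighted products, but for the fixed-point localization one must \emph{engineer} the telescoping cancellation of the leading $\overline q$-contribution; without it the bound on $\|\Phi_k(\overline\nu)-\overline\nu\|_{L^2(\Omega)}$ would be merely linear in the errors, and the quadratic two-step estimate \cref{ieq: quadratic2} — the engine of the R-superlinear convergence — would collapse. The secondary difficulty is the bookkeeping of the super-exponentially growing factorial weights introduced by the loss of regularity, which is precisely neutralized by \cref{ieq gamma bar} and the calibrated smallness constants of \cref{assumption gammas,assumption epsilon,definition delta}.
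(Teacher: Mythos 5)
Your proof is correct and follows the paper's own argument essentially step for step: the same recombination of the perturbation differences fed into \cref{Lipschitzproperties}, the same use of \cref{lemma: estimates}, \cref{assumptionk}, \cref{ieq gamma bar}, and the smallness conditions \cref{assumption gammas,assumption epsilon} to obtain the contraction factor $\leq\frac12$, the same localization bound $\|\nu_{k+1}-\overline\nu\|_{L^2(\Omega)}\leq 2\|(I_\nu\circ S_k\circ T_k)(\overline\nu)-\overline\nu\|_{L^2(\Omega)}$ together with the telescoping cancellation of the $\overline q$-term in $\rho^{VI}$ yielding the quadratic two-step estimate, and the same final appeal to \cref{lemma sequences}. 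The only cosmetic deviations --- restricting the contraction argument to $\mathcal V_{ad}$ rather than all of $L^2(\Omega)$ (which still gives the unique fixed point in $\mathcal V_{ad}^\tau$, since the range of $I_\nu\circ S_k$ lies there), and the slightly sharper contraction constant $\tfrac32 c_1+\sqrt{|\Omega|}\,c_1^2$ in place of the paper's $2c_1+\sqrt{|\Omega|}\,c_1^2$ --- do not change the substance.
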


\begin{remark}
	Notice that the positive constant $\delta$ appearing in \cref{ieq: quadratic1}-\cref{claim nu k+1} is independent of $k$. More precisely, it depends only on the given data as defined in \cref{assumption4}.
\end{remark}

\begin{proof}
	Let $\nu, \tilde\nu\in L^2(\Omega)$ and $k\in\mathbb N_0$. Further, if $k\in\mathbb N$, suppose that $\nu_k$ and $(\nu_{k-1},p_{k-1},q_{k-1})$ satisfy \cref{assumptionk} and $p_k$, $q_k$ denote the unique solutions to \cref{iterationstate} and \cref{iterationadjoint}, respectively. According to \cref{def Tk}, we may write $(\nu,p, q)\coloneqq T_{k}(\nu)$ and $(\tilde\nu, \tilde p, \tilde q)\coloneqq T_{k}(\tilde\nu)$. As pointed out in \cref{remark Sk}, $(S_{k}\circ T_{k})(\nu)$ and $(S_{k}\circ T_{k})(\tilde\nu)$ solve \cref{system: linpertOC2} with the perturbation terms \cref{perturbations} for $(\hat\nu, \hat p, \hat q) = (\nu,p, q)$ and $(\hat\nu, \hat p, \hat q) = (\tilde\nu, \tilde p, \tilde q)$, respectively. Then, by \cref{theorem:lipschitzproperty}, it holds that
	\begin{align}\label{estimate}
		&\|(I_{\nu}\circ S_{k}\circ T_{k})(\nu) - (I_{\nu}\circ S_{k}\circ T_{k})(\tilde\nu)\|_{L^2(\Omega)} 
		\\\notag
		&\leq L(\|(\nu_k - \overline\nu)\partial_t^2(p - \tilde p) + (\nu - \tilde\nu)\partial_t^2(p_k - \overline p)\|_{L^2(I, L^2(\Omega))} + \|(\nu_k - \overline\nu)\partial_t^2(q - \tilde q) + (\nu - \tilde\nu)\partial_t^2(q_k - \overline q)\|_{L^2(I, L^2(\Omega))}
		\\\notag
		&\hspace{.87cm}+ \|\int_I\partial_t^2(p_k(t) - \overline p(t))(q(t) - \tilde q(t)) + \partial_t^2(p(t) - \tilde p(t))(q_k(t) - \overline q(t))\d t\|_{L^2(\Omega)})
		\\\notag
		&\leq L(\|\nu_k - \overline\nu\|_{L^2(\Omega)}\|\partial_t^2(p - \tilde p)\|_{L^2(I, L^\infty(\Omega))} + \|\nu - \tilde\nu\|_{L^2(\Omega)}\|\partial_t^2(p_k - \overline p)\|_{L^2(I, L^\infty(\Omega))}
		\\\notag
		&\hspace{.87cm} + \|\nu_k - \overline\nu\|_{L^2(\Omega)}\|\partial_t^2(q - \tilde q)\|_{L^2(I, L^\infty(\Omega))} + \|\nu - \tilde\nu\|_{L^2(\Omega)}\|\partial_t^2(q_k - \overline q)\|_{L^2(I, L^\infty(\Omega))}
		\\\notag
		&\hspace{.87cm}+ \|\partial_t^2(p_k - \overline p)\|_{L^2(I, L^\infty(\Omega))}\|q - \tilde q\|_{L^2(I, L^2(\Omega))} + \|\partial_t^2(p - \tilde p)\|_{L^2(I, L^2(\Omega))}\|q_k - \overline q\|_{L^2(I, L^\infty(\Omega))}).
	\end{align}
	According to \cref{assumption4} and \cref{lemma: estimates} (see \cref{ieq 3,ieq 3b}), the above inequality implies for $k=0$ that 
	\begin{align*}
		\|(I_{\nu}\circ S_0\circ T_0)(\nu) - (I_{\nu}\circ S_0\circ T_0)(\tilde\nu)\|_{L^2(\Omega)} 
		&\leq L(2c_15! ^2 + 8C_0 + 2\sqrt{|\Omega|}5! ^2 C_0c_1)\varepsilon\|\nu - \tilde\nu\|_{L^2(\Omega)}
		\\
		&\hspace{-.14cm}\underbrace{\leq}_{\cref{assumption epsilon}}\frac{1}{2}\|\nu - \tilde\nu\|_{L^2(\Omega)}.
	\end{align*}
	Similarly, for $k\in\mathbb N$, we obtain, using the monotonicity $b_k\leq b_{k-1}$ (see \cref{lemma sequences}), that 
	\begin{align*}
		&\|(I_{\nu}\circ S_{k}\circ T_{k})(\nu) - (I_{\nu}\circ S_{k}\circ T_{k})(\tilde\nu)\|_{L^2(\Omega)} 
		\\
		&\hspace{-.3cm}\underbrace{\leq}_{\cref{ieq 5}-\cref{ieq 8}}L(4c_1+2\sqrt{|\Omega|}c_1^2)(3k+5)!^4(\|\overline\nu - \nu_k\|_{L^2(\Omega)} + \|\overline\nu - \nu_{k-1}\|_{L^2(\Omega)})\|\nu - \tilde\nu\|_{L^2(\Omega)}
		\\
		&\underbrace{\leq}_{\cref{assumptionk}}\frac{1}{2\delta}L(4c_1+2\sqrt{|\Omega|}c_1^2)(3k+5)!^4b_{k-1}\|\nu - \tilde\nu\|_{L^2(\Omega)}\underbrace{\leq}_{\cref{ieq gamma bar}} \frac{1}{2\delta}L(4c_1+2\sqrt{|\Omega|}c_1^2) \overline\gamma\|\nu - \tilde\nu\|_{L^2(\Omega)}\underbrace{<}_{\cref{assumption gammas}} \frac{1}{2}\|\nu - \tilde\nu\|_{L^2(\Omega)}.
	\end{align*}
	Therefore, the mapping $(I_{\nu}\circ S_{k}\circ T_{k})\colon L^2(\Omega)\to L^2(\Omega)$ is a contraction and consequently admits a unique fixed point $\nu_{k+1}\in L^2(\Omega)$ due to Banach's fixed point theorem. Also, according to \cref{system: Sk}, $I_\nu\circ S_k$ maps into $\mathcal V_{ad}^\tau$ such that $\nu_{k+1}\in \mathcal V_{ad}^\tau$. Now, let us prove \cref{ieq: quadratic1} and \cref{ieq: quadratic2}. From the above contraction property, it holds that
	\begin{equation*}
		\frac{1}{2}\|\nu_{k+1} - \overline\nu\|_{L^2(\Omega)} + \|(I_{\nu}\circ S_{k}\circ T_{k})(\nu_{k+1}) - (I_{\nu}\circ S_{k}\circ T_{k})(\overline\nu)\|_{L^2(\Omega)} \leq \|\nu_{k+1} - \overline\nu\|_{L^2(\Omega)}.
	\end{equation*}
	Thus, since $\nu_{k+1} = (I_{\nu}\circ S_{k}\circ T_{k})\nu_{k+1}$, it follows that
	\begin{equation}\label{ieq: contr1}
		\frac{1}{2}\|\nu_{k+1} - \overline\nu\|_{L^2(\Omega)} \leq \|\nu_{k+1} - \overline\nu\|_{L^2(\Omega)} - \|(I_{\nu}\circ S_{k}\circ T_{k})(\nu_{k+1}) - (I_{\nu}\circ S_{k}\circ T_{k})(\overline\nu)\|_{L^2(\Omega)} \leq \|(I_{\nu}\circ S_{k}\circ T_{k})(\overline\nu) - \overline\nu\|_{L^2(\Omega)}.
	\end{equation}
	Furthermore, we set $(\overline\nu, \check p, \check q)\coloneqq T_k(\overline\nu)$. As above, according to \cref{remark Sk}, $S_k(T_k(\overline\nu))$ solves \cref{system: linpertOC2} with the perturbation terms \cref{perturbations} with $(\hat\nu, \hat p, \hat q) = (\overline\nu, \check p, \check q)$, and $(\overline\nu, \overline p, \overline q)$ solves \cref{system: linpertOC2} with the perturbation $(\rho^{st}, \rho^{adj}, \rho^{VI}) = (0,0,0)$. Thus, \cref{theorem:lipschitzproperty} implies that
	\begin{align}\label{ieq: contr2}
		&\|(I_{\nu}\circ S_{k}\circ T_{k})(\overline\nu) - \overline\nu\|_{L^2(\Omega)}
		\\\notag
		&\leq L(\|(\nu_k - \overline\nu)\partial_t^2(\check p - p_k)\|_{L^2(I, L^2(\Omega))} + \|(\nu_k - \overline\nu)\partial_t^2(\check q - q_k)\|_{L^2(I, L^2(\Omega))}
		\\\notag
		&\quad + \left\|\int_I\partial_t^2(p_k(t)-\overline p(t))\check q(t) + \partial_t^2(\check p(t) - p_k(t))q_k(t) + \partial_t^2(\overline p(t) - \check p(t))\overline q(t)\d t\right\|_{L^2(\Omega)})
		\\\notag
		&\leq L(\|\nu_k - \overline\nu\|_{L^2(\Omega)}\|\partial_t^2(\check p - p_k)\|_{L^2(I, L^\infty(\Omega))} + \|\nu_k - \overline\nu\|_{L^2(\Omega)}\|\partial_t^2(\check q - q_k)\|_{L^2(I, L^\infty(\Omega))}
		\\\notag
		&\quad + \|\partial_t^2(p_k-\overline p)\|_{L^2(I, L^\infty(\Omega))}\|\check q - \overline q\|_{L^2(I, L^2(\Omega))} + \|\partial_t^2(\check p - p_k)\|_{L^2(I, L^\infty(\Omega))}\|q_k - \overline q\|_{L^2(I, L^2(\Omega))})
		\\\notag
		&\leq L(\|\nu_k - \overline\nu\|_{L^2(\Omega)}\|\partial_t^2(\check p - p_k)\|_{L^2(I, L^\infty(\Omega))} + \|\nu_k - \overline\nu\|_{L^2(\Omega)}\|\partial_t^2(\check q - q_k)\|_{L^2(I, L^\infty(\Omega))}
		\\\notag
		&\quad + \|\partial_t^2(p_k-\overline p)\|_{L^2(I, L^\infty(\Omega))}\|\check q - q_k\|_{L^2(I, L^2(\Omega))} + \|\partial_t^2(p_k-\overline p)\|_{L^2(I, L^\infty(\Omega))}\|q_k - \overline q\|_{L^2(I, L^2(\Omega))} 
		\\\notag
		&\quad + \|\partial_t^2(\check p - p_k)\|_{L^2(I, L^\infty(\Omega))}\|q_k - \overline q\|_{L^2(I, L^2(\Omega))}).
	\end{align} 
	For $k = 0$, applying \cref{ieq: contr2} to \cref{ieq: contr1} and making use of \cref{lemma: estimates} and \cref{assumption4} yield that
	\begin{equation*}
		\|\nu_1 - \overline\nu\|_{L^2(\Omega)}\leq 2L(2c_05! ^2 + \sqrt{|\Omega|}C_0(4c_03! ^2 + 4C_0 +c_05! ^2))\varepsilon^2\underbrace{\leq}_{\cref{assumption epsilon}}\frac{1}{4\delta}8!^2\gamma^{\sqrt{2}}= \frac{1}{4\delta}b_1.
	\end{equation*}
	Analogously, for $k\in\mathbb N$, \cref{lemma: estimates} (see \cref{ieq 7}-\cref{ieq 10} with $(\nu,p,q)=(\overline \nu, \check p, \check q)$) implies that
	\begin{equation*}
		\|\nu_{k+1} - \overline\nu\|_{L^2(\Omega)}\underbrace{\leq}_{\cref{ieq 7}-\cref{ieq 10}}\underbrace{2L(2c_1 + 3\sqrt{|\Omega|}c_1^2)}_{=\delta}(3k+5)!^4(\|\overline\nu - \nu_k\|_{L^2(\Omega)} + \|\overline\nu - \nu_{k-1}\|_{L^2(\Omega)})^2.
	\end{equation*} 
	In conclusion, \cref{ieq: quadratic1} and \cref{ieq: quadratic2} are valid. Finally, due to \cref{assumptionk} and \cref{ieq: quadratic2} and since $\|\nu_0 - \overline\nu\|_{L^2(\Omega)}\leq\varepsilon\leq\frac{\gamma}{4\delta}$ (see \cref{assumption4}), we may apply \cref{lemma sequences} with $x_k\coloneqq\|\nu_k - \overline\nu\|_{L^2(\Omega)}$ to obtain 
	\cref{claim nu k+1}.
\end{proof}

Under \cref{assumption3}, we know that \cref{alg:sqp} is well-defined (see \cref{theorem well posedness}), but the iteration step \cref{iteration} may have multiple possible solutions in $\mathcal V_{ad}\times X_0\times X_T$. In the following, we prove that under \cref{assumption4} and \cref{assumptionk}, the solution to the iteration step \cref{iteration} is unique in $\mathcal V_{ad}^\tau\times X_0\times X_T$ that is precisely given by the unique fixed point from \cref{proposition contraction}.

\begin{proposition}\label{proposition uniqueness}
	Let \cref{assumption4} be satisfied and $\nu_1\in\mathcal V_{ad}^\tau$ denote the unique fixed point of $(I_{\nu}\circ S_{0}\circ T_{0})$ associated with $(\nu_0, p_0, q_0)$. Then, $(\nu_1, p_1, q_1)\coloneqq T_0(\nu_1)$ is the unique solution to the iteration \cref{iteration} for $k=0$ in $\mathcal V_{ad}^\tau\times X_0\times X_T$. Let additionally $\nu_k$ and $(\nu_{k-1},p_{k-1},q_{k-1})$ satisfy \cref{assumptionk} for some $k\in\mathbb N$ and $\nu_{k+1}\in\mathcal V_{ad}$ denote the unique fixed point of $(I_{\nu}\circ S_{k}\circ T_{k})$ associated with $(\nu_k, p_k, q_k)$, with $p_k$ and $q_k$ being the unique solutions to \cref{iterationstate} and \cref{iterationadjoint}. Then, $(\nu_{k+1}, p_{k+1}, q_{k+1})\coloneqq T_k(\nu_{k+1})$ is the unique solution to \cref{iteration} in $\mathcal V_{ad}^\tau\times X_0\times X_T$.
\end{proposition}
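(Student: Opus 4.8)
The plan is to show that the solutions of \cref{iteration} in $\mathcal V_{ad}^\tau\times X_0\times X_T$ are exactly the images under $T_k$ of the fixed points of the self-map $(I_\nu\circ S_k\circ T_k)$ from \cref{proposition contraction}; since that fixed point is unique, so is the \cref{iteration}-solution. I treat $k\in\mathbb N$ in detail, the case $k=0$ being identical upon replacing the $b_k$-bounds of \cref{assumptionk} by the $\varepsilon$-bounds of \cref{assumption4} and \cref{ieq 1,ieq 1b} of \cref{lemma: estimates}. The pivotal elementary observation is a \emph{matching identity}: if $(\hat\nu,\hat p,\hat q)=T_k(\hat\nu)$ and $(\nu,p,q)=S_k(\hat\nu,\hat p,\hat q)$ happen to satisfy $\nu=\hat\nu$, then rewriting the state equation of \cref{system: Sk} with leading coefficient $\overline\nu$ and inserting $\rho^{st}$ from \cref{perturbations} makes the $\partial_t^2\overline p$–terms cancel, reducing it to the state equation of \cref{iteration} for $\hat p$; uniqueness in \cref{lemma: abstract} then forces $p=\hat p$, and analogously $q=\hat q$. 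In words, once the control components agree, the state and adjoint components produced by $S_k\circ T_k$ coincide with those of $T_k$.

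For existence, let $\nu_{k+1}$ be the fixed point and set $(\nu_{k+1},p_{k+1},q_{k+1})\coloneqq T_k(\nu_{k+1})\in\mathcal V_{ad}^\tau\times X_0\times X_T$; by definition of $T_k$ this triple already solves the state and adjoint PDEs of \cref{iteration}. Applying the matching identity with $\hat\nu=\nu=\nu_{k+1}$ upgrades the fixed-point property from the control alone to the whole triple, i.e.\ $S_k(T_k(\nu_{k+1}))=(\nu_{k+1},p_{k+1},q_{k+1})$, so by \cref{remark Sk} the triple solves \cref{system: linpertOC2} with the perturbations \cref{perturbations} evaluated at itself. Substituting $\hat p=p=p_{k+1}$, $\hat q=q=q_{k+1}$, $\hat\nu=\nu=\nu_{k+1}$ into the variational inequality of \cref{system: Sk}, the $\partial_t^2(p-\hat p)\overline q$–term drops out and the two terms carrying $q_{k+1}$ combine into $\partial_t^2 p_k\,q_{k+1}$, so that the inequality becomes precisely the variational inequality of \cref{iteration}, but tested only against $\mathcal V_{ad}^\tau$.

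It remains to extend the variational inequality from $\mathcal V_{ad}^\tau$ to all of $\mathcal V_{ad}$, which is exactly the content of \cref{lemma: firstorderrho} provided its smallness bound \cref{ieq: perturbation bound} holds for the present perturbations; verifying this bound is where I expect the main effort to lie. Concretely, I would estimate $\|\rho^{st}\|_{H^1(I,L^2(\Omega))}$, $\|\rho^{adj}\|_{H^1(I,L^2(\Omega))}$ and $\|\rho^{VI}\|_{L^\infty(\Omega)}$ by factoring out the control differences $\|\nu_k-\overline\nu\|_{L^2(\Omega)}$, $\|\nu_{k+1}-\nu_k\|_{L^2(\Omega)}$, $\|\nu_{k+1}-\overline\nu\|_{L^2(\Omega)}$ and controlling the accompanying time-derivative factors of $p_{k+1},q_{k+1},p_k,q_k,\overline p,\overline q$ through \cref{lemma: estimates}, \cref{assumptionk} and the bounds of \cref{assumption4}; combined with the quadratic estimate \cref{claim nu k+1}, all three control differences are bounded by constant multiples of $\overline\gamma$, and the third entry in the minimum of \cref{assumption gammas} is designed precisely so that the resulting sum stays below $\tau/c_L$. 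Once \cref{ieq: perturbation bound} is confirmed, \cref{lemma: firstorderrho} shows that $(\nu_{k+1},p_{k+1},q_{k+1})$ solves \cref{system: linpertOC}, and hence \cref{iteration}.

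For uniqueness, let $(\nu^*,p^*,q^*)\in\mathcal V_{ad}^\tau\times X_0\times X_T$ be any solution of \cref{iteration}. Since $p^*,q^*$ solve the state and adjoint PDEs with control $\nu^*$, uniqueness in \cref{lemma: abstract} gives $T_k(\nu^*)=(\nu^*,p^*,q^*)$. I then verify that this triple solves \cref{system: linpertOC2} with the perturbations \cref{perturbations} at $(\nu^*,p^*,q^*)$: the PDEs match by the same rewriting as in the matching identity, and the variational inequality holds because its functional equals the \cref{iteration}-functional (by the above combination of terms), which is nonnegative against every $\tilde\nu\in\mathcal V_{ad}\supset\mathcal V_{ad}^\tau$, while $\nu^*\in\mathcal V_{ad}^\tau$. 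Note that this direction needs no perturbation bound. By the uniqueness in \cref{proposition: well-definedness}, the triple is the unique solution of \cref{system: linpertOC2}, so $S_k(T_k(\nu^*))=(\nu^*,p^*,q^*)$ and therefore $\nu^*=(I_\nu\circ S_k\circ T_k)(\nu^*)$. The uniqueness of the fixed point from \cref{proposition contraction} then forces $\nu^*=\nu_{k+1}$ and, via $T_k$, $(\nu^*,p^*,q^*)=(\nu_{k+1},p_{k+1},q_{k+1})$, which completes the proof.
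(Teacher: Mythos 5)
Your proposal is correct and follows essentially the same route as the paper's proof: show via PDE uniqueness that $T_k(\nu_{k+1})$ is a fixed point of $S_k$, reduce the variational inequality of \cref{system: Sk} to that of \cref{iteration} over $\mathcal V_{ad}^\tau$, verify the smallness bound \cref{ieq: perturbation bound} for the perturbations \cref{perturbations iteration} using \cref{lemma: estimates}, \cref{assumptionk}, \cref{claim nu k+1}, \cref{ieq gamma bar} and the third entry of \cref{assumption gammas} so that \cref{lemma: firstorderrho} extends the inequality to $\mathcal V_{ad}$, and obtain uniqueness by showing any solution in $\mathcal V_{ad}^\tau\times X_0\times X_T$ is itself a fixed point of $S_k$, hence of $(I_\nu\circ S_k\circ T_k)$. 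The only difference is that you sketch the perturbation estimate rather than executing the chain of inequalities, but you identify exactly the ingredients the paper uses, so the argument is complete in structure.
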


\begin{proof}
	Let $k$ be either zero or as above. Let $k\in\mathbb N_0$. Since $\nu_{k+1}$ is a fixed-point of $(I_{\nu}\circ S_{k}\circ T_{k})$, it holds by the definition of $S_k$ (see \cref{system: Sk} for $(\hat\nu, \hat p, \hat q) = (\nu_{k+1}, p_{k+1}, q_{k+1})$) that $S_{k}(\nu_{k+1}, p_{k+1}, q_{k+1}) = (\nu_{k+1}, p, q)$ where $p, q$ are the unique solutions to 
	\begin{align}\label{system pk+1}
		&\left\{\begin{aligned}
			&\overline\nu\partial_t^2 p - \Delta p + \eta\partial_t p = f - (\nu_k - \overline\nu)\partial_t^2 p_{k+1} - (\nu_{k+1} - \nu_k)\partial_t^2p_k
			&&\text{in }I\times\Omega
			\\
			&\partial_np = 0
			&&\text{on }I\times\Gamma_N
			\\
			&p = 0
			&&\text{on }I\times\Gamma_D
			\\
			&(p, \partial_t p)(0) = (0, 0)
			&&\text{in }\Omega		
		\end{aligned}\right.
	\end{align}
	and
	\begin{align}\label{system qk+1}
		&\left\{\begin{aligned}
			&\overline \nu\partial_t^2q - \Delta q - \eta\partial_t q = \sum_{i=1}^m a_i(p - p_i^{ob}) - (\nu_k - \overline\nu)\partial_t^2 q_{k+1} - (\nu_{k+1} - \nu_k)\partial_t^2q_k
			&&\text{in }I\times\Omega
			\\
			&\partial_nq = 0
			&&\text{on }I\times\Gamma_N
			\\
			&q = 0
			&&\text{on }I\times\Gamma_D
			\\
			&(q, \partial_t q)(T) = (0,0)
			&&\text{in }\Omega.
		\end{aligned}\right.
	\end{align}
	Furthermore, according to the definition of $T_{k}$ (see \cref{def Tk} for $(\hat\nu, \hat p, \hat q) = (\nu_{k+1}, p_{k+1}, q_{k+1})$), $p_{k+1}$ and $q_{k+1}$, respectively, solve the same systems \cref{system pk+1} and \cref{system qk+1}. Therefore, we obtain $p = p_{k+1}$ and $q = q_{k+1}$, and consequently $T_{k}(\nu_{k+1})$ is a fixed point of $S_{k}$ and satisfies the PDEs in \cref{iteration}. Let us prove that $T_{k}(\nu_{k+1})$ satisfies the variational inequality in \cref{iteration}. Note that the assumptions of \cref{lemma: estimates} are fulfilled. On the other hand, in view of \cref{remark Sk}, the fixed point $(\nu_{k+1}, p_{k+1}, q_{k+1})$ of $S_k$ solves \cref{system: linpertOC2} with the perturbation terms
	\begin{align}\label{perturbations iteration}
		\rho^{st}
		&= - (\nu_k - \overline\nu)\partial_t^2 p_{k+1} - (\nu_{k+1} - \nu_k)\partial_t^2p_k - (\overline\nu - \nu_{k+1})\partial_t^2\overline p 
		&&\in H^1(I, L^2(\Omega))
		\\\notag
		\rho^{adj}
		&= - (\nu_k - \overline\nu)\partial_t^2q_{k+1} - (\nu_{k+1} - \nu_k)\partial_t^2q_k - (\overline\nu - \nu_{k+1})\partial_t^2\overline q
		&&\in H^1(I, L^2(\Omega))	
		\\\notag
		\rho^{VI} 
		&=\int_I\partial_t^2(p_k(t)-\overline p(t))q_{k+1}(t) + \partial_t^2(p_{k+1}(t) - p_k(t))q_k(t) + \partial_t^2(\overline p(t) - p_{k+1}(t))\overline q(t)\d t
		&&\in L^\infty(\Omega),
	\end{align}
	satisfying $\rho^{st}(0) = \rho^{adj}(T) = 0$. Using \cref{assumption4}, \cref{lemma: estimates} for $k$ and $k+1$, we obtain that 
	\begin{align*}
		&\|\rho^{st}\|_{H^1(I, L^2(\Omega))} + \|\rho^{adj}\|_{H^1(I, L^2(\Omega))} + \|\rho^{VI}\|_{L^\infty(\Omega)}
		\\
		&\leq \|\nu_k - \overline\nu\|_{L^2(\Omega)}\|\partial_t^2 p_{k+1}\|_{H^1(I, L^\infty(\Omega))} + \|\nu_{k+1} - \nu_k\|_{L^2(\Omega)}\|\partial_t^2p_k\|_{H^1(I, L^\infty(\Omega))} 
		\\
		&\quad + \|\overline\nu - \nu_{k+1}\|_{L^2(\Omega)}\|\partial_t^2\overline p\|_{H^1(I, L^\infty(\Omega))} + \|\nu_k - \overline\nu\|_{L^2(\Omega)}\|\partial_t^2 q_{k+1}\|_{H^1(I, L^\infty(\Omega))} 
		\\
		&\quad+ \|\nu_{k+1} - \nu_k\|_{L^2(\Omega)}\|\partial_t^2q_k\|_{H^1(I, L^\infty(\Omega))} + \|\overline\nu - \nu_{k+1}\|_{L^2(\Omega)}\|\partial_t^2\overline q\|_{H^1(I, L^\infty(\Omega))}
		\\
		&\quad +\|\partial_t^2(p_k-\overline p)\|_{L^2(I, L^\infty(\Omega))}\|q_{k+1}\|_{L^2(I, L^\infty(\Omega))} + \|\partial_t^2(p_{k+1} - p_k)\|_{L^2(I, L^\infty(\Omega))}\|q_k\|_{L^2(I, L^\infty(\Omega))} 
		\\
		&\quad+\|\partial_t^2(\overline p - p_{k+1})\|_{L^2(I, L^\infty(\Omega))}\|\overline q\|_{L^2(I, L^\infty(\Omega))}
		\\
		&\hspace{.1cm}\leq ((8C_0 + 4\overline C)(3k+5)! ^2 + (3c_1C_0 + c_1\overline C)(3k+5)!^4)(\|\overline\nu - \nu_{k+1}\|_{L^2(\Omega)} + \|\overline\nu - \nu_k\|_{L^2(\Omega)})
		\\
		&\hspace{-.3cm}\underbrace{\leq}_{\cref{assumptionk},\cref{claim nu k+1}} \frac{1}{4\delta}(8C_0 + 4\overline C + 3c_1C_0 + c_1\overline C)(3k+5)!^4(\underbrace{b_{k+1} + b_k}_{\leq 2b_{k-1}})
		\\
		&\hspace{.1cm}\leq\frac{1}{2\delta}(8C_0 + 4\overline C + 3c_1C_0 + c_1\overline C)(3k+5)!^4b_{k-1}\underbrace{\leq}_{\cref{ieq gamma bar}}\frac{1}{2\delta}(8C_0 + 4\overline C + 3c_1C_0 + c_1\overline C)\overline\gamma\underbrace{<}_{\cref{assumption gammas}} \frac{\tau}{c_L}.
	\end{align*}
	Therefore, by \cref{lemma: firstorderrho}, $(\nu_{k+1}, p_{k+1}, q_{k+1})$ solves \cref{system: linpertOC} with the perturbation terms \cref{perturbations iteration}. As a consequence, $(\nu_{k+1}, p_{k+1}, q_{k+1})$ satisfies \cref{iteration}. Assume that $(\tilde\nu_{k+1}, \tilde p_{k+1}, \tilde q_{k+1})\in\mathcal V_{ad}^\tau\times X_0\times X_T$ is another solution to \cref{iteration}. Then, $(\tilde\nu_{k+1}, \tilde p_{k+1}, \tilde q_{k+1})$ is also a fixed point of $S_k$, and consequently $\tilde\nu_{k+1}\in\mathcal V_{ad}^\tau$ is a fixed point of $(I_\nu\circ S_k\circ T_k)$. By the uniqueness of the fixed point of $(I_\nu\circ S_k\circ T_k)$ (see \cref{proposition contraction}), it follows that $\tilde\nu_{k+1} = \nu_{k+1}$. Furthermore, by the uniqueness of the solutions to the PDEs in \cref{iteration}, we obtain $\tilde p_{k+1} = p_{k+1}$ and $\tilde q_{k+1} = q_{k+1}$. Therefore, $(\nu_{k+1}, p_{k+1}, q_{k+1})$ is the unique solution to \cref{iteration} in $\mathcal V_{ad}^\tau\times X_0\times X_T$.
\end{proof}

We underline that \cref{proposition uniqueness} guarantees the existence of a unique solution of \cref{iteration} in $\mathcal V_{ad}^\tau \times X_0 \times X_T$. This result, however, does not imply uniqueness in $\mathcal V_{ad}\times X_0\times X_T$ since the first component of every solution to \cref{iteration} does not necessarily lie in $\mathcal V_{ad}^\tau$. In the following, we consider the subset 
\begin{equation}\label{defU}
	\overline{\mathcal U} \coloneqq \left\{(\nu,p,q) \in \mathcal V_{ad} \times X_0 \times X_T \quad | \quad \|\partial_t^2(\overline p -p)\|_{L^2(I,L^\infty(\Omega))} \leq \frac{c_1\overline \gamma}{\delta} , \quad \|\overline q- q\|_{L^2(I,L^\infty(\Omega))} \leq \frac{c_1\overline \gamma}{\delta} \right\}.
\end{equation}
We call $(\nu,p,q) \in \mathcal V_{ad}\times X_0\times X_T$ a solution to \cref{iteration} in $\overline{\mathcal U}$ if it solves \cref{iteration} and lies in $\overline{\mathcal U}$. As a consequence of \cref{proposition contraction,proposition uniqueness}, we show that \cref{iteration} turns to admit a unique solution in $\overline{\mathcal U}$.

\begin{corollary}\label{finalcor} 
	Under the assumptions of \cref{proposition uniqueness}, \cref{iteration} admits a unique solution in $\overline{\mathcal U}$ which coincides with the unique solution of \cref{iteration} in $\mathcal V_{ad}^\tau\times X_0\times X_T$. 
\end{corollary}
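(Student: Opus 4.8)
The plan is to show the two solution sets coincide, from which both existence and uniqueness in $\overline{\mathcal U}$ follow immediately from \cref{proposition contraction,proposition uniqueness}. The strategy is a two-sided inclusion: first that the (unique) solution in $\mathcal V_{ad}^\tau\times X_0\times X_T$ already lies in $\overline{\mathcal U}$, and second that any solution in $\overline{\mathcal U}$ must in fact lie in $\mathcal V_{ad}^\tau$.

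First I would verify that the solution $(\nu_{k+1},p_{k+1},q_{k+1})$ from \cref{proposition uniqueness} belongs to $\overline{\mathcal U}$. This requires bounding $\|\partial_t^2(\overline p - p_{k+1})\|_{L^2(I,L^\infty(\Omega))}$ and $\|\overline q - q_{k+1}\|_{L^2(I,L^\infty(\Omega))}$ by $c_1\overline\gamma/\delta$. These are exactly the estimates \cref{ieq 7,ieq 8} from \cref{lemma: estimates} (with $l=2$ and $l=0$ respectively, noting $(l+3k)!^2$ for the relevant small $l$ is harmless after absorbing into constants), which give the right-hand sides $c_1(3k+2)!^2(\|\overline\nu-\nu_k\|_{L^2(\Omega)}+\|\overline\nu-\nu_{k-1}\|_{L^2(\Omega)})$ and similarly for $\overline q - q_{k+1}$. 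Using \cref{assumptionk} together with the monotonicity $b_k\leq b_{k-1}$ (\cref{lemma sequences}) and the bound \cref{ieq gamma bar}, these products of factorials times $b_{k-1}$ collapse to a multiple of $\overline\gamma$, yielding the desired bound $c_1\overline\gamma/\delta$. Hence the $\mathcal V_{ad}^\tau$-solution is automatically a solution in $\overline{\mathcal U}$, which settles existence.

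For uniqueness in $\overline{\mathcal U}$, I would take an arbitrary solution $(\nu,p,q)\in\overline{\mathcal U}$ of \cref{iteration} and argue that it is in fact a fixed point of $(I_\nu\circ S_k\circ T_k)$ lying in $\mathcal V_{ad}^\tau$; the uniqueness of that fixed point (\cref{proposition contraction}) then forces it to equal $(\nu_{k+1},p_{k+1},q_{k+1})$. The key point is that $(p,q)=T_k(\nu)$ by the uniqueness of solutions to the PDE-systems in \cref{iteration}, so the pair solves \cref{system: linpertOC} with the perturbation terms of the form \cref{perturbations iteration} (with $\nu_{k+1}$ replaced by $\nu$). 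The task is then to show the smallness condition \cref{ieq: perturbation bound} of \cref{lemma: firstorderrho} still holds, so that $(\nu,p,q)$ solves \cref{system: linpertOC2}, i.e. $\nu\in\mathcal V_{ad}^\tau$. This is where the defining bounds of $\overline{\mathcal U}$ enter: the terms in $\rho^{st},\rho^{adj},\rho^{VI}$ involving $\partial_t^2(\overline p - p)$ and $\overline q - q$ are controlled directly by $c_1\overline\gamma/\delta$, while the remaining terms are controlled as in \cref{proposition uniqueness} via \cref{lemma: estimates} and \cref{assumptionk}; together with \cref{assumption gammas} their sum stays below $\tau/c_L$.

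The main obstacle I anticipate is the uniqueness direction, specifically certifying that a generic $\overline{\mathcal U}$-solution satisfies the perturbation bound \cref{ieq: perturbation bound}. Unlike in \cref{proposition uniqueness}, here we cannot invoke the a priori estimate \cref{claim nu k+1} on $\|\nu-\overline\nu\|_{L^2(\Omega)}$, since $\nu$ is only known to lie in $\overline{\mathcal U}$ rather than to be the contraction's fixed point. The resolution is precisely that the two defining inequalities of $\overline{\mathcal U}$ supply exactly the $L^2(I,L^\infty(\Omega))$-control on $\partial_t^2(\overline p - p)$ and $\overline q - q$ needed to bound those perturbation contributions, so that $\overline{\mathcal U}$ has been engineered as the correct ``trapping region'' making \cref{lemma: firstorderrho} applicable. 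Once \cref{ieq: perturbation bound} is established, the identification with the $\mathcal V_{ad}^\tau$-fixed point and hence the full claim follow.
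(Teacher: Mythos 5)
Your existence half is correct and is exactly the paper's argument: apply \cref{ieq 7,ieq 8} of \cref{lemma: estimates} (with $k$ replaced by $k+1$) together with \cref{assumptionk}, \cref{ieq: quadratic1}/\cref{ieq: quadratic2}, the monotonicity of $\{b_n\}$, and \cref{ieq gamma bar} to place the $\mathcal V_{ad}^\tau$-solution inside $\overline{\mathcal U}$.

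The uniqueness half, however, has a genuine gap: you invoke \cref{lemma: firstorderrho} in the wrong direction. That lemma states that the unique solution of \cref{system: linpertOC2} (VI over $\mathcal V_{ad}^\tau$) \emph{also satisfies} \cref{system: linpertOC} (VI over $\mathcal V_{ad}$) when the perturbation is small; it does \emph{not} state that a solution of \cref{system: linpertOC} with small perturbation solves \cref{system: linpertOC2}, and in particular it cannot deliver the conclusion ``$\nu\in\mathcal V_{ad}^\tau$'' for your $\overline{\mathcal U}$-solution. Verifying \cref{ieq: perturbation bound} for the solution-dependent perturbation terms only tells you that the associated $\text{OS}^\tau$-solution $(\nu_\rho,p_\rho,q_\rho)$ is \emph{another} solution of \cref{system: linpertOC} with the same data; to identify it with $(\nu,p,q)$ you would need uniqueness of solutions to \cref{system: linpertOC} over the full set $\mathcal V_{ad}$, which is precisely what is unavailable here — the coercivity in \cref{ssc'} holds only for differences in $C_{\overline\nu}^\tau$, so two arbitrary $\mathcal V_{ad}$-solutions cannot be compared (this is the very reason the $\tau$-modification exists). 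The paper closes this gap by a direct pointwise argument instead: using \cref{nuoptim} (i.e.\ $\overline\nu=\nu_-$ on $\mathscr A_\tau^+(\overline\nu)$, $\overline\nu=\nu_+$ on $\mathscr A_\tau^-(\overline\nu)$, recalled from the proof of \cref{lemma: firstorderrho}), the two defining bounds of $\overline{\mathcal U}$, the estimates \cref{ieq 7,ieq 8} (resp. \cref{assumption4} when $k=0$), and the smallness \cref{assumption gammas}, it shows that the gradient expression $-\int_0^T\partial_t^2 p_k q + \partial_t^2(p-p_k)q_k\,\d t + \lambda\nu$ appearing in the VI of \cref{iteration} is strictly positive a.e.\ on $\mathscr A_\tau^+(\overline\nu)$ and strictly negative a.e.\ on $\mathscr A_\tau^-(\overline\nu)$; the standard pointwise characterization of variational inequalities (\cite[Lemma 2.26]{troeltzsch10}) then forces $\nu=\nu_-$ on $\mathscr A_\tau^+(\overline\nu)$ and $\nu=\nu_+$ on $\mathscr A_\tau^-(\overline\nu)$, hence $\nu=\overline\nu$ on $\mathscr A_\tau(\overline\nu)$, i.e.\ $\nu\in\mathcal V_{ad}^\tau$. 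Your intuition that the $\overline{\mathcal U}$-bounds are the engineered ``trapping'' quantities is right, but they must be fed into this sign argument on the strongly active sets, not into \cref{lemma: firstorderrho}.
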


\begin{proof}
	Let $k \in \mathbb N_0$ and $(\nu_{k+1}, p_{k+1}, q_{k+1}) \in \mathcal V_{ad}^\tau \times X_0 \times X_T$ denote the unique solution to \cref{iteration} in $\mathcal V_{ad}^\tau \times X_0 \times X_T$ according to \cref{proposition uniqueness}. By \cref{assumption4} and \cref{ieq: quadratic1} (if $k=0$) as well as \cref{assumptionk} and \cref{ieq: quadratic2} (if $k \ge 1$), it holds that 
	\begin{equation}\label{proofuni1}
		\|\nu_{k}- \overline\nu\|_{L^2(\Omega)}\leq \frac{1}{4\delta}b_{k}, \quad \|\nu_{k+1}- \overline\nu\|_{L^2(\Omega)}\leq \frac{1}{4\delta}b_{k+1}
	\end{equation}
	with $b_k, b_{k+1}$ as in \cref{lemma sequences}. Furthermore, in view of \cref{proofuni1} in combination with \cref{lemma: estimates} (\cref{ieq 7}-\cref{ieq 8} with $k$ replaced by $k+1$) and since $\{b_n\}_{n\in\mathbb N_0}$ is decreasing, we have that 
	\begin{align*}
		&\|\partial_t^2(\overline p -p_{k+1})\|_{L^2(I,L^\infty(\Omega))}\underbrace{\leq}_{\cref{ieq 7}, \cref{proofuni1}} \frac{c_1}{2\delta}(3k+5)!^2 b_{k-1} \underbrace{\le}_{\cref{ieq gamma bar}} \frac{c_1\overline \gamma}{2\delta} \phantom{.} 
		\\
		&\| \overline q - q_{k+1}\|_{L^2(I,L^\infty(\Omega))} \qquad \underbrace{\leq}_{\cref{ieq 8}, \cref{proofuni1}} \frac{c_1}{2\delta}(3k+1)!^2 b_{k-1} \underbrace{\le}_{\cref{ieq gamma bar}} \frac{c_1\overline \gamma}{2\delta} . 
	\end{align*}
	Thus, we come to the conclusion that $(\nu_{k+1}, p_{k+1}, q_{k+1})\in\overline{\mathcal U}$, i.e., it is a solution to \cref{iteration} in $\overline{\mathcal U}$. Now, suppose that $(\nu,p,q)\in\overline{\mathcal U}$ is another solution to \cref{iteration}. It remains to prove that $\nu\in\mathcal V_{ad}^\tau$, from which it follows that $(\nu,p,q)=(\nu_{k+1}, p_{k+1}, q_{k+1})$, and so the uniqueness holds. To verify $\nu \in \mathcal V_{ad}^\tau$, we recall from the proof of \cref{lemma: firstorderrho} that 
	\begin{equation}\label{nuoptim}
		\overline\nu = \nu_- \text{ a.e. in } \mathscr A_\tau^+(\overline\nu) \quad \text{and} \quad \overline\nu = \nu_+ \text{ a.e. in }\mathscr A_\tau^-(\overline\nu)
	\end{equation}
	with $\mathscr A_\tau^+(\overline\nu) = \{x\in\Omega : -\int_{0}^{T}\partial_t^2\overline p(t,x)\overline q(t,x)\d t + \lambda\overline\nu(x) >\tau \}$ and $\mathscr A_\tau^-(\overline\nu) = \{x\in\Omega : -\int_{0}^{T}\partial_t^2\overline p(t,x)\overline q(t,x)\d t + \lambda\overline\nu(x) <-\tau \}$. Therefore, for a.e. $x\in\mathscr A_\tau^+(\overline\nu)$, we have that
	\begin{align}\label{proofuni2}
		\tau 
		&<-\int_{0}^{T}\partial_t^2\overline p(t,x) \overline q(t,x)\d t + \lambda \nu_-
		\\\notag
		&= -\int_0^T\partial_t^2 p_k(t,x) q(t,x) + \partial_t^2(p(t,x)- p_k(t,x))q_k(t,x)\d t + \lambda \nu(x) + \lambda (\nu_- - \nu(x))
		\\ \notag
		&\quad+ \int_0^T\!\partial^2_t p_k(t,x) (q(t,x) \!-\! \overline q (t,x)) \!+\! \partial_t^2(p(t,x) \!-\! p_k(t,x))(q_k(t,x) \!-\! \overline q (t,x)) \!+\! \partial_t^2 (p(t,x)\!-\!\overline p (t,x)) \overline q(t,x) \d t
		\\ \notag
		&\leq -\int_0^T\partial_t^2 p_k(t,x) q(t,x) + \partial_t^2(p(t,x)- p_k(t,x))q_k(t,x)\d t + \lambda \nu(x) 
		\\\notag 
		&\quad + ( \| \partial^2_t ( p_k -\overline p) \|_{L^2(I,L^\infty(\Omega))} + \| \partial^2_t \overline p \|_{L^2(I,L^\infty(\Omega))} )\|q- \overline q \|_{L^2(I,L^\infty(\Omega))} 
		\\\notag 
		&\quad + ( \| \partial^2_t ( p - \overline p) \|_{L^2(I,L^\infty(\Omega))} + \| \partial^2_t (\overline p - p_k) \|_{L^2(I,L^\infty(\Omega))} )\|q_k- \overline q \|_{L^2(I,L^\infty(\Omega))} \\\notag
		&\quad+ \| \partial^2_t (p -\overline p) \|_{L^2(I,L^\infty(\Omega))} \|\overline q \|_{L^2(I,L^\infty(\Omega))}.
	\end{align}
	If $k \ge 1$, we obtain from \cref{lemma: estimates} and the monotonicity of $\{b_n\}_{n\in\mathbb N_0}$ that
	\begin{align}\label{proofuni3}
		\|\partial_t^2( p_k -\overline p)\|_{L^2(I,L^\infty(\Omega))}
		&\underbrace{\leq}_{\cref{ieq 7}, \cref{assumptionk}} \frac{c_1}{2\delta}(3k+2)!^2 b_{k-1}\underbrace{\leq}_{\cref{ieq gamma bar}} \frac{c_1 \overline \gamma}{2 \delta} 
		\\\notag
		\| q_k -\overline q\|_{L^2(I,L^\infty(\Omega))}
		&\underbrace{\leq}_{\cref{ieq 8}, \cref{assumptionk}} \frac{c_1}{2\delta}(3k)!^2 b_{k-1} \leq \frac{c_1}{4\delta}(3k+5)!^4 b_{k-1} \underbrace{\le}_{\cref{ieq gamma bar}}\frac{c_1 \overline \gamma}{4 \delta}
	\end{align}	
	If $k=0$, \cref{assumption4} yields 
	\begin{equation}\label{proofuni4}
		\|\partial_t^2( p_0 -\overline p)\|_{L^2(I,L^\infty(\Omega))} \leq 4 \varepsilon \underbrace{\le}_{\cref{assumption epsilon}} \frac{c_1 \overline \gamma}{2 \delta} \quad \text{and} \quad \| q_0 - \overline q\|_{L^2(I,L^\infty(\Omega))} \leq \varepsilon \leq \frac{c_1 \overline \gamma}{8 \delta}
	\end{equation}	
	Applying \cref{defU}, \cref{proofuni3}, and \cref{proofuni4} to \cref{proofuni2} implies for a.e. $x\in\mathscr A_\tau^+(\overline\nu)$ that 
	\begin{align*} 
		\tau 
		&< -\int_0^T\partial_t^2 p_k(t,x) q(t,x) + \partial_t^2(p(t,x)- p_k(t,x))q_k(t,x)\d t + \lambda \nu(x) 
		\\
		&\qquad \qquad + ( \frac{c_1 \overline \gamma}{2 \delta} + \| \partial^2_t \overline p \|_{L^2(I,L^\infty(\Omega))}) \frac{c_1 \overline \gamma}{ \delta}+ ( \frac{c_1 \overline \gamma}{ \delta} + \frac{c_1 \overline \gamma}{2\delta} ) \frac{c_1 \overline \gamma}{4\delta} + \frac{c_1 \overline \gamma}{ \delta} \|\overline q \|_{L^2(I,L^\infty(\Omega))} 
		\\
		&\underbrace{\leq}_{\cref{assumption4}, \, \, \overline \gamma \leq 1} -\int_0^T\partial_t^2 p_k(t,x) q(t,x) + \partial_t^2(p(t,x)- p_k(t,x))q_k(t,x)\d t + \lambda \nu(x) + \overline \gamma ( 5 \frac{c_1 }{ \delta} \overline C + \frac{c_1^2 }{\delta^2}).
	\end{align*}	
	Since according to \cref{assumption gammas}, $\overline \gamma \leq \tau / ( 5 \frac{c_1 }{ \delta} \overline C + \frac{c_1^2 }{\delta^2} )$, it follows that 
	\begin{equation} \label{proofuni5}
		-\int_0^T\partial_t^2 p_k(t,x) q(t,x) + \partial_t^2(p(t,x)- p_k(t,x))q_k(t,x)\d t + \lambda \nu(x) >0 \quad \text{ for a.e. } x\in\mathscr A_\tau^+(\overline\nu).
	\end{equation}
	In a completely analogous way, we deduce from the definition of $\mathscr A_\tau^-(\overline\nu) $ and \cref{nuoptim} that 
	\begin{equation}\label{proofuni6}
		-\int_0^T\partial_t^2 p_k(t,x) q(t,x) + \partial_t^2(p(t,x)- p_k(t,x))q_k(t,x)\d t + \lambda \nu(x) <0 \quad \text{ for a.e. } x\in\mathscr A_\tau^-(\overline\nu).
	\end{equation}
	Now, since $\nu$ satisfies the variational inequality in \cref{iteration}, a well-known result (see \cite[Lemma 2.26]{troeltzsch10}) imply due to \cref{proofuni5} and \cref{proofuni6} that 
	\begin{equation*}
		\nu = \nu_- \text{ a.e. in } \mathscr A_\tau^+(\overline\nu) \quad \text{and} \quad \nu = \nu_+ \text{ a.e. in }\mathscr A_\tau^-(\overline\nu) \quad \underbrace{\Rightarrow}_{\cref{nuoptim}} \quad \nu \in \mathcal V_{ad}^\tau.
	\end{equation*}
	In conclusion, the assertion is valid. 
\end{proof}

Differently from the parabolic case, we cannot prove the quadratic (Q-)convergence of \cref{alg:sqp}. As a remedy, the proposed two-step estimation process \cref{ieq: quadratic2} eventually enables us to prove R-superlinear convergence, i.e., the error is dominated by some scalar-valued sequence converging superlinearly to zero \cite[page 620]{nocedal99}. This final result is proven in the following by making use of the previous propositions and corollary:

\begin{theorem}
	\label{theorem: convergence}
	Let \cref{assumption4} be satisfied. Then, \cref{alg:sqp} generates a sequence $\{(\nu_{k+1},p_{k+1}, q_{k+1})\}_{k \in \mathbb N_0}$ of unique solutions to \cref{iteration} in $\overline{\mathcal U}$ satisfying
	\begin{equation*}
		\|\nu_{k+1} - \overline\nu\|_{L^2(\Omega)}\leq \delta(3k+5)!^4(\|\overline\nu - \nu_k\|_{L^2(\Omega)} + \|\overline\nu - \nu_{k-1}\|_{L^2(\Omega)})^2 \quad \forall k\in\mathbb N.
	\end{equation*}
	Furthermore, $\{\nu_k\}_{k\in\mathbb N_0}$ converges R-superlinearly towards the solution $\overline\nu$ to \cref{P} with
	\begin{equation}\label{convergence2}
		\|\nu_k- \overline\nu\|_{L^2(\Omega)}\leq\frac{1}{4\delta}\overline\gamma^{\sqrt{2}^k}\quad\forall k\in\mathbb N.
	\end{equation}
\end{theorem}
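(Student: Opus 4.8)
The plan is to argue by induction on $k$, establishing at each stage that the iterate produced by \cref{proposition contraction,proposition uniqueness,finalcor} satisfies the hypotheses \cref{assumptionk} needed to generate the subsequent iterate. Once the induction is in place, the two-step quadratic estimate is just \cref{ieq: quadratic2}, and the R-superlinear rate \cref{convergence2} is read off from the sequence $\{b_k\}$ via \cref{lemma sequences}.

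For the base case $k=0$, \cref{assumption3} supplies $(\nu_0,p_0,q_0)\in\mathcal V_{ad}\times X_0\times X_T$ and \cref{assumption4} guarantees $\|\nu_0-\overline\nu\|_{L^2(\Omega)}\leq\varepsilon\leq\frac{\gamma}{4\delta}=\frac{1}{4\delta}b_0$ together with the factorial bounds on $\partial_t^l p_0,\partial_t^l q_0$. By the $k=0$ part of \cref{proposition contraction}, the map $I_\nu\circ S_0\circ T_0$ possesses a unique fixed point $\nu_1\in\mathcal V_{ad}^\tau$ with $\|\nu_1-\overline\nu\|_{L^2(\Omega)}\leq\frac{1}{4\delta}b_1$, and \cref{proposition uniqueness} together with \cref{finalcor} identifies $(\nu_1,p_1,q_1)\coloneqq T_0(\nu_1)$ as the unique solution to \cref{iteration} for $k=0$ in $\overline{\mathcal U}$. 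In particular $p_1,q_1$ solve \cref{iterationstate,iterationadjoint} at index $k=1$.

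For the inductive step, suppose that for some $k\in\mathbb N$ the quantities $\nu_k$ and $(\nu_{k-1},p_{k-1},q_{k-1})$ satisfy \cref{assumptionk} and that $p_k,q_k$ solve \cref{iterationstate,iterationadjoint}. Then \cref{proposition contraction} furnishes the unique fixed point $\nu_{k+1}\in\mathcal V_{ad}^\tau$ of $I_\nu\circ S_k\circ T_k$ obeying the quadratic estimate \cref{ieq: quadratic2} and the bound $\|\nu_{k+1}-\overline\nu\|_{L^2(\Omega)}\leq\frac{1}{4\delta}b_{k+1}$, while \cref{proposition uniqueness,finalcor} show that $(\nu_{k+1},p_{k+1},q_{k+1})\coloneqq T_k(\nu_{k+1})$ is the unique solution to \cref{iteration} in $\overline{\mathcal U}$. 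To continue the induction I verify \cref{assumptionk} with $k+1$ in place of $k$: the memberships $\nu_{k+1},\nu_k\in\mathcal V_{ad}$ and $p_k\in X_0$, $q_k\in X_T$ are immediate, the two bounds $\|\nu_{k+1}-\overline\nu\|_{L^2(\Omega)}\leq\frac{1}{4\delta}b_{k+1}$ and $\|\nu_k-\overline\nu\|_{L^2(\Omega)}\leq\frac{1}{4\delta}b_k$ are already in hand, and---this is the decisive point---the factorial growth condition $\max\{\|\partial_t^lp_k\|_{L^2(I,L^\infty(\Omega))},\|\partial_t^lq_k\|_{L^2(I,L^\infty(\Omega))}\}\leq C_0(l+3k)!^2$ demanded by \cref{assumptionk} at level $k+1$ is precisely the estimate \cref{pk claim} proved in \cref{lemma: estimates}. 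Hence \cref{assumptionk} holds at level $k+1$, the freshly produced $p_{k+1},q_{k+1}$ solve \cref{iterationstate,iterationadjoint} at index $k+1$, and the induction proceeds.

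Altogether the induction yields, for every $k\in\mathbb N_0$, a unique solution $(\nu_{k+1},p_{k+1},q_{k+1})$ to \cref{iteration} in $\overline{\mathcal U}$ satisfying the two-step quadratic estimate together with $\|\nu_k-\overline\nu\|_{L^2(\Omega)}\leq\frac{1}{4\delta}b_k$. Invoking \cref{bk overline gamma} from \cref{lemma sequences} gives $b_k\leq\overline\gamma^{\sqrt{2}^k}$, which is exactly \cref{convergence2}; and since $\{b_k\}_{k\in\mathbb N_0}$ converges R-superlinearly to zero by \cref{lemma sequences}, so does $\{\|\nu_k-\overline\nu\|_{L^2(\Omega)}\}_{k\in\mathbb N_0}$. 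The only genuine difficulty is the bookkeeping in the inductive step: one must keep the index shifts in the factorial bounds $(l+3k)!^2$ consistent between what \cref{assumptionk} at level $k+1$ requires and what \cref{pk claim} delivers, and likewise propagate the $b_k$-bounds through \cref{lemma sequences}. All the genuinely analytic work---the contraction property, the perturbation estimates, and the factorial growth control---has already been carried out in the preceding propositions and lemmata, so no new estimate is required here.
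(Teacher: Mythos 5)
Your proposal is correct and follows essentially the same route as the paper's own proof: an induction in which \cref{proposition contraction,proposition uniqueness,finalcor} produce the next iterate, \cref{pk claim} from \cref{lemma: estimates} supplies exactly the factorial bound needed to verify \cref{assumptionk} at level $k+1$, and \cref{lemma sequences} (via \cref{bk overline gamma}) converts the $b_k$-bounds into \cref{convergence2} and the R-superlinear rate. The only cosmetic difference is that the paper spells out the $k=1$ step separately before the general induction, whereas you fold it into a uniform inductive step starting at $k=1$; the content is identical.
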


\begin{proof}
	Due to \cref{proposition contraction,proposition uniqueness} and \cref{finalcor}, \hyperref[iteration]{\textnormal{($\mathbb P_0$)}} admits a unique solution $(\nu_1, p_1, q_1)$ in $\overline {\mathcal U}$ satisfying
	\begin{equation}\label{ieq nu1}
		\|\overline\nu - \nu_1\|_{L^2(\Omega)}\leq \frac{1}{4\delta}b_1\underbrace{\leq}_{\cref{bk overline gamma}}\frac{1}{4\delta}\overline\gamma^{\sqrt{2}} 
	\end{equation}
	 with $b_k$ as in \cref{lemma sequences}. In view of \cref{ieq nu1} and \cref{assumption4}, $\nu_1$ and $(\nu_0, p_0, q_0)$ satisfy \cref{assumptionk} for $k=1$ such that \cref{proposition contraction,proposition uniqueness} and \cref{finalcor} imply 
	 that \cref{iteration} for $k=1$ admits a unique solution $(\nu_2, p_2,q_2)$ in $\overline{\mathcal U}$ satisfying
	\begin{equation*}
		\|\overline\nu - \nu_2\|_{L^2(\Omega)}\leq \delta 8!^4(\|\overline\nu - \nu_1\|_{L^2(\Omega)} + \|\overline\nu - \nu_0\|_{L^2(\Omega)})^2,\qquad \|\overline\nu - \nu_2\|_{L^2(\Omega)}\leq\frac{1}{4\delta}b_2 \underbrace{\leq}_{\cref{bk overline gamma}} \frac{1}{4\delta}\overline\gamma^{\sqrt{2}^{2}}.	\end{equation*} 
	Moreover, since $\nu_1$ and $(\nu_0, p_0, q_0)$ satisfy \cref{assumptionk} for $k=1$, \cref{lemma: estimates} implies $\|\partial_t^lp_1\|_{L^2(I,L^\infty(\Omega))} \leq C_0(l+3)!^2$ and $\|\partial_t^lq_1\|_{L^2(I,L^\infty(\Omega))} \leq C_0(l+3)!^2$ for all $l\in\mathbb N_0$. Thus, $\nu_2$ and $(\nu_1,p_1,q_1)$ satisfies \cref{assumptionk} for $k=2$. Now, suppose that $k\geq 2$, $(\nu_{k},p_k,q_k)$ is the unique solution of $(\mathbb P_{k-1})$, and $\nu_k$ and $(\nu_{k-1}, p_{k-1}, q_{k-1})$ satisfies \cref{assumptionk}. Then, \cref{proposition contraction}, \cref{proposition uniqueness}, and \cref{finalcor} imply that \cref{iteration} admits a unique solution $(\nu_{k+1}, p_{k+1},q_{k+1})$ in $\overline{\mathcal U}$ satisfying
	\begin{equation*}
		\|\overline\nu - \nu_{k+1}\|_{L^2(\Omega)}\leq\delta (3k+5)!^4(\|\nu_k - \overline\nu\|_{L^2(\Omega)} + \|\nu_{k-1} - \overline\nu\|_{L^2(\Omega)})^2,\quad\|\overline\nu - \nu_{k+1}\|_{L^2(\Omega)}\leq\frac{1}{4\delta}b_{k+1}\underbrace{\leq}_{\cref{bk overline gamma}} \frac{1}{4\delta}\overline\gamma^{\sqrt{2}^{k+1}}.
	\end{equation*}
	Again, since $\nu_k$ and $(\nu_{k-1}, p_{k-1}, q_{k-1})$ satisfies \cref{assumptionk}, \cref{lemma: estimates} implies that $\|\partial_t^lp_k\|_{L^2(I,L^\infty(\Omega))} \leq C_0(l+3k)!^2$ and $\|\partial_t^lq_k\|_{L^2(I,L^\infty(\Omega))} \leq C_0(l+3k)!^2$ for all $l\in\mathbb N_0$. Thus, $\nu_{k+1}$ and $(\nu_{k},p_k,q_k)$ satisfies $(\textnormal{A}_{k+1})$. In conclusion, the claim follows by induction.
\end{proof}

\subsection*{Acknowledgments} 
The authors are very thankful to Prof. Daniel Wachsmuth (University of W\"urzburg) for the insightful discussion and his valuable suggestion regarding a relaxion of the previous version of \cref{assumption4}. Furthermore, special thanks go to Prof. Mariano Mateos (University of Oviedo) for the insightful discussion regarding the uniqueness in the SQP algorithm that motivates us to establish \cref{finalcor}.

\appendix
\section{Appendix}

\subsection{Proof of \texorpdfstring{\cref{Lipschitzproperties}}{} in \texorpdfstring{\cref{theorem:lipschitzproperty}}{} }	
\label{subsection proof pertubation}

From \cref{ieq:nurho00}, we know that
\begin{align}
	\label{ieq:nurho}
		&\alpha\|\nu_\rho - \nu_{\widetilde\rho}\|_{L^2(\Omega)}^2\leq D_{(\nu,p)}^2\mathcal L(\overline \nu, \overline p, \overline q)(\nu_\rho - \nu_{\widetilde\rho},p_\rho - p_{\widetilde\rho} - \hat p_{\rho,\widetilde\rho})^2
		\\\notag
		&\underbrace{=}_{\cref{def: lagrangian}}\sum_{i=1}^m(a_i (p_\rho - p_{\widetilde\rho}),p_\rho - p_{\widetilde\rho})_{L^2(I,L^2(\Omega))} + \lambda\|\nu_\rho - \nu_{\widetilde\rho}\|_{L^2(\Omega)}^2 - 2((\nu_\rho - \nu_{\widetilde\rho})\partial_t^2(p_\rho - p_{\widetilde\rho}),\overline q)_{L^2(I,L^2(\Omega))}
		\\\notag
		&\hspace{.8cm} + \sum_{i=1}^m(a_i \hat p_{\rho,\widetilde\rho},\hat p_{\rho,\widetilde\rho})_{L^2(I,L^2(\Omega))} - 2\sum_{i=1}^m(a_i \hat p_{\rho,\widetilde\rho},(p_\rho - p_{\widetilde\rho}))_{L^2(I,L^2(\Omega))} + 2((\nu_\rho - \nu_{\widetilde\rho})\hat p_{\rho,\widetilde\rho},\partial_t^2\overline q)_{L^2(I,L^2(\Omega))}.
\end{align} 
For the first term on the right-hand side of \cref{ieq:nurho}, it holds that
\begin{equation*}
	\begin{aligned}
		&\sum_{i=1}^m(a_i(p_\rho - p_{\widetilde\rho}),p_\rho - p_{\widetilde\rho})_{L^2(I,L^2(\Omega))} 
		\\
		&\underbrace{=}_{\cref{system: adjointdifference}}(\overline\nu\partial_t^2(q_\rho- q_{\widetilde\rho}) - \Delta(q_\rho- q_{\widetilde\rho}) - \eta\partial_t(q_\rho- q_{\widetilde\rho}) + (\nu_\rho- \nu_{\widetilde\rho})\partial_t^2\overline q - \rho^{adj} + \widetilde\rho^{adj},p_\rho - p_{\widetilde\rho})_{L^2(I,L^2(\Omega))}
		\\
		&\hspace{1.4mm}=(q_\rho - q_{\widetilde\rho},\overline\nu\partial_t^2(p_\rho - p_{\widetilde\rho}) - \Delta(p_\rho - p_{\widetilde\rho}) + \eta\partial_t(p_\rho - p_{\widetilde\rho}))_{L^2(I, L^2(\Omega))} + (\overline q, (\nu_\rho- \nu_{\widetilde\rho})\partial_t^2(p_\rho - p_{\widetilde\rho}))_{L^2(I,L^2(\Omega))}
		\\
		&\qquad - (\rho^{adj} - \widetilde\rho^{adj},p_\rho - p_{\widetilde\rho})_{L^2(I,L^2(\Omega))}
		\\
		&\underbrace{=}_{\cref{system: statedifference}}(q_\rho - q_{\widetilde\rho},-(\nu_\rho - \nu_{\widetilde\rho})\partial_t^2\overline p + \rho^{st} - \widetilde\rho^{st}) + (\overline q,(\nu_\rho- \nu_{\widetilde\rho})\partial_t^2(p_\rho - p_{\widetilde\rho}))_{L^2(I,L^2(\Omega))} - (\rho^{adj} - \widetilde\rho^{adj},p_\rho - p_{\widetilde\rho})_{L^2(I,L^2(\Omega))}.
	\end{aligned}
\end{equation*}
Applying this identity to \cref{ieq:nurho}, we obtain that
\begin{align}
	\label{ieq: ssc}
		&\alpha\|\nu_\rho - \nu_{\widetilde\rho}\|_{L^2(\Omega)}^2
		\\\notag
		&\leq -((\nu_\rho - \nu_{\widetilde\rho})\partial_t^2\overline p,q_\rho - q_{\widetilde\rho})_{L^2(I,L^2(\Omega))} + (q_\rho - q_{\widetilde\rho},\rho^{st} - \widetilde\rho^{st})_{L^2(I,L^2(\Omega))} - (\rho^{adj} - \widetilde\rho^{adj},p_\rho - p_{\widetilde\rho})_{L^2(I,L^2(\Omega))} 
		\\\notag
		&\quad + \lambda\|\nu_\rho - \nu_{\widetilde\rho}\|_{L^2(\Omega)}^2 - ((\nu_\rho- \nu_{\widetilde\rho})\partial_t^2(p_\rho - p_{\widetilde\rho}),\overline q)_{L^2(I,L^2(\Omega))}+ \sum_{i=1}^m(a_i \hat p_{\rho,\widetilde\rho},\hat p_{\rho,\widetilde\rho})_{L^2(I,L^2(\Omega))}
		\\\notag
		&\quad - 2\sum_{i=1}^m(a_i \hat p_{\rho,\widetilde\rho},(p_\rho - p_{\widetilde\rho}))_{L^2(I,L^2(\Omega))} + 2((\nu_\rho - \nu_{\widetilde\rho})\hat p_{\rho,\widetilde\rho},\partial_t^2\overline q)_{L^2(I,L^2(\Omega))}.
\end{align} 
Testing the variational inequality in \cref{system: linpertOC2} for $(\nu_\rho,p_\rho,q_\rho)$ (resp. $(\nu_{\widetilde\rho}, p_{\widetilde\rho}, q_{\widetilde\rho})$) with $\tilde\nu = \nu_{\widetilde\rho}$ (resp. $\tilde\nu = \nu_\rho$) and adding the resulting two inequalites leads to
\begin{equation*}
	\left(-\int_0^T\partial_t^2\overline p(t)(q_\rho(t) - q_{\widetilde\rho}(t)) + \partial_t^2(p_\rho(t) - p_{\widetilde\rho}(t))\overline q(t)\d t + \lambda(\nu_\rho -\nu_{\widetilde\rho}), \nu_{\widetilde\rho} - \nu_\rho\right)_{L^2(\Omega)}\geq (\rho^{VI}-\widetilde\rho^{VI},\nu_{\widetilde\rho}-\nu_{\rho})_{L^2(\Omega)}.
\end{equation*}
Rearranging yields that
\begin{equation}\label{ieq: vi}
	\lambda\|\nu_\rho - \nu_{\widetilde\rho}\|_{L^2(\Omega)}^2 - ((\nu_\rho-\nu_{\widetilde\rho})\partial_t^2\overline p ,q_\rho- q_{\widetilde\rho})_{L^2(I,L^2(\Omega))} - ((\nu_\rho -\nu_{\widetilde\rho})\partial_t^2(p_\rho -p_{\widetilde\rho}),\overline q)_{L^2(I,L^2(\Omega))} \leq (\rho^{VI}-\widetilde\rho^{VI},\nu_\rho-\nu_{\widetilde\rho})_{L^2(\Omega)}.
\end{equation}
Combining \cref{ieq: ssc} and \cref{ieq: vi}, we obtain that
\begin{align}
	\label{ieq: alpha1}
	&\alpha\|\nu_\rho - \nu_{\widetilde\rho}\|_{L^2(\Omega)}^2
	\\\notag
	&\leq (q_\rho - q_{\widetilde\rho},\rho^{st} - \widetilde\rho^{st})_{L^2(I,L^2(\Omega))} - (\rho^{adj} - \widetilde\rho^{adj},p_\rho - p_{\widetilde\rho})_{L^2(I,L^2(\Omega))} + (\rho^{VI}-\widetilde\rho^{VI},\nu_\rho-\nu_{\widetilde\rho})_{L^2(\Omega)}
	\\\notag
	&\quad+ \sum_{i=1}^m(a_i \hat p_{\rho,\widetilde\rho},\hat p_{\rho,\widetilde\rho})_{L^2(I,L^2(\Omega))} - 2\sum_{i=1}^m(a_i \hat p_{\rho,\widetilde\rho},(p_\rho - p_{\widetilde\rho}))_{L^2(I,L^2(\Omega))}+ 2((\nu_\rho - \nu_{\widetilde\rho})\partial_t^2\hat p_{\rho,\widetilde\rho},\overline q)_{L^2(I,L^2(\Omega))}
	\\\notag
	&\leq \|q_\rho - q_{\widetilde\rho}\|_{L^2(I,L^2(\Omega))}\|\rho^{st} - \widetilde\rho^{st}\|_{L^2(I,L^2(\Omega))} + \|\rho^{adj} - \widetilde\rho^{adj}\|_{L^2(I,L^2(\Omega))}\|p_\rho - p_{\widetilde\rho}\|_{L^2(I,L^2(\Omega))} 
	\\\notag
	&\quad + \|\rho^{VI}-\widetilde\rho^{VI}\|_{L^2(\Omega)}\|\nu_\rho-\nu_{\widetilde\rho}\|_{L^2(\Omega)} + \sum_{i=1}^{m}\|a_i\|_{L^\infty(I, L^\infty(\Omega))}\|\hat p_{\rho,\widetilde\rho}\|_{L^2(I, L^2(\Omega))}^2 
	\\\notag
	&\quad+ 2\sum_{i=1}^{m}\|a_i\|_{L^\infty(I, L^\infty(\Omega))}\|\hat p_{\rho,\widetilde\rho}\|_{L^2(I, L^2(\Omega))}\|p_\rho - p_{\widetilde\rho}\|_{L^2(I,L^2(\Omega))} 
	\\\notag
	&\quad+ 2\|\nu_\rho - \nu_{\widetilde\rho}\|_{L^2(\Omega)}\|\hat p_{\rho,\widetilde\rho}\|_{L^2(I, L^2(\Omega))}\|\partial_t^2\overline q\|_{L^2(I,L^\infty(\Omega))}.
\end{align}
Applying \cref{lemma: abstract} to \cref{prhorho} yields for $G(t)\coloneqq\int_0^{t}\rho^{st}(s) - \widetilde\rho^{st}(s)ds$ that 
\begin{equation}\label{ieq: alpha2}
	\|\hat p_{\rho,\widetilde\rho}\|_{L^2(I, L^2(\Omega))}\leq \sqrt{T}c\|G\|_{L^1(I, L^2(\Omega))}\leq T^{\frac{3}{2}}c\|\rho^{st} - \widetilde\rho^{st}\|_{L^1(I, L^2(\Omega))} \leq T^2c\|\rho^{st} - \widetilde\rho^{st}\|_{L^2(I, L^2(\Omega))}
\end{equation}
with $c\coloneqq \nu_{\min}^{-1}\frac{\max\left\{\sqrt{\nu_{\max}},1\right\}}{\min\{\sqrt{\nu_{\min}},1\}}$. Analogously, applying \cref{lemma: abstract} to \cref{system: statedifference} and \cref{system: adjointdifference}, we obtain that
\begin{align}\label{ieq: alpha3}
	&\|p_\rho - p_{\widetilde\rho}\|_{L^2(I,L^2(\Omega))}\leq T^2c(\|\nu_\rho- \nu_{\widetilde\rho}\|_{L^2(\Omega)}\|\partial_t^2\overline p\|_{L^2(I,L^\infty(\Omega))} + \|\rho^{st}-\widetilde\rho^{st}\|_{L^2(I,L^2(\Omega))})
	\\\label{ieq: alpha4}
	&\|q_\rho - q_{\widetilde\rho}\|_{L^2(I,L^2(\Omega))}
	\\\notag
	&\hspace{1.4mm}\leq T^2c\Bigg(\sum_{i=1}^m\|a_i\|_{L^\infty(I, L^\infty(\Omega))}\|p_\rho - p_{\widetilde\rho}\|_{L^2(I,L^2(\Omega))} +\|\nu_\rho- \nu_{\widetilde\rho}\|_{L^2(\Omega)}\|\partial_t^2\overline q\|_{L^2(I, L^\infty(\Omega))} + \|\rho^{adj}-\widetilde\rho^{adj}\|_{L^2(I,L^2(\Omega))}\Bigg)
	\\\notag
	&\underbrace{\leq}_{\cref{ieq: alpha3}} T^2c\Bigg(\sum_{i=1}^m\|a_i\|_{L^\infty(I, L^\infty(\Omega))}T^2c(\|\nu_\rho- \nu_{\widetilde\rho}\|_{L^2(\Omega)}\|\partial_t^2\overline p\|_{L^2(I,L^\infty(\Omega))} + \|\rho^{st}-\widetilde\rho^{st}\|_{L^2(I,L^2(\Omega))}) 
	\\
	&\notag\hspace{1.7cm} +\|\nu_\rho- \nu_{\widetilde\rho}\|_{L^2(\Omega)}\|\partial_t^2\overline q\|_{L^2(I,L^\infty(\Omega))} + \|\rho^{adj}-\widetilde\rho^{adj}\|_{L^2(I,L^2(\Omega))}\Bigg).
\end{align}
Therefore, applying \cref{ieq: alpha2}-\cref{ieq: alpha4} to \cref{ieq: alpha1} provides that
\begin{align*}
	&\alpha\|\nu_\rho - \nu_{\widetilde\rho}\|_{L^2(\Omega)}^2 
	\\
	&\leq T^2c\Bigg(\sum_{i=1}^m\|a_i\|_{L^\infty(I, L^\infty(\Omega))}T^2c\left(\|\nu_\rho- \nu_{\widetilde\rho}\|_{L^2(\Omega)}\|\partial_t^2\overline p\|_{L^2(I,L^\infty(\Omega))} + \|\rho^{st}-\widetilde\rho^{st}\|_{L^2(I,L^2(\Omega))}\right) 
	\\
	&\hspace{1.4cm} +\|\nu_\rho- \nu_{\widetilde\rho}\|_{L^2(\Omega)}\|\partial_t^2\overline q\|_{L^2(I,L^\infty(\Omega))} + \|\rho^{adj}-\widetilde\rho^{adj}\|_{L^2(I,L^2(\Omega))}\Bigg)\|\rho^{st} - \widetilde\rho^{st}\|_{L^2(I,L^2(\Omega))} 
	\\
	&\quad + \|\rho^{adj} - \widetilde\rho^{adj}\|_{L^2(I,L^2(\Omega))}T^2c(\|\nu_\rho- \nu_{\widetilde\rho}\|_{L^2(\Omega)}\|\partial_t^2\overline p\|_{L^2(I,L^\infty(\Omega))} + \|\rho^{st}-\widetilde\rho^{st}\|_{L^2(I,L^2(\Omega))})
	\\
	&\quad + \|\rho^{VI}-\widetilde\rho^{VI}\|_{L^2(\Omega)}\|\nu_\rho-\nu_{\widetilde\rho}\|_{L^2(\Omega)} + T^4c^2\sum_{i=1}^{m}\|a_i\|_{L^\infty(I, L^\infty(\Omega))}\|\rho^{st} - \widetilde\rho^{st}\|_{L^2(I, L^2(\Omega))}^2 
	\\
	&\quad+ 2\sum_{i=1}^{m}\|a_i\|_{L^\infty(I, L^\infty(\Omega))}T^4c^2\|\rho^{st} - \widetilde\rho^{st}\|_{L^2(I, L^2(\Omega))}(\|\nu_\rho- \nu_{\widetilde\rho}\|_{L^2(\Omega)}\|\partial_t^2\overline p\|_{L^2(I,L^\infty(\Omega))} + \|\rho^{st}-\widetilde\rho^{st}\|_{L^2(I,L^2(\Omega))})
	\\
	&\quad+ 2\|\nu_\rho - \nu_{\widetilde\rho}\|_{L^2(\Omega)}T^2c\|\rho^{st} - \widetilde\rho^{st}\|_{L^2(I, L^2(\Omega))}\|\partial_t^2\overline q\|_{L^2(I,L^\infty(\Omega))}
	\\
	&\leq C_1\|\nu_\rho - \nu_{\widetilde\rho}\|_{L^2(\Omega)}\|\rho^{st} - \widetilde\rho^{st}\|_{L^2(I,L^2(\Omega))} + C_2\|\rho^{st} - \widetilde\rho^{st}\|_{L^2(I,L^2(\Omega))}^2
	\\
	&\quad + C_3\|\rho^{adj} - \widetilde\rho^{adj}\|_{L^2(I,L^2(\Omega))}\|\rho^{st} - \widetilde\rho^{st}\|_{L^2(I,L^2(\Omega))} + C_4\|\nu_\rho - \nu_{\widetilde\rho}\|_{L^2(\Omega)}\|\rho^{adj} - \widetilde\rho^{adj}\|_{L^2(I,L^2(\Omega))} 
	\\
	&\quad + \|\rho^{VI}-\widetilde\rho^{VI}\|_{L^2(\Omega)}\|\nu_\rho-\nu_{\widetilde\rho}\|_{L^2(\Omega)}
\end{align*}
with the constants
\begin{align*}
	&C_1\coloneqq 3T^4c^2\sum_{i=1}^m\|a_i\|_{L^\infty(I, L^\infty(\Omega))}\|\partial_t^2\overline p\|_{L^2(I, L^\infty(\Omega))} +3T^2c\|\partial_t^2\overline q\|_{L^2(I, L^\infty(\Omega))},	
	&&C_2 \coloneqq 4T^4c^2\sum_{i=1}^m\|a_i\|_{L^\infty(I, L^\infty(\Omega))},
	\\
	&C_3\coloneqq 2T^2c,
	&&C_4\coloneqq T^2c\|\partial_t^2\overline p\|_{L^2(I,L^\infty(\Omega))}.
\end{align*}
Using Young's inequality, we obtain that
\begin{align*}
	&\alpha\|\nu_\rho - \nu_{\widetilde\rho}\|_{L^2(\Omega)}^2
	\\
	&\leq \frac{\alpha}{4}\|\nu_\rho - \nu_{\widetilde\rho}\|_{L^2(\Omega)}^2 + \left(\frac{1}{\alpha}C_1^2 \!+\! C_2\right)\|\rho^{st} - \widetilde\rho^{st}\|_{L^2(I,L^2(\Omega))}^2 +\frac{C_3}{2}\|\rho^{adj} - \widetilde\rho^{adj}\|_{L^2(I,L^2(\Omega))}^2+ \frac{C_3}{2}\|\rho^{st} - \widetilde\rho^{st}\|_{L^2(I,L^2(\Omega))}^2 
	\\
	&\quad + \frac{\alpha}{4}\|\nu_\rho - \nu_{\widetilde\rho}\|_{L^2(\Omega)}^2 + \frac{1}{\alpha}C_4^2\|\rho^{adj} - \widetilde\rho^{adj}\|_{L^2(I,L^2(\Omega))}^2 + \frac{1}{\alpha}\|\rho^{VI}-\widetilde\rho^{VI}\|_{L^2(\Omega)}^2 + \frac{\alpha}{4}\|\nu_\rho-\nu_{\widetilde\rho}\|_{L^2(\Omega)}^2,
\end{align*}
leading to 
\begin{align}\label{ieq: controlfinal}
	&\frac{\alpha}{4}\|\nu_\rho - \nu_{\widetilde\rho}\|_{L^2(\Omega)}^2
	\\\notag
	&\leq\left(\frac{1}{\alpha}C_1^2 + C_2 + \frac{C_3}{2}\right)\|\rho^{st} - \widetilde\rho^{st}\|_{L^2(I,L^2(\Omega))}^2 +\left(\frac{C_3}{2} + \frac{1}{\alpha}C_4^2\right)\|\rho^{adj} - \widetilde\rho^{adj}\|_{L^2(I,L^2(\Omega))}^2 + \frac{1}{\alpha}\|\rho^{VI}-\widetilde\rho^{VI}\|_{L^2(\Omega)}^2.
\end{align}
Therefore, \cref{Lipschitzproperties} is valid.

\subsection{Proof of \texorpdfstring{\cref{lemma: estimates}}{}}
\label{subsection proof lemma estimates}

\begin{proof}
	Let $\nu,\tilde\nu\in\mathcal V_{ad}$, $(\overline\nu,\check p, \check q)\coloneqq T_0(\overline\nu)$, $(\nu,p, q)\coloneqq T_0(\nu)$, and $(\tilde\nu,\tilde p, \tilde q)\coloneqq T_0(\tilde\nu)$. Furthermore, let $l\in\mathbb N_0$. We first note from \cref{system: statetk} that $\check p - \overline p$ solves 
	\begin{equation*}
		\left\{\begin{aligned}
			&\nu_0\partial_t^2 (\check p - \overline p) - \Delta(\check p - \overline p) + \eta\partial_t(\check p - \overline p) = -(\overline\nu - \nu_0)\partial_t^2(p_0 - \overline p)
			&&\text{in } I\times \Omega
			\\
			&\partial_n(\check p - \overline p) = 0 
			&&\text{on }I\times\Gamma_N
			\\
			&\check p - \overline p = 0 
			&&\text{on }I\times\Gamma_D	
			\\
			&(\check p - \overline p, \partial_t(\check p - \overline p))(0) = (0, 0) &&\text{in } \Omega,
		\end{aligned}\right.
	\end{equation*}
	such that \cref{lemma: abstract}, \cref{lemma: stampaccia}, and \cref{assumption4} yield that
	\begin{align}\notag
		\|\partial_t^l (\check p - \overline p)\|_{L^2(I,L^2(\Omega))}
		&\!\leq\! c\sqrt{T}\|\overline\nu - \nu_0\|_{L^2(\Omega)}\|\partial_t^{l+1}(p_0 - \overline p)\|_{L^1(I, L^\infty(\Omega))}\!\leq\! cT\|\overline\nu - \nu_0\|_{L^2(\Omega)}\|\partial_t^{l+1}(p_0 - \overline p)\|_{L^2(I, L^\infty(\Omega))}
		\\\label{ieq p checkb}
		&\leq cTC_0(l+1)! ^2\varepsilon^2\underbrace{\leq}_{\cref{assumption epsilon}} c TC_0\frac{\gamma}{4\delta} (l+1)! ^2\varepsilon
	\end{align}
	and
	\begin{align}\notag
		\|\partial_t^l (\check p - \overline p)\|_{L^2(I,L^\infty(\Omega))}&\leq \hat c(\|\overline\nu - \nu_0\|_{L^2(\Omega)}\|\partial_t^{l+2} (p_0 - \overline p)\|_{L^2(I,L^\infty(\Omega))} + \|\overline\nu - \nu_0\|_{L^2(\Omega)}\|\partial_t^{l+3} (p_0 - \overline p)\|_{L^2(I,L^\infty(\Omega))})
		\\\label{ieq p check}
		&\leq 2\hat c\frac{\gamma}{4\delta} C_0(l+3)! ^2\varepsilon.
	\end{align}
	Consequently, by the triangular inequality and \cref{assumption4} along with \cref{{ieq p check}}, we obtain that 
	\begin{equation*}
		\|\partial_t^l (\check p - p_0)\|_{L^2(I,L^\infty(\Omega))}\leq \|\partial_t^l (\check p - \overline p)\|_{L^2(I,L^\infty(\Omega))} + \|\partial_t^l (\overline p - p_0)\|_{L^2(I,L^\infty(\Omega))}\leq (\hat c\frac{\gamma}{2\delta} + 1)C_0(l+3)! ^2\varepsilon.
	\end{equation*}
	From the definition of $c_0$ (see \cref{definition delta}), this implies \cref{ieq 1}. Similarly, due to \cref{first order optimality} and \cref{system: adjointtk}, $\check q - \overline q$ satisfies 
	\begin{equation*}
		\left\{\begin{aligned}
			&\nu_0\partial_t^2 (\check q - \overline q) - \Delta(\check q - \overline q) - \eta\partial_t(\check q - \overline q) = \sum_{i=1}^ma_i(\check p - \overline p) -(\overline\nu - \nu_0)\partial_t^2(q_0 - \overline q)
			&&\text{in } I\times \Omega
			\\
			&\partial_n(\check q - \overline q) = 0 
			&&\text{on }I\times\Gamma_N
			\\
			&\check q - \overline q = 0 
			&&\text{on }I\times\Gamma_D	
			\\
			&(\check q - \overline q, \partial_t(\check q - \overline q))(T) = (0, 0) 
			&&\text{in } \Omega,
		\end{aligned}\right.
	\end{equation*}
	such that \cref{lemma: stampaccia}, \cref{assumption4}, and \cref{ieq p checkb} yield that
	\begin{align}\label{ieq q check q bar2}
		&\|\partial_t^l (\check q - \overline q)\|_{L^2(I,L^\infty(\Omega))}
		\\\notag
		&\leq \hat c(\sum_{j=0}^l\binom{l}{j}\sum_{i=1}^m\|\partial_t^ja_i\|_{L^\infty(I, L^\infty(\Omega))}\|\partial_t^{l-j}(\check p - \overline p)\|_{L^2(I,L^2(\Omega))}+ \|\overline\nu - \nu_0\|_{L^2(\Omega)}\|\partial_t^{l+2} (q_0 - \overline q)\|_{L^2(I,L^\infty(\Omega))} 
		\\\notag
		&\qquad + \sum_{j=0}^{l+1}\binom{l+1}{j}\sum_{i=1}^m\|\partial_t^ja_i\|_{L^\infty(I,L^\infty(\Omega))}\|\partial_t^{l+1-j}(\check p - \overline p)\|_{L^2(I,L^2(\Omega))} + \|\overline\nu - \nu_0\|_{L^2(\Omega)}\|\partial_t^{l+3} (q_0 - \overline q)\|_{L^2(I,L^\infty(\Omega))})
		\\\notag
		&\leq \hat c(C_acTC_0\frac{\gamma}{4\delta}\sum_{j=0}^l\binom{l}{j}j! ^2(l-j+1)! ^2\varepsilon + \frac{\gamma}{4\delta}C_0(l+2)! ^2\varepsilon
		\\\notag
		&\qquad + C_acTC_0\frac{\gamma}{4\delta}\sum_{j=0}^{l+1}\binom{l+1}{j}j! ^2(l-j+2)! ^2\varepsilon + \frac{\gamma}{4\delta}C_0(l+3)!^2\varepsilon)
		\\\notag
		&\underbrace{\leq}_{\cref{{faculties}}}\hat cC_0\frac{\gamma}{2\delta}(C_acT + 1)(l+3)! ^2\varepsilon,
	\end{align}
	where we have used that 
	\begin{equation}\label{faculties}
		\sum_{j=0}^{l+1}\binom{l+1}{j}j! ^2(l-j+2)! ^2 =\sum_{j=0}^{l+1}\frac{(l+1)!}{(l+1-j)!j!}j! ^2(l-j+2)! ^2 = \sum_{j=0}^{l+1}(l+1)!(l-j+2)j!(l-j+2)!\leq(l+3)! ^2.
	\end{equation}
	The last inequality in \cref{faculties} is simply shown by induction. Again, with the triangular inequality and \cref{assumption4} along with \cref{ieq q check q bar2}, we obtain that 
	\begin{align*}
		\|\partial_t^l (\check q - q_0)\|_{L^2(I,L^\infty(\Omega))}
		&\leq \|\partial_t^l (\check q - \overline q)\|_{L^2(I,L^\infty(\Omega))} + \|\partial_t^l (\overline q - 
		q_0)\|_{L^2(I,L^\infty(\Omega))}
		\\
		&\leq C_0(\hat c\frac{\gamma}{2\delta}(C_acT + 1) + 1)(l+3)! ^2\varepsilon \underbrace{=}_{\cref{definition delta}} c_0 (l+3)!^2 \varepsilon
	\end{align*}
	that is \cref{ieq 1b}. By \cref{system: statetk}, $p - \tilde p$ satisfies
	\begin{equation*}
		\left\{\begin{aligned}
			&\nu_0\partial_t^2(p - \tilde p) - \Delta(p - \tilde p) + \eta\partial_t(p - \tilde p) = -(\nu - \tilde\nu)\partial_t^2p_0
			&&\text{in }I\times\Omega
			\\
			&\partial_n(p - \tilde p) = 0 
			&&\text{on }I\times\Gamma_N
			\\
			&p - \tilde p = 0 
			&&\text{on }I\times\Gamma_D	
			\\
			&(p-\tilde p, \partial_t(p - \tilde p))(0) = (0, 0)
			&&\text{in }\Omega.
		\end{aligned}\right.
	\end{equation*}
	Applying \cref{lemma: abstract}, \cref{lemma: stampaccia}, and \cref{assumption4} gives that
	\begin{equation}\label{ieq p tilde p}
		\|\partial_t^l(p-\tilde p)\|_{L^2(I,L^2(\Omega))}\leq c\sqrt{T}\|\nu - \tilde\nu\|_{L^2(\Omega)}\|\partial_t^{l+1} p_0\|_{L^1(I,L^\infty(\Omega))}\leq c C_0T(l+1)! ^2{\|\nu - \tilde\nu\|_{L^2(\Omega)}}
	\end{equation}
	and
	\begin{align*}
		\|\partial_t^l(p-\tilde p)\|_{L^2(I,L^\infty(\Omega))}
		&\leq \hat c(\|\nu - \tilde\nu\|_{L^2(\Omega)}\|\partial_t^{l+2} p_0\|_{L^2(I,L^\infty(\Omega))} + \|\nu - \tilde\nu\|_{L^2(\Omega)}\|\partial_t^{l+3} p_0\|_{L^2(I,L^\infty(\Omega))})
		\\
		&\leq 2\hat c C_0(l+3)! ^2\|\nu - \tilde\nu\|_{L^2(\Omega)},
	\end{align*}
	leading to \cref{ieq 3}; see the definition of $c_1$ in \cref{definition delta}. By \cref{system: adjointtk}, the difference $q - \tilde q$ solves 
	\begin{equation*}
		\left\{\begin{aligned}
			&\nu_0\partial_t^2(q - \tilde q) - \Delta(q - \tilde q) - \eta\partial_t(q - \tilde q) = \sum_{i=1}^m a_i(p - \tilde p) -(\nu - \tilde\nu)\partial_t^2q_0
			&&\text{in }I\times\Omega
			\\
			&\partial_n(q - \tilde q) = 0 
			&&\text{on }I\times\Gamma_N
			\\
			&q - \tilde q = 0 
			&&\text{on }I\times\Gamma_D	
			\\
			&(q-\tilde q, \partial_t(q - \tilde q))(T) = (0, 0)
			&&\text{in }\Omega,
		\end{aligned}\right.
	\end{equation*}
	such that \cref{lemma: stampaccia}, \cref{assumption4}, and \cref{ieq p tilde p} provide that
	\begin{align*}
		&\|\partial_t^l(q-\tilde q)\|_{L^2(I,L^\infty(\Omega))}
		\\
		&\leq \hat c(\sum_{j=0}^l\binom{l}{j}\sum_{i=1}^m\|\partial_t^ja_i\|_{L^\infty(I, L^\infty(\Omega))}\|\partial_t^{l-j}(p - \tilde p)\|_{L^2(I,L^2(\Omega))}+ \|\nu - \tilde\nu\|_{L^2(\Omega)}\|\partial_t^{l+2} q_0\|_{L^2(I,L^\infty(\Omega))}
		\\
		&\qquad + \sum_{j=0}^{l+1}\binom{l+1}{j}\sum_{i=1}^m\|\partial_t^ja_i\|_{L^\infty(I, L^\infty(\Omega))}\|\partial_t^{l+1-j}(p - \tilde p)\|_{L^2(I,L^2(\Omega))} + \|\nu - \tilde\nu\|_{L^2(\Omega)}\|\partial_t^{l+3} q_0\|_{L^2(I,L^\infty(\Omega))})
		\\
		&\leq \hat c(C_ac C_0T\sum_{j=0}^l\binom{l}{j}j! ^2(l-j+1)! ^2\|\nu - \tilde\nu\|_{L^2(\Omega)} + \|\nu - \tilde\nu\|_{L^2(\Omega)}C_0(l+2)! ^2
		\\
		&\qquad + C_ac C_0T\sum_{j=0}^{l+1}\binom{l+1}{j}j! ^2(l-j+2)! ^2\|\nu - \tilde\nu\|_{L^2(\Omega)} + \|\nu - \tilde\nu\|_{L^2(\Omega)}C_0(l+3)! ^2)
		\\
		&\underbrace{\leq}_{\cref{faculties}} 2\hat cC_0(C_acT + 1)(l+3)! ^2\|\nu - \tilde\nu\|_{L^2(\Omega)},
	\end{align*}
	leading to \cref{ieq 3b}; see the definition of $c_1$ in \cref{definition delta}. Now, let $k\in\mathbb N$ and we redefine $(\nu,p, q)\coloneqq T_k(\nu)$ and $(\tilde\nu,\tilde p, \tilde q)\coloneqq T_k(\tilde\nu)$. Furthermore, let $\nu_k$ and $(\nu_{k-1},p_{k-1},q_{k-1})$ satisfy \cref{assumptionk} and $p_k\in X_0, q_k\in X_T$ be the unique solutions to \cref{iterationstate} and \cref{iterationadjoint}. Due to \cref{assumption4} and \cref{assumptionk}, applying \cref{lemma: stampaccia} to \cref{iterationstate} yields that $p_k\in C^\infty(I,L^\infty(\Omega))$, $\partial_t^l p_k(0) = 0$ for all $l\in\mathbb N_0$, and
	\begin{align}\label{ieq pk}
		\|\partial_t^lp_k\|_{L^2(I,L^\infty(\Omega))}
		&\leq \hat c(\|\partial_t^lf\|_{L^2(I,L^2(\Omega))} + \|\nu_k - \nu_{k-1}\|_{L^2(\Omega)}\|\partial_t^{l+2}p_{k-1}\|_{L^2(I,L^\infty(\Omega))}
		\\\notag
		&\qquad + \|\partial_t^{l+1} f\|_{L^2(I,L^2(\Omega))} + \|\nu_k - \nu_{k-1}\|_{L^2(\Omega)}\|\partial_t^{l+3} p_{k-1}\|_{L^2(I,L^\infty(\Omega))})
		\\\notag
		&\underbrace{\leq}_{\cref{assumptionk}} 2\hat cC_f(l+1)! ^2 + \hat cC_0\frac{1}{2\delta}(b_k+b_{k-1})(l+3k)! ^2
		\\\notag
		&\underbrace{\leq}_{\cref{bk overline gamma}} 2\hat cC_f(l+1)! ^2 + \hat cC_0\frac{\overline\gamma}{\delta}(l+3k)! ^2\underbrace{\leq}_{\cref{assumption c constants}} C_0(l+3k)! ^2.
	\end{align}
	From \cref{iterationstate}, we obtain that
	\begin{equation}\label{system: overline p - pk}
		\left\{\begin{aligned}
			&\nu_{k-1}\partial_t^2 (\overline p - p_k) - \Delta(\overline p - p_k) + \eta\partial_t(\overline p - p_k) = -(\overline\nu - \nu_{k-1})\partial_t^2 \overline p + (\nu_k - \nu_{k-1})\partial_t^2 p_{k-1} 
			&&\text{in } I\times \Omega
			\\
			&\partial_n(\overline p - p_k) = 0 
			&&\text{on }I\times\Gamma_N
			\\
			&\overline p - p_k = 0 
			&&\text{on }I\times\Gamma_D	
			\\
			&(\overline p - p_k, \partial_t(\overline p - p_k))(0) = (0, 0) 
			&&\text{in } \Omega.
		\end{aligned}\right.
	\end{equation}
	Thus, \cref{lemma: abstract}, \cref{lemma: stampaccia}, \cref{assumption4}, and \cref{assumptionk} yield
	\begin{align}\notag
		\|\partial_t^l(\overline p - p_k)\|_{L^2(I,L^2(\Omega))}
		&\leq c\sqrt{T}(\|\overline\nu - \nu_{k-1}\|_{L^2(\Omega)}\|\partial_t^{l+1}\overline p\|_{L^1(I,L^\infty(\Omega))} + \|\nu_k - \nu_{k-1}\|_{L^2(\Omega)}\|\partial_t^{l+1}p_{k-1}\|_{L^1(I,L^\infty(\Omega))})
		\\\label{ieq overline p - pk}
		&\leq cT(\overline C + C_0)(l+3k-2)! ^2(\|\overline\nu - \nu_k\|_{L^2(\Omega)} + \|\overline\nu - \nu_{k-1}\|_{L^2(\Omega)})
	\end{align}
	and
	\begin{align}\label{ieq overline p - pk2}
		\|\partial_t^l (\overline p - p_k)\|_{L^2(I,L^\infty(\Omega))}
		&\leq \hat c(\|(\overline\nu - \nu_{k-1})\partial_t^{l+2} \overline p\|_{L^2(I,L^2(\Omega))} + \|(\overline\nu - \nu_{k-1})\partial_t^{l+3} \overline p\|_{L^2(I,L^2(\Omega))} 
		\\\notag
		&\qquad+ \|(\nu_k - \nu_{k-1})\partial_t^{l+2} p_{k-1}\|_{L^2(I,L^2(\Omega))} + \|(\nu_k - \nu_{k-1})\partial_t^{l+3} p_{k-1}\|_{L^2(I,L^2(\Omega))})
		\\\notag
		&\leq 2\hat c(\overline C + C_0)(l+3k)! ^2(\|\overline\nu - \nu_k\|_{L^2(\Omega)} + \|\overline\nu - \nu_{k-1}\|_{L^2(\Omega)}),
	\end{align}
	leading to \cref{ieq 7}; see the definition of $c_1$ in \cref{definition delta}. Applying \cref{lemma: stampaccia} to \cref{iterationadjoint} and along with \cref{assumption4} and \cref{ieq overline p - pk}, we obtain that 
	\begin{align*}
		&\|\partial_t^lq_k\|_{L^2(I,L^\infty(\Omega))}
		\\
		&\leq \hat c(\sum_{i=1}^{m}\|\partial_t^l(a_i(p_k - p_i^{ob}))\|_{L^2(I,L^2(\Omega))} +\|\nu_k - \nu_{k-1}\|_{L^2(\Omega)}\|\partial_t^{l+2} q_{k-1}\|_{L^2(I,L^\infty(\Omega))}
		\\
		&\hspace{.7cm} +\sum_{i=1}^{m}\|\partial_t^{l+1}(a_i(p_k - p_i^{ob}))\|_{L^2(I,L^2(\Omega))}+ \|\nu_k - \nu_{k-1}\|_{L^2(\Omega)}\|\partial_t^{l+3} q_{k-1}\|_{L^2(I,L^\infty(\Omega))})
		\\
		&\leq \hat c(\sum_{i=1}^{m}\|\partial_t^l(a_i(\overline p - p_i^{ob}))\|_{L^2(I,L^2(\Omega))} + \sum_{i=1}^{m}\|\partial_t^{l+1}(a_i(\overline p - p_i^{ob}))\|_{L^2(I,L^2(\Omega))}
		\\
		&\hspace{.7cm} + \sum_{i=1}^{m}\|\partial_t^l(a_i(p_k - \overline p))\|_{L^2(I,L^2(\Omega))} +\|\nu_k - \nu_{k-1}\|_{L^2(\Omega)}\|\partial_t^{l+2} q_{k-1}\|_{L^2(I,L^\infty(\Omega))}
		\\
		&\hspace{.7cm} +\sum_{i=1}^{m}\|\partial_t^{l+1}(a_i(p_k - \overline p))\|_{L^2(I,L^2(\Omega))}+ \|\nu_k - \nu_{k-1}\|_{L^2(\Omega)}\|\partial_t^{l+3} q_{k-1}\|_{L^2(I,L^\infty(\Omega))})
		\\
		&\leq \hat c(\sum_{i=1}^{m}\|\partial_t^l(a_i(\overline p - p_i^{ob}))\|_{L^2(I,L^2(\Omega))} + \sum_{i=1}^{m}\|\partial_t^{l+1}(a_i(\overline p - p_i^{ob}))\|_{L^2(I,L^2(\Omega))}
		\\
		&\hspace{.7cm} + \sum_{i=1}^{m}\sum_{j=0}^l\binom{l}{j}\|\partial_t^ja_i\|_{L^\infty(I, L^\infty(\Omega))}\|\partial_t^{l-j}(p_k - \overline p)\|_{L^2(I,L^2(\Omega))} +\|\nu_k - \nu_{k-1}\|_{L^2(\Omega)}\|\partial_t^{l+2} q_{k-1}\|_{L^2(I,L^\infty(\Omega))}
		\\
		&\hspace{.7cm} +\sum_{i=1}^{m}\sum_{j=0}^{l+1}\binom{l+1}{j}\|\partial_t^ja_i\|_{L^\infty(I, L^\infty(\Omega))}\|\partial_t^{l+1-j}(p_k - \overline p)\|_{L^2(I,L^2(\Omega))} + \|\nu_k - \nu_{k-1}\|_{L^2(\Omega)}\|\partial_t^{l+3} q_{k-1}\|_{L^2(I,L^\infty(\Omega))})
		\\
		&\leq \hat c(\sum_{i=1}^{m}\|\partial_t^l(a_i(\overline p - p_i^{ob}))\|_{L^2(I,L^2(\Omega))} + \sum_{i=1}^{m}\|\partial_t^{l+1}(a_i(\overline p - p_i^{ob}))\|_{L^2(I,L^2(\Omega))}
		\\
		&\hspace{.7cm} + C_acT(\overline C + C_0)\sum_{j=0}^l\binom{l}{j}j! ^2(l-j+3k-2)! ^2(\|\overline\nu - \nu_k\|_{L^2(\Omega)} + \|\overline\nu - \nu_{k-1}\|_{L^2(\Omega)})
		\\
		&\hspace{.7cm} +\|\nu_k - \nu_{k-1}\|_{L^2(\Omega)}\|\partial_t^{l+2} q_{k-1}\|_{L^2(I,L^\infty(\Omega))}
		\\
		&\hspace{.7cm} + C_acT(\overline C + C_0)\sum_{j=0}^{l+1}\binom{l+1}{j}j! ^2(l-j+3k-1)! ^2(\|\overline\nu - \nu_k\|_{L^2(\Omega)} + \|\overline\nu - \nu_{k-1}\|_{L^2(\Omega)})
		\\
		&\hspace{.7cm} + \|\nu_k - \nu_{k-1}\|_{L^2(\Omega)}\|\partial_t^{l+3} q_{k-1}\|_{L^2(I,L^\infty(\Omega))})
		\\
		&\underbrace{\leq}_{\cref{assumptionk}}\hat c(2C_a(l+1)! ^2 + C_acT(\overline C + C_0)\frac{1}{2\delta}(b_k+b_{k-1})(l+3k)! ^2 + C_0\frac{1}{2\delta}(b_k+b_{k-1})(l+3k)! ^2)
		\\
		&\underbrace{\leq}_{\cref{bk overline gamma}}\hat c(2C_a(l+1)! ^2 + C_acT(\overline C + C_0)\frac{\overline\gamma}{\delta}(l+3k)! ^2 + C_0\frac{\overline\gamma}{\delta}(l+3k)! ^2)\underbrace{\leq}_{\cref{assumption c constants}} C_0(l+3k)! ^2.
	\end{align*}
	Along with \cref{ieq pk}, the above estimate implies that \cref{pk claim} is valid. Now, \cref{ieq 5} and \cref{ieq 5b} follow analog to \cref{ieq 3} and \cref{ieq 3b}. By \cref{first order optimality} and \cref{iterationadjoint}, we obtain that
	\begin{equation*}
		\left\{\begin{aligned}
			&\nu_{k-1}\partial_t^2 (\overline q - q_k) - \Delta(\overline q - q_k) - \eta\partial_t(\overline q - q_k)=\sum_{i=1}^m a_i(\overline p - p_k) -(\overline\nu - \nu_{k-1})\partial_t^2 \overline q + (\nu_k - \nu_{k-1})\partial_t^2 q_{k-1}
			&&\text{in } I\times \Omega
			\\
			&\partial_n(\overline q - q_k) = 0 
			&&\text{on }I\times\Gamma_N
			\\
			&\overline q - q_k = 0 
			&&\text{on }I\times\Gamma_D	
			\\
			&(\overline q - q_k, \partial_t(\overline q - q_k))(T) = (0, 0) &&\text{in } \Omega,
		\end{aligned}\right.
	\end{equation*}
	such that \cref{lemma: stampaccia}, \cref{assumption4}, \cref{assumptionk}, and \cref{ieq overline p - pk} provide
	\begin{align*}
		&\|\partial_t^l (\overline q - q_k)\|_{L^2(I,L^\infty(\Omega))}
		\\
		&\leq \hat c(\sum_{j=0}^l\binom{l}{j}\sum_{i=1}^m\|\partial_t^{j}a_i\|_{L^\infty(I, L^\infty(\Omega))}\|\partial_t^{l-j}(\overline p - p_k)\|_{L^2(I,L^2(\Omega))}+ \|(\overline\nu - \nu_{k-1})\partial_t^{l+2} \overline q\|_{L^2(I,L^2(\Omega))}
		\\
		&\hspace{.7cm}+ \|(\nu_k - \nu_{k-1})\partial_t^{l+2} q_{k-1}\|_{L^2(I,L^2(\Omega))}+ \sum_{j=0}^{l+1}\binom{l+1}{j}\sum_{i=1}^m\|\partial_t^{j}a_i\|_{L^\infty(I, L^\infty(\Omega))}\|\partial_t^{l+1-j}(\overline p - p_k)\|_{L^2(I,L^2(\Omega))}
		\\
		&\hspace{.7cm} + \|(\overline\nu - \nu_{k-1})\partial_t^{l+3} \overline q\|_{L^2(I,L^2(\Omega))} + \|(\nu_k - \nu_{k-1})\partial_t^{l+3} q_{k-1}\|_{L^2(I,L^2(\Omega))})
		\\
		&\leq \hat c(C_acT(\overline C + C_0)\sum_{j=0}^l\binom{l}{j}j! ^2(l+3k-j-2)! ^2+ \overline C(l+2)! ^2 + C_0(l+3k-1)! ^2
		\\
		&\hspace{.7cm}+ C_acT(\overline C + C_0)\sum_{j=0}^{l+1}\binom{l+1}{j}j! ^2(l+3k - j - 1)! ^2+ \overline C(l+3)! ^2
		\\
		&\qquad+ C_0(l+3k)! ^2)(\|\overline\nu - \nu_k\|_{L^2(\Omega)} + \|\overline\nu - \nu_{k-1}\|_{L^2(\Omega)})
		\\
		&\leq \hat c(2C_acT(\overline C + C_0) + 2\overline C + 2C_0)(l+3k)! ^2(\|\overline\nu - \nu_k\|_{L^2(\Omega)} + \|\overline\nu - \nu_{k-1}\|_{L^2(\Omega)}),
	\end{align*}
	leading to \cref{ieq 8}; see the definition of $c_1$ in \cref{definition delta}. To prove \cref{ieq 9}, due to \cref{system: statetk} and \cref{iterationstate}, note that $p - p_k$ solves
	\begin{equation}\label{system: p - p_k}
		\left\{\begin{aligned}
			&\nu_k\partial_t^2 (p - p_k) - \Delta(p - p_k) + \eta\partial_t(p - p_k) = - (\nu - \nu_{k-1})\partial_t^2 p_k + (\nu_k - \nu_{k-1})\partial_t^2 p_{k-1}
			&&\text{in } I\times \Omega
			\\
			&\partial_n(p - p_k) = 0 
			&&\text{on }I\times\Gamma_N
			\\
			&p - p_k = 0 
			&&\text{on }I\times\Gamma_D	
			\\
			&(p - p_k, \partial_t (p - p_k))(0) = (0, 0) 
			&&\text{in } \Omega,
		\end{aligned}\right.
	\end{equation}
	such that from \cref{lemma: abstract}, \cref{lemma: stampaccia}, \cref{pk claim}, and \cref{assumptionk}, it follows that
	\begin{align}\notag
		\|\partial_t^l(p - p_k)\|_{L^2(I,L^2(\Omega))}
		&\leq c\sqrt{T}(\|\nu - \nu_{k-1}\|_{L^2(\Omega)}\|\partial_t^{l+1}p_k\|_{L^1(I,L^\infty(\Omega))} + \|\nu_k - \nu_{k-1}\|_{L^2(\Omega)}\|\partial_t^{l+1}p_{k-1}\|_{L^1(I,L^\infty(\Omega))})
		\\\label{ieq p - pk3}
		&\leq cTC_0(l+3k+1)! ^2(\|\nu - \nu_k\|_{L^2(\Omega)} + \|\nu - \nu_{k-1}\|_{L^2(\Omega)})
	\end{align}
	and
	\begin{align}\notag
		\|\partial_t^l(p - p_k)\|_{L^2(I,L^\infty(\Omega))}
		&\leq \hat c(\|\nu - \nu_{k-1}\|_{L^2(\Omega)}\|\partial_t^{l+2}p_k\|_{L^2(I,L^\infty(\Omega))} + \|\nu_k - \nu_{k-1}\|_{L^2(\Omega)}\|\partial_t^{l+2}p_{k-1}\|_{L^2(I,L^\infty(\Omega))}
		\\\notag
		&\qquad + \|\nu - \nu_{k-1}\|_{L^2(\Omega)}\|\partial_t^{l+3} p_k\|_{L^2(I,L^\infty(\Omega))} + \|\nu_k - \nu_{k-1}\|_{L^2(\Omega)}\|\partial_t^{l+3} p_{k-1}\|_{L^2(I,L^\infty(\Omega))})
		\\\label{ieq p - pk2}
		&\leq 4\hat cC_0(l+3k+3)! ^2(\|\nu - \nu_k\|_{L^2(\Omega)} + \|\nu - \nu_{k-1}\|_{L^2(\Omega)}),
	\end{align}
	leading to \cref{ieq 9}; see the definition of $c_1$ in \cref{definition delta}. Finally, due to \cref{system: adjointtk} and \cref{iterationadjoint}, it holds that
	\begin{equation*}
		\left\{\begin{aligned}
			&\nu_k\partial_t^2 (q - q_k) - \Delta(q - q_k) - \eta\partial_t(q - q_k) = \sum_{i=1}^m a_i(p - p_k) - (\nu - \nu_{k-1})\partial_t^2 q_k + (\nu_k - \nu_{k-1})\partial_t^2 q_{k-1}
			&&\text{in } I\times \Omega
			\\
			&\partial_n(q - q_k) = 0 
			&&\text{on }I\times\Gamma_N
			\\
			&q - q_k = 0 
			&&\text{on }I\times\Gamma_D	
			\\
			&(q - q_k, \partial_t (q - q_k))(T) = (0, 0) 
			&&\text{in } \Omega,
		\end{aligned}\right.
	\end{equation*}
	such that \cref{lemma: stampaccia}, \cref{assumption4}, \cref{assumptionk}, \cref{pk claim}, and \cref{ieq p - pk3} yield that
	\begin{align*}
		&\|\partial_t^l(q - q_k)\|_{L^2(I,L^\infty(\Omega))}
		\\
		&\leq \hat c(\sum_{j=0}^l\binom{l}{j}\sum_{i=1}^{m}\|\partial_t^ja_i\|_{L^\infty(I, L^\infty(\Omega))}\|\partial_t^{l-j}(p - p_k)\|_{L^2(I,L^2(\Omega))} + \|\nu - \nu_{k-1}\|_{L^2(\Omega)}\|\partial_t^{l+2}q_k\|_{L^2(I,L^\infty(\Omega))}
		\\
		&\hspace{.7cm} + \|\nu_k - \nu_{k-1}\|_{L^2(\Omega)}\|\partial_t^{l+2}q_{k-1}\|_{L^2(I,L^\infty(\Omega))} + \sum_{j=0}^{l+1}\binom{l+1}{j}\sum_{i=1}^{m}\|\partial_t^ja_i\|_{L^\infty(I, L^\infty(\Omega))}\|\partial_t^{l+1-j}(p - p_k)\|_{L^2(I,L^2(\Omega))} 
		\\
		&\hspace{.7cm} + \|\nu - \nu_{k-1}\|_{L^2(\Omega)}\|\partial_t^{l+3} q_k\|_{L^2(I,L^\infty(\Omega))} + \|\nu_k - \nu_{k-1}\|_{L^2(\Omega)}\|\partial_t^{l+3} q_{k-1}\|_{L^2(I,L^\infty(\Omega))})
		\\
		&\leq \hat c(C_acTC_0\sum_{j=0}^l\binom{l}{j}j! ^2(l-j+3k+1)! ^2 + 4C_0(l+3k+3)! ^2
		\\
		&\qquad+ C_acTC_0\sum_{j=0}^{l+1}\binom{l+1}{j}j! ^2(l-j+3k+2)! ^2)(\|\nu - \nu_k\|_{L^2(\Omega)} + \|\nu - \nu_{k-1}\|_{L^2(\Omega)})
		\\
		&\leq \hat c(2C_acTC_0 + 4C_0)(l+3k+3)! ^2(\|\nu - \nu_k\|_{L^2(\Omega)} + \|\nu - \nu_{k-1}\|_{L^2(\Omega)}),
	\end{align*}
	leading to \cref{ieq 10}; see the definition of $c_1$ in \cref{definition delta}.
\end{proof}

\bibliographystyle{abbrvurl}
\footnotesize

\bibliography{references}

\end{document}